\def\O#1{\text{\ding{\the\numexpr#1+171}}}
\newcommand{\R}{\mathbb R}
\DeclareMathOperator{\prox}{prox}
\DeclareMathOperator{\proj}{proj}
\DeclareMathOperator{\dist}{dist}
\DeclareMathOperator*{\argmin}{argmin}
\DeclareMathOperator*{\argmax}{argmax}
\def\O{{\mathbb{O}}}
\def\R{{\mathbb{R}}}
\def\cR{{\cal \cR}}
\def\1{{\mathbf 1}}
\DeclareMathOperator{\st}{s.t.}
\declaretheoremstyle[parent=section]{definitionwithend}
\declaretheorem[style=definitionwithend]{corollary}
\declaretheorem[style=definitionwithend]{theorem}
\declaretheorem[style=definitionwithend]{proposition}
\declaretheorem[style=definitionwithend]{definition}
\declaretheorem[style=definitionwithend]{assumption}
\declaretheorem[style=definitionwithend]{example}
\declaretheorem[style=definitionwithend]{remark}
\declaretheorem[style=definitionwithend]{lemma}
\declaretheorem[style=definitionwithend]{fact} 
\def\keywords{\vspace{1.5em}
{\textbf{Keywords}:\,\relax%
}}
\title{ 
Nonsmooth Nonconvex-Nonconcave Minimax Optimization: Primal-Dual 
Balancing and Iteration Complexity Analysis\thanks{J. Li is supported by the University of British Columbia Start-Up Grant GR032687. A. M.-C. So is supported in part by the Hong Kong Research Grants Council (RGC) General Research Fund (GRF) project CUHK 14204823.}
}
\date{\today}
\author{%
    Jiajin Li\thanks{Sauder School of Business, University of British Columbia, Vancouver, BC, Canada. \texttt{jiajin.li@sauder.ubc.ca}} \and
    Linglingzhi Zhu\thanks{H. Milton Stewart School of Industrial and Systems Engineering, Georgia Institute of Technology, Atlanta, Georgia, USA. \texttt{llzzhu@gatech.edu}} \and
    Anthony Man-Cho So\thanks{Department of Systems Engineering and Engineering Management, The Chinese University of Hong Kong, Shatin, NT, Hong Kong. \texttt{manchoso@se.cuhk.edu.hk}}
}
\begin{document}
\maketitle

\begin{abstract}
Nonconvex-nonconcave minimax optimization has gained widespread interest over the last decade. However, most existing works focus on variants of gradient descent-ascent (GDA) algorithms, which are only applicable to smooth nonconvex-concave settings. 
To address this limitation, we propose a novel algorithm named smoothed proximal linear descent-ascent (smoothed PLDA), which can effectively handle a broad range of structured nonsmooth nonconvex-nonconcave minimax problems. 
Specifically, we consider the setting where the primal function has a nonsmooth composite structure and the dual problem possesses the Kurdyka-\L{}ojasiewicz (K\L{}) property with exponent $\theta \in [0,1)$. We introduce a novel convergence analysis framework for smoothed PLDA, the key components of which are our newly developed nonsmooth primal error bound and dual error bound. Using this framework,  we show that smoothed PLDA can find both $\epsilon$-game-stationary points and $\epsilon$-optimization-stationary points of the problems of interest in $\mathcal{O}(\epsilon^{-2\max\{2\theta,1\}})$ iterations.
Furthermore, when $\theta \in [0,\frac{1}{2}]$, smoothed PLDA achieves the optimal iteration complexity of $\mathcal{O}(\epsilon^{-2})$.
To further demonstrate the effectiveness and wide applicability of our analysis framework, we show that certain max-structured problem possesses the K\L{} property with exponent $\theta=0$ under mild assumptions. As a by-product, we establish algorithm-independent quantitative relationships among various stationarity concepts, which may be of independent interest.

\end{abstract}

\keywords{Nonconvex-Nonconcave Minimax Optimization,  Proximal Linear Scheme,   Nonsmooth Composite Structure, Perturbation Analysis, Kurdyka-\L{}ojasiewicz Property}

\section{Introduction}
In this paper, we aim to solve the following minimax problem:
\begin{equation}
\label{eq:problem}
\min_{x\in \mathcal{X}} \max_{y\in\mathcal{Y}} F(x,y).
\tag{P}
\end{equation}
Here, $F:\R^n \times \R^d \rightarrow \R$ can be both nonconvex with respect to $x$ and nonconcave with respect to $y$, $\mathcal{X}\subseteq \R^n$ is a nonempty closed convex set, and $\mathcal{Y}\subseteq \R^d$ is a nonempty compact convex set. This problem has a wide range of applications in machine learning and operations research. Examples include learning with non-decomposable loss~\citep{shalev2016minimizing,zhang2018ell_1}, training generative adversarial networks~\citep{arjovsky2017wasserstein,goodfellow2020generative}, adversarial training~\citep{madry2017towards,sinha2017certifying}, and (distributionally) robust optimization (DRO)~\citep{bertsimas2011theory,rahimian2019distributionally}.

When $F$ is a smooth function, a natural and intuitive approach for solving \eqref{eq:problem} is gradient descent-ascent (GDA), which involves performing gradient descent on the primal variable $x$ and gradient ascent on the dual variable $y$ in each iteration. 
For nonconvex-strongly concave problems, GDA can find an $\epsilon$-stationary point with an iteration complexity of ${\mathcal{O}}(\epsilon^{-2})$~\citep{lin2020gradient}. 
This matches the lower bound for solving \eqref{eq:problem} using a first-order oracle, as shown in \citep{carmon2020lower,li2021complexity,zhang2021complexity}.
However, when $F(x,\cdot)$ is not strongly concave for some $x\in\mathcal{X}$, GDA may encounter oscillations, even for bilinear problems. To overcome this challenge, various techniques utilizing diminishing step sizes have been proposed to guarantee convergence. Nevertheless, these techniques can only achieve a suboptimal complexity of ${\mathcal{O}}(\epsilon^{-6})$ at best~\citep{jin2020local,lin2020gradient,lu2020hybrid}. 
To achieve a lower iteration complexity, the works \citep{xu2020unified,zhang2020single} introduce a smoothing technique to stabilize the iterates. As a result, the complexity is improved to ${\mathcal{O}}(\epsilon^{-4})$ for general nonconvex-concave problems. 
Moreover, the works \citep{yang2022faster,zhang2020single} achieve the ``optimal'' iteration complexity of $\mathcal{O}(\epsilon^{-2})$ for dual functions that either take a pointwise-maximum form or possess the Polyak-\L{}ojasiewicz (P{\L}) property~\citep{polyak1963gradient}.\footnote{As far as we know, 
the lowest complexity required for finding approximate stationary points of nonconvex-P\L{} minimax/pointwise maximum problems remains an open question. Nevertheless, as we mentioned, for smooth nonconvex-strongly concave problems, the $\mathcal{O}(\epsilon^{-2})$ complexity is already optimal.}
On another front, multi-loop-type algorithms have been developed to achieve a lower iteration complexity for general nonconvex-concave problems \citep{nouiehed2019solving,thekumparampil2019efficient,yang2020catalyst,ostrovskii2021efficient}. Among these, the two triple-loop algorithms in \citep{lin2020near,ostrovskii2021efficient} have the lowest iteration complexity of $\mathcal{O}(\epsilon^{-2.5})$.

If we take one step into the nonsmooth world with \eqref{eq:problem} possessing a separable nonsmooth structure, i.e., its objective function consists of a smooth term and a separable nonsmooth term whose proximal mapping can be readily computed, then the analytic and algorithmic framework from the purely smooth case can be easily adapted.
Specifically, a class of (accelerated) proximal-GDA type algorithms have been proposed, 
where the gradient step is substituted by the proximal gradient step \citep{barazandeh2020solving,boct2020alternating,chen2021proximal,huang2021efficient}. 
By utilizing the gradient Lipschitz continuity condition of the smooth term, these algorithms can be shown to achieve the same iteration complexity as those for the smooth case.

As we can see, most existing algorithms can only handle the \textit{almost} smooth case, which refers to scenarios where at least some gradient information is obtainable, such as in purely smooth or separable nonsmooth problems. By contrast, the \textit{general} nonsmooth problem has received relatively little attention in the minimax literature.
Only recently have two algorithms, namely the proximally guided stochastic subgradient method~\citep{rafique2022weakly} and two-timescale GDA~\citep{lin2022nonasymptotic}, been proposed to tackle general nonsmooth weakly convex-concave problems. Unfortunately, these methods suffer from the high iteration complexities of $\mathcal{O}(\epsilon^{-6})$ and $\mathcal{O}(\epsilon^{-8})$, respectively, 
since they only use subgradient information and neglect problem-specific structures. 
Thus, a natural question arises: 
{\bf 
Q1: Are there structured nonsmooth nonconvex-concave problems whose $\epsilon$-stationary points can be found with an iteration complexity of $\mathcal{O}(\epsilon^{-4})$, just like smooth nonconvex-concave problems?}
Taking a step further, we would like to pose a more challenging and intriguing question:
{\bf Q2: Can we identify general regularity conditions to achieve the “optimal” rate of $\mathcal{O}(\epsilon^{-2})$ for nonsmooth nonconvex-nonconcave problems?
}
\subsection{Main Contributions}
This paper provides affirmative answers to  both questions \textbf{Q1} and \textbf{Q2} for a particular class of nonsmooth nonconvex-nonconcave problems. We focus on a primal function that has the composite structure $F(\cdot, y) := h_y \circ c_y$ for each $y\in\mathcal{Y}$, where $h_y$ is convex, Lipschitz continuous, and possibly nonsmooth; $c_y$ is continuously differentiable with a Lipschitz continuous Jacobian mapping; and the dual function is continuously differentiable and gradient Lipschitz continuous on $\mathcal{X} \times \mathcal{Y}$. Additionally, we assume that either the dual function is concave or the dual problem possesses the K\L{} property with exponent $\theta \in [0, 1)$. As concavity alone cannot guarantee the K\L{} property~\citep{bolte2010characterizations}, we deal with the concave case separately.

To start, we introduce a new algorithm called smoothed proximal linear descent-ascent (smoothed PLDA), which builds on the smoothed GDA algorithm~\citep{zhang2020single}. Unlike its predecessor, smoothed PLDA is extended to handle nonsmooth composite nonconvex-nonconcave problems. To achieve this, we leverage the proximal linear scheme to effectively manage the nonsmooth composite structure of the primal function. This scheme has been extensively studied in recent literature~\citep{nesterov2007modified,cartis2011evaluation,hu2016convergence,lewis2016proximal,drusvyatskiy2018error,drusvyatskiy2019efficiency,hu2023linearized}, though its application to minimax optimization has not been explored.

Although the algorithmic extension may seem intuitive and straightforward, it introduces notable challenges in the convergence analysis. These challenges arise primarily from the lack of gradient Lipschitz continuity of the primal function and the nonconcavity of the dual function. To address these challenges, we first establish a tight Lipschitz-type primal error bound property of the proximal linear scheme, which holds even without the gradient Lipschitz continuity condition (see Proposition \ref{prop:lip}). This error bound is new and plays a crucial role in demonstrating the sufficient decrease property of the designed Lyapunov function. The sufficient decrease property serves as a starting point to ensure the global convergence of smoothed PLDA. 

Next, we observe that the nonconcavity of the dual function poses a fundamental challenge in achieving a favorable tradeoff between the decrease in the primal and the increase in the dual. This challenge arises from the absence of inherent dominance between the primal and dual functions, making it difficult to find an optimal balance between the primal and dual updates.
As it turns out, the primal-dual tradeoff directly impacts the convergence rate. 
To quantify this tradeoff, we introduce a new dual error bound property based on the K\L{} exponent of the dual problem (see Proposition \ref{prop:dual_eb_KL}). 
This is a notable departure from the usual approach of utilizing the K\L{} exponent in pure primal nonconvex optimization~\citep{li2018calculus, attouch2013convergence}.

\begin{table*}[]
  \caption{Comparison of the iteration complexities of smoothed PLDA and other related methods under different settings for solving $\min_{x\in \mathcal{X}}\max_{y \in \mathcal{Y}} F(x,y)$. }
 \label{tab:summary}
 \centering
\begin{tikzpicture}
 \node[drop shadow,fill=white,inner sep=0pt] 
 {
 \resizebox{0.98\textwidth}{!}{ 
\begin{tabular}{|c|c|c|c|c|}
 \hline
 & \textbf{Primal Func.} & \textbf{Dual Func./Prob.} & \textbf{Iter. Compl.} & \textbf{Add. Asm.} \\ \hline
 Two-timescale GDA~\citep{lin2020gradient}  &    $L$-smooth                                  & concave  & $\mathcal{O}(\epsilon^{-6})$ & $\mathcal{X}=\R^n$               \\ \hline
Smoothed GDA~\citep{zhang2020single} & $L$-smooth & concave          & $\mathcal{O}(\epsilon^{-4})$ &    ---     \\ \hline
PG-SMD~\citep{rafique2022weakly} & weakly convex & concave          & $\mathcal{O}(\epsilon^{-6})$ &    $\mathcal{X}$ bounded        \\ \hline
 \rowcolor[HTML]{ECF4FF} 
Smoothed PLDA &      nonsmooth composite      & concave & $\mathcal{O}(\epsilon^{-4})$  & ---\\ \hline
GDA~\citep{lin2020gradient} &    $L$-smooth                                  & strongly concave  & $\mathcal{O}(\epsilon^{-2})$ & $\mathcal{X}=\R^n$               \\ \hline
Smoothed GDA~\citep{yang2022faster} & $L$-smooth & P\L{}      & $\mathcal{O}(\epsilon^{-2})$ &    $\mathcal{Y} = \R^d$   \\ \hline
 \rowcolor[HTML]{ECF4FF} 
Smoothed PLDA &      nonsmooth composite      &  K\L{} with exponent $\theta$   & $\mathcal{O}(\epsilon^{-2\max\{2\theta,1\}})$  &$\mathcal{X}$ bounded \\ \hline
\end{tabular}}
};

\end{tikzpicture}
\vspace{-4mm}
\end{table*}

The aforementioned primal and dual error bounds lie at the core of our convergence analysis framework. Our main result is that, when the dual problem possesses the K\L{} property with exponent $\theta \in [0,1)$, smoothed PLDA can find both $\epsilon$-game-stationary points and $\epsilon$-optimization-stationary points of the nonsmooth composite nonconvex-nonconcave problems introduced earlier in $\mathcal{O}(\epsilon^{-2\max\{2\theta,1\}})$ iterations. For concave functions, smoothed PLDA attains the same iteration complexity of $\mathcal{O}(\epsilon^{-4})$ as the smoothed GDA in~\citep{zhang2020single}. Therefore, we address \textbf{Q1} by extending smoothed GDA to a nonsmooth nonconvex-nonconcave setting while preserving at least the same iteration complexity. Table \ref{tab:summary} presents a summary of the iteration complexities of smoothed PLDA and other related methods in various scenarios. Using our analysis framework, we further show that when the K\L{} exponent of the dual problem lies between 0 and $\frac{1}{2}$, smoothed PLDA achieves the ``optimal'' iteration complexity of $\mathcal{O}(\epsilon^{-2})$, thus addressing \textbf{Q2}.

Interestingly, our analysis framework also reveals a phase transition phenomenon: The iteration complexity depends on the slower of the primal and dual variable updates, which can be characterized by the dual error bound. Specifically, when $\theta \in [0,\frac{1}{2}]$, the dual update is faster than the primal update, which causes the primal update to dominate the optimization process. However, since the primal update involves solving a strongly convex problem, we cannot achieve an iteration complexity better than $\mathcal{O}(\epsilon^{-2})$, which is optimal for nonconvex-strongly concave problems. By contrast, when $\theta \in (\frac{1}{2},1)$, the dual update dominates, and the iteration complexity explicitly depends on the K\L{} exponent $\theta$, i.e.,  $\mathcal{O}(\epsilon^{-4\theta})$. 

To further demonstrate the wide applicability and effectiveness of our analysis framework, we apply it to problems with a linear dual function and polytopal constraints on $y$. 
An important example is the max-structured problem, which takes the form
\begin{equation}
\label{eq:max_structure}
\min_{x\in\mathcal{X}} \max_{i\in[d]} G_i(x). 
\end{equation}
Here, $G_i: \mathbb{R}^n \rightarrow \mathbb{R}$, where $i \in [d]$, is a nonconvex function with the composite structure mentioned earlier. Such a problem
arises frequently in machine learning applications, including distributionally robust optimization (DRO) \citep{blanchet2019multivariate,shafieezadeh2019regularization,gao2022wasserstein}, adversarial training \citep{madry2017towards}, fairness training \citep{mohri2019agnostic,nouiehed2019solving}, and distribution-agnostic meta-learning \citep{collins2020distribution}.
Under some regularity conditions, we show that problem \eqref{eq:max_structure} possesses the  K\L{} property with exponent $\theta=0$.

Finally, the new dual error bound enables us to establish algorithm-independent quantitative relationships among different stationarity concepts (see Theorem \ref{thm-stationary}). The relationships are obtained as a by-product of our analysis and extend the scope of previous results in \citep{jin2020local,razaviyayn2020nonconvex,daskalakis2021complexity} to a wider range of settings. Furthermore, they hold great promise for demystifying various notions of stationarity in the context of minimax optimization.

\paragraph{Structure of the paper} The paper is organized as follows. In Section \ref{sec:motivation}, we provide several representative applications to demonstrate not only the prevalence of problem \eqref{eq:problem} when the primal function $F$ has the composite structure mentioned earlier but also the versatility of our proposed smoothed PLDA. In Section \ref{sec:pre}, we introduce the problem setup and key concepts used throughout the paper. 
We then present our proposed algorithm and a new convergence analysis framework for studying it in Sections \ref{sec:main} and \ref{sec:analysis}, respectively.
In Section \ref{sec:verify}, we verify that the max-structured problem \eqref{eq:max_structure} possesses the K\L{} property with exponent $\theta=0$.
In Section \ref{sec:sta}, we clarify the relationships among various stationarity concepts, both conceptually and quantitatively. Finally, we end with some closing remarks in Section \ref{sec:conclu}.

\section{Motivating Applications}
\label{sec:motivation}
One important class of problems that is mostly beyond the reach of existing algorithmic approaches but can be tackled by our proposed smoothed PLDA is ``two-layer'' nonsmooth composite optimization, which takes the form
\begin{equation}
\min_{x\in \mathcal{X}} f(h(c(x))).
\label{eq:one_non_s}
\end{equation}
Here, $f:\R^d \rightarrow \R$ is convex, $h:\R^m\rightarrow \R^d$ is convex and Lipschitz continuous, and $c:\R^n\rightarrow \R^m$ is continuously differentiable with a Lipschitz continuous Jacobian mapping. Note that both $f$ and $h$ can be nonsmooth. Now, observe that we can isolate the first-layer nonsmoothness by reformulating \eqref{eq:one_non_s} as the minimax problem
\begin{equation}
\min_{x\in \mathcal{X}} \max_{y\in \R^d} y^\top h(c(x))-f^\star(y),
\label{eq:two_non_s}
\end{equation}
where $f^\star: \R^d \rightarrow \R \cup \{+\infty\}$ is the conjugate function of $f$. Consequently, we can apply our proposed smoothed PLDA to solve \eqref{eq:two_non_s}. 

Various applications admit formulations that feature the nonsmooth composite structure in \eqref{eq:one_non_s}. Let us discuss two representative ones here.

\vspace{1ex}
{\rm (i)} \textbf{Variation-Regularized Wasserstein DRO~\citep{gao2022wasserstein}} 
\vspace{1ex}

The DRO methodology aims to find optimal decisions under the most adverse distribution in an ambiguity set, which consists of all probability distributions that fit the observed data with high confidence. In recent years, several works have interpreted regularization from a DRO perspective; see, e.g.,~\citep{namkoong2016stochastic,blanchet2019robust,shafieezadeh2019regularization,cranko2021generalised,rahimian2019distributionally,gao2022wasserstein}. These works provide a probabilistic justification for existing regularization techniques and offer an alternative approach to tackle risk minimization problems. 
For instance, the work \cite{gao2022wasserstein} establishes the asymptotic equivalence between Wasserstein distance-induced DRO and the following variation-regularized problem:
    \begin{equation}\label{eq:regu}
    \min_{\theta \in \Theta} \, \frac{1}{N}\sum_{i=1}^N  \ell(f_\theta(x_i),y_i)  +\rho \max_{i \in [N]} \|\nabla_x f_\theta(x_i)\|_q.
\end{equation}
Here, $\{(x_i,y_i)\}_{i\in [N]}$ are the feature-label pairs, $\ell:\R\rightarrow \R$ is the loss function, $f_\theta$ is the feature mapping (e.g., a neural network parameterized by $\theta$), $\rho>0$ is the constraint radius, and $q\in[1,+\infty]$ is a parameter. It can be shown that problem~\eqref{eq:regu} takes the form in~\eqref{eq:one_non_s}, where the assumptions on $f,h,c$ are satisfied when, e.g., $\ell$ is convex and Lipschitz, and 
$f_\theta(x_i)$ has a Lipschitz continuous gradient and $\nabla_x f_\theta(x_i)$ has a Lipschitz continuous Jacobian with respect to $\theta$ for all $i \in [N]$. The variation regularization can be regarded as an empirical alternative to Lipschitz regularization, where the goal is to promote smoothness of $f_{\theta}$ over the entire training dataset. Since problem \eqref{eq:regu} is closely related to the task of controlling the Lipschitz constant of deep neural networks, it has attracted significant interest.
In particular, problem \eqref{eq:regu} with $\ell(\cdot) =|\cdot|$ being the absolute loss and $f_{\theta}$ being a linear mapping is thoroughly investigated in \citep{blanchet2019multivariate}.

\vspace{1ex}
{\rm (ii)} \textbf{$\ell_1$-Regression with Heavy-Tailed Distributions~\citep{zhang2018ell_1}}
\vspace{1ex}

In linear regression, when the input and output follow heavy-tailed distributions, empirical risk minimization may no longer be a suitable approach. This observation has led to recent research on learning with heavy-tailed distributions~\citep{audibert2011robust}.  For instance, the work~\cite{zhang2018ell_1} proposes the following truncated minimization problem for $\ell_1$-regression with heavy-tailed distributions:
\begin{equation}
\label{eq:l1_regression}
\min_{\theta \in \Theta} \, \frac{1}{\alpha N}\sum_{i=1}^N  \psi\left(\alpha |y_i-\theta^\top x_i|\right). 
\end{equation}
Here, $\alpha > 0$ is a parameter and $\psi:\R\rightarrow \R$ is the nondecreasing truncation function defined by
\[
\psi(t)=\begin{cases} &\log \left(1+t+\frac{t^2}{2}\right), \quad t \geq 0 \\ &-\log \left(1-t+\frac{t^2}{2}\right),\quad t \leq 0 .\end{cases}
\]
Note that $\psi$ is a $\frac{1}{4}$-weakly convex function. Then, problem \eqref{eq:l1_regression} can be reformulated as 
\begin{equation*}
    \min_{\theta\in\Theta} \max_{q \in \R^N} \frac{1}{\alpha N} \sum_{i=1}^N \left[ \alpha q_i |y_i-\theta^\top x_i|-\psi^\star_\mu(q_i) -\frac{\mu\alpha^2}{2}\|y_i-\theta^\top x_i\|^2\right],
\end{equation*}
where $\psi_{\mu}(\cdot) = \psi(\cdot)+\frac{\mu}{2}\|\cdot\|^2$ and $\mu >\frac{1}{4}$ so as to enforce the strong convexity of $\psi_{\mu}$. To the best of our knowledge, there is no provably efficient algorithm for solving \eqref{eq:l1_regression}.

Besides the nonsmooth composite optimization problem \eqref{eq:one_non_s}, a variety of minimax problems can also be tackled by smoothed PLDA. Let us illustrate this by considering the general $\phi$-divergence DRO problem studied in \citep{levy2020large}.

\vspace{1ex}
{\rm (iii)} \textbf{General $\phi$-Divergence DRO \citep{levy2020large}}
\vspace{1ex}

Building on the problem setup in (i), we consider a slight generalization of $\phi$-divergence DRO, which is given by
\begin{equation}
\label{eq:phi-dro}
\min_{\theta\in\Theta} \max_{q \in \Delta_N:\sum_{i=1}^N \phi(N q_i) \leq n\rho} \frac{1}{N}\sum_{i=1}^N q_i \ell(f_\theta(x_i,y_i))-\psi(N q_i).
\end{equation}
Here, the functions $\phi, \psi: \mathbb{R} \rightarrow \mathbb{R}$ are convex and satisfy $\phi(1)=\psi(1)=0$, $\rho\ge0$ is the constraint radius, and $\Delta_N$ is the $N$-dimensional standard simplex. When $\phi=0$ and $\psi= \iota_{[0,1/\alpha)}$\footnote{For any set $\mathcal{S} \subseteq \mathbb{R}^{\ell}$, we use $\iota_{\mathcal{S}}: \mathbb{R}^{\ell} \rightarrow \{0, +\infty\}$ to denote the indicator function associated with $\mathcal{S}$.} with $\alpha\in[0,1)$, problem \eqref{eq:phi-dro} reduces to Conditional Value-at-Risk minimization~\citep{rockafellar2000optimization}. When $\psi =0$ and $\phi(t) = t\log t-t+1$, problem \eqref{eq:phi-dro} reduces to Kullback-Leibler divergence DRO \citep{hu2013kullback}. While the algorithmic approach in \citep{levy2020large} only applies to the case where $\ell$ and $f_\theta$ are both smooth, our proposed smoothed PLDA is able to handle a wide range of loss functions, including nonsmooth losses such as the absolute loss.

\section{Preliminaries}
\label{sec:pre}
Let us introduce the basic problem setup and key concepts that will serve as the basis for our subsequent analysis. 
\begin{assumption}[Problem setup] \label{ass:basic}
The following assumptions on the objective function $F$ of problem \eqref{eq:problem} hold throughout the paper. 
\begin{enumerate}[label=\normalfont(\alph*)]
\item   $(\textrm{\bf Primal function})$ For all $y\in \mathcal{Y}$, the function $F(\cdot,y): \mathbb{R}^n \rightarrow \mathbb{R}$ takes the form $F(\cdot, y):=h_y\circ c_y(\cdot)$, where $c_y:\R^n\rightarrow \R^m$ is 
continuously differentiable
with $L_c$-Lipschitz continuous Jacobian mapping, i.e.,
\[
\|\nabla c_y(x)-\nabla c_y(x')\| \leq L_c\|x-x'\| \quad \text { for all } x, x' \in \mathcal{X};
\]
and $h_y:\R^m\rightarrow \R$  is  convex and $L_h$-Lipschitz continuous, i.e.,
\[
|h_y(z)-h_y(z')| \leq L_h\|z-z'\| \quad \text { for all } z, z' \in \R^{m}.
\]
\item  $(\textrm{\bf Dual function})$ For all $x\in \mathcal{X}$, the function $F(x, \cdot):\R^d \rightarrow \R$ is continuously differentiable  with $\nabla_y F(\cdot,\cdot)$ being $L$-Lipschitz continuous on $\mathcal{X}\times \mathcal{Y}$, i.e., 
\[
\|\nabla_y F(x,y)-\nabla_y F(x',y')\| \leq L\|(x,y)-(x',y')\| \,
\text{ for all }  (x,y), (x',y') \in \mathcal{X}\times\mathcal{Y}.
\]
Without loss of generality, we may take $L=L_h L_c$.
\end{enumerate}
\end{assumption}
\begin{assumption}[K\L{} property with exponent $\theta$ for dual problem]
\label{ass:kl}
For all  $x\in\mathcal{X}$, the problem $\max_{y\in \mathcal{Y}} F(x,y)$ has a nonempty solution set and a finite optimal value. Moreover, there exist $\mu>0$ and $\theta \in [0,1)$ such that  
\[
\dist(0, -\nabla_y F(x,y)+\partial\iota_{\mathcal{Y}}(y)) \ge \mu  \left(\max\limits_{y'\in \mathcal{Y}} F(x,y')-F(x,y)\right)^{\theta}
\]
for all $x \in \mathcal{X}$ and  $y \in \mathcal{Y}\setminus \mathcal{Y}^\star(x)$, where $\mathcal{Y}^\star(x):= \argmax_{y' \in \mathcal{Y}} F(x,y')$ 
.
\end{assumption}
\begin{remark}
The P\L{} property has been widely used as a standard assumption in nonconvex-nonconcave minimax optimization~\citep{nouiehed2019solving,yang2022faster}. However, its usage is restricted to smooth unconstrained settings. To overcome this limitation, we employ the K\L{} property, which has been demonstrated in \citep{attouch2010proximal} to be a nonsmooth extension of the P\L{} property. The K\L{} exponent, a crucial quantity in the convergence rate analysis of first-order methods for nonconvex optimization~\citep{frankel2015splitting,li2018calculus}, plays a vital role in establishing the explicit convergence rate of  smoothed PLDA.
\end{remark}

Let us now examine the stationarity measures discussed in this paper. We define the value function $f:\mathbb{R}^n \rightarrow \mathbb{R}$, the potential function $F_r:\R^n\times \R^d\times \R^n\rightarrow\R$, and the dual potential value function $d_r:\R^d\times \R^n\rightarrow \R$ by
\begin{align*}
f(x) := \max_{y \in \mathcal{Y}} F(x,y),\quad F_r(x,y,z) := F(x,y) + \frac{r}{2} \| x-z \|^2,\quad d_r(y,z) := \min_{x \in \mathcal{X}} F_r(x,y,z),
\end{align*}
respectively, where we assume $r> L$ in the rest of this paper.

As the function $F(\cdot,y)$ is weakly convex for each $y \in \mathcal{Y}$~\citep[Lemma 4.2]{drusvyatskiy2019efficiency},  the value function $f$ is also weakly convex. Thus, motivated by the development in \citep{davis2019stochastic}, we may use the measure in Definition \ref{defi:primal-dual}(a), which we call \emph{optimization stationarity}, as a (primal) stationarity measure for \eqref{eq:problem}. On the other hand, it is shown in~\citep[Lemma 4.3]{drusvyatskiy2019efficiency} that $\nabla_z d_r(y,x) = r(x-\prox_{\frac{1}{r}F(\cdot,y) + \iota_{\mathcal{X}}}(x))$, where $\prox$ denotes the proximal mapping (see Definition \ref{defi:prox} in Appendix \ref{sec:lemmas}).
Thus, we may use the measure in Definition \ref{defi:primal-dual}(b), which we call \emph{game stationarity}, as a (primal-dual) stationarity measure for \eqref{eq:problem}.

\begin{definition}[Stationarity measures]
\label{defi:primal-dual}
Let $\epsilon \ge 0$ be given.
\begin{enumerate}[label=\normalfont(\alph*)]
\item The point $x\in\mathcal{X}$ is an $\epsilon$-optimization-stationary point {\rm($\epsilon$-OS)} of problem \eqref{eq:problem} if 
\[\|\prox_{\frac{1}{r}f+\iota_{\mathcal{X}}}(x)-x\|\leq \epsilon.\]
\item The pair $(x, y)\in\mathcal{X}\times \mathcal{Y}$ is an $\epsilon$-game-stationary point {\rm ($\epsilon$-GS)} of problem \eqref{eq:problem} if 
\[
\|\nabla_{z} d_r(y,x)\|\leq\epsilon \quad \text{and} \quad {\rm dist}(0,-\nabla_{y} F(x, y)+\partial\iota_{\mathcal{Y}}(y))\leq\epsilon. \footnote{For any $x \in \mathbb{R}^{\ell}$ and $\mathcal{S} \subseteq \mathbb{R}^{\ell}$, we use $\dist(x, \mathcal{S}):=\inf\limits_{z\in\mathcal{S}}\|x-z\|$ to denote the distance from $x$ to $\mathcal{S}$.} 
\]
\end{enumerate}
\end{definition}
Since weakly convex functions and smooth functions are subdifferentially regular, we can utilize the Fr\'{e}chet subdifferential in the above definitions. A brief overview of  various subdifferential constructions in nonsmooth nonconvex optimization can be found in \citep{li2020understanding}. In Section \ref{sec:sta}, we will explore the quantitative relationship between the two stationarity measures in Definition \ref{defi:primal-dual}.

\section{Proposed Algorithm --- Smoothed PLDA}
\label{sec:main}

GDA is a commonly used method for solving smooth nonconvex-concave  problems. However, its vanilla implementation can lead to oscillations, and a conventional approach to mitigate this issue is to use diminishing step sizes. Unfortunately, one cannot achieve the optimal iteration complexity with this approach. In view of this, the work \citep{zhang2020single} proposes a Nesterov-type smoothing technique that stabilizes the primal sequence and achieves a better balance between primal and dual updates. However, the technique crucially relies on the smoothness of the objective function.

The nonsmoothness in our problem setting poses a key challenge to algorithm design. To overcome this challenge, we fully leverage the composite structure of the primal function and adopt the proximal linear scheme for the primal update~\citep{drusvyatskiy2019efficiency}. Specifically, given $r > L$ and $\lambda > 0$, consider the update
\begin{equation}
    \label{eq:primal_update}
    x^{k+1} \in \mathop{\argmin}_{x\in\mathcal{X}} \left\{ F_{x^k,\lambda}(x,y^k)
  +\frac{r}{2}\|x-z^{k}\|^2
    \right\},
\end{equation}  
where 
\[
F_{x^k,\lambda}(x,y^k):=h_{y^k}\left(c_{y^k}(x^{k})+\nabla c_{y^k}(x^{k})^\top(x-x^{k})\right)+\frac{1}{2\lambda}\|x-x^k\|^2.
\]
Here, $\{z^k\}$ is an auxiliary sequence. In particular, when $h(t) = t$, the primal update \eqref{eq:primal_update} reduces to the standard gradient descent step as described in~\citep{zhang2020single}. The  dual update and other smoothing steps are the same as those in the smoothed GDA developed in  \citep{zhang2020single}.

Our smoothed PLDA algorithm is formally presented in Algorithm \ref{alg}. 

\begin{algorithm}[H]\label{alg}
    \caption{\textbf{Smoothed PLDA}}
    \SetKwInOut{Input}{Input}
    \Input{Initial point $(x^0,y^0,z^0)$ and parameters $r>L$, $\lambda> 0$, $\alpha>0$, $\beta\in(0,1)$}
    \For{$k=0,1,2,\ldots$}{
    $x^{k+1}:= \mathop{\argmin}\limits_{x \in \mathcal{X}} \left\{F_{x^k,\lambda}(x,y^k) + \frac{r}{2}\|x-z^k\|^2\right\}$\\
    $y^{k+1}:=\proj_{\mathcal{Y}}\left(y^k+\alpha \nabla_{y} F(x^{k+1}, y^k)\right)$ \\
    $z^{k+1}:=z^k+\beta(x^{k+1}-z^k)$}
\end{algorithm}

\begin{remark}[Primal update]
\label{rmk:primal}
Finding a closed-form solution for the primal update \eqref{eq:primal_update} is challenging due to the nonsmooth composite structure. However, problem \eqref{eq:primal_update} is strongly convex. If it in addition possesses certain error bound property, then an $\epsilon$-optimal solution can be found by suitable first-order methods in at most $\mathcal{O}(\log(\epsilon^{-1}))$ iterations; see~\citep{zhou2017unified} and~\citep[Chapter 8]{cui22modern} for details and further references. In general, the lower bound on the iteration complexity of first-order methods for solving nonsmooth strongly convex problems is $\Omega(\epsilon^{-1})$~\citep[Theorem 3.13]{bubeck2015convex}. 
We emphasize that the optimal approach to solving \eqref{eq:primal_update} depends on the problem's specific structure. For simplicity and to focus on our main contributions, we assume the exactness of the primal update and concentrate on the iteration complexity of the outer loop in subsequent analyses.
\end{remark}

\section{Convergence Analysis of Smoothed PLDA}
\label{sec:analysis}
In this section, we present the  main theoretical contributions of this paper.
Our goal is to investigate the convergence rate of smoothed PLDA (see Theorem \ref{thm:general}) under different settings, including nonconvex-K\L{} and nonconvex-concave.

Before proceeding, let us fix the notation as in Table \ref{tab:notation}. 


\begin{table*}[ht]
  \caption{Notation}
 \centering
\begin{tikzpicture}
 \node[drop shadow,fill=white,inner sep=0pt] 
 {
 \resizebox{0.98\textwidth}{!}{ 
\begin{tabular}{|c|c|c|}
 \hline
 \textbf{Notation}  & \textbf{Definition} & \textbf{Notes} \\ \hline
 $F_r(x,y,z)$   & $F(x,y)+\frac{r}{2}\|x-z\|^2$ & potential function               \\ \hline
$d_r(y,z)$           & $\min\limits_{x \in \mathcal{X}}F_r(x,y,z)$ &    dual potential value function      \\ \hline
$p_r(z)$           & $\max\limits_{y \in \mathcal{Y}}\min\limits_{x \in \mathcal{X}}F_r(x,y,z)$ &    proximal function        \\ \hline
$x_r(y,z)$  & $\mathop{\argmin}\limits_{x \in \mathcal{X}}F_r(x,y,z)$  & ---\\ \hline
$x^\star_r(z)$   & $\mathop{\argmin}\limits_{x \in \mathcal{X}}\max\limits_{y \in \mathcal{Y}}F_r(x,y,z)$ & ---               \\ \hline
$Y(z)$     & $\mathop{\argmax}\limits_{y\in \mathcal{Y}}d_r(y,z)$ &   $y(z) \in Y(z)$  \\ \hline
$y_{+}(z)$    & $\proj_{\mathcal{Y}}\left(y+\alpha \nabla_{y} F(x_r(y, z), y)\right)$  & one-step projected gradient ascent on the dual function\\ \hline
\end{tabular}}
};
\end{tikzpicture}
\label{tab:notation}
\end{table*}
\subsection{Analysis Framework}
\label{sec:framework}
We start by presenting the key ideas in the analysis.
 \begin{tcolorbox}
\textbf{Step 1:} Construct a \textit{Lyapunov function} that 
possesses the sufficient decrease property.
\end{tcolorbox}
Building on \citep{zhang2020single}, we define the Lyapunov function $\Phi_r:\R^n\times \R^d\times \R^n\rightarrow\R$ as follows:
\[
\Phi_r(x,y,z):= \underbrace{F_r(x,y,z)-d_r(y,z)}_{\textnormal{Primal Descent}} + \underbrace{p_r(z)-d_r(y,z)}_{\textnormal{Dual Ascent}} + \underbrace{p_r(z)}_{\textnormal{Proximal Descent}}. 
\]
The sole distinction between the above Lyapunov function and the one in \citep{zhang2020single} is that we swap the order of min and max in the definition of $p_r$, transitioning from $\min_{x \in \mathcal{X}}\max_{y \in \mathcal{Y}}F_r(x,y,\cdot)$ to $\max_{y \in \mathcal{Y}}\min_{x \in \mathcal{X}}F_r(x,y,\cdot)$. This alteration enables a more comprehensive analysis of cases where the dual function is nonconcave. Each term in the potential function $\Phi_r$ is tightly connected to the algorithmic updates. Specifically, the updates for the primal, dual, and auxiliary variables can be viewed as a descent step on the primal function $F_r$, an approximate ascent step on the dual potential value function $d_r$, and an approximate descent step on the proximal function $p_r$, respectively.

Next, our task is to quantify the change in each term of the Lyapunov function after one round of updates. To do so, the key step is the following:
\begin{tcolorbox}
\textbf{Step 2:} Establish a primal error bound (i.e., Proposition \ref{prop:lip}) to quantify the primal descent.
\end{tcolorbox}
Intuitively, the primal error bound provides an estimate of the distance between the current point $x^{k+1}$ and the optimal solution \[x_r(y^k,z^k) = \mathop{\argmin}\limits_{x\in \mathcal{X}} F_r(x,y^k,z^k)\] in terms of the iterates gap $\|x^{k+1}-x^k\|$ that results from the proximal linear scheme.

Equipped with the primal error bound, we can obtain the following basic descent estimate of the Lyapunov function:

\begin{proposition}[Basic descent estimate of $\Phi_r$]
\label{prop:decrease}
    Let 
    \[r\ge3L,\ \lambda\le \frac{1}{L},\ \alpha \leq \min\left\{\frac{1}{10L},\ \frac{1}{4L\zeta^2} \right\},\ \beta  \leq \min \left\{\frac{1}{28},\ \frac{(r-L)^2}{14\alpha r(2r-L)^2}\right\},\] 
    where $\zeta>0$ is the constant defined in Proposition \ref{prop:lip}.
    Then, for any $k\ge0$, we have
\begin{equation*}
\begin{aligned}
\Phi_r^k-\Phi_r^{k+1}\ge\ &  \frac{3}{8\lambda}\|x^{k}-x^{k+1}\|^{2}+\frac{1}{8\alpha}\|y^{k}-y_{+}^{k}(z^{k+1})\|^2 +\frac{4r}{7\beta}\|z^{k}-z^{k+1}\|^{2} \\
& - 14r\beta\|x_r(y(z^{k+1}), z^{k+1})-x_r(y_{+}^{k}(z^{k+1}), z^{k+1})\|^{2},
\end{aligned}
\end{equation*}
where $\Phi_r^k:= \Phi_r(x^k,y^k,z^k)$. 
\end{proposition}
The proof of Proposition \ref{prop:decrease}, which is based on the primal descent, dual ascent, and proximal descent properties, is given in Appendix \ref{sec:suff_decrease}.

Now, we proceed to bound the negative term 
\[
\|x_r(y(z^{k+1}), z^{k+1})-x_r(y_{+}^{k}(z^{k+1}), z^{k+1})\|
\]
in the basic descent estimate using some positive terms, so as to establish the sufficient decrease property.
To achieve this goal, we explicitly quantify the primal-dual relationship by a dual error bound,  which will allow us to bound the primal update $\|x_r(y(z^{k+1}), z^{k+1})-x_r(y_{+}^{k}(z^{k+1}), z^{k+1})\|$ by the dual update $\|y^{k}-y_{+}^{k}(z^{k+1})\|$.

\begin{tcolorbox}
\textbf{Step 3:} Establish a dual error bound (i.e., Proposition \ref{prop:dual_eb_KL}) to show that the primal and dual updates are balanced in the sense that
\begin{equation}
\label{eq:step_dual}
\|x_r(y(z^{k+1}), z^{k+1})-x_r(y_{+}^{k}(z^{k+1}), z^{k+1})\| \leq \omega \|y^{k}-y_{+}^{k}(z^{k+1})\|^\upsilon
\end{equation}
for some constants $\omega>0$ and $\upsilon>0$. 
\end{tcolorbox}
We will demonstrate later that the growth power $\upsilon$ in \eqref{eq:step_dual} is directly determined by the K\L{} exponent  of the dual problem.

The dual error bound allows us to establish the global convergence rate of smoothed PLDA. To illustrate the main idea, we consider two distinct regimes:
\begin{enumerate}[label=\normalfont(\alph*)]
\item When $\upsilon\in [1,\infty)$, we can show that
\begin{align*}
\|x_r(y(z^{k+1}), z^{k+1})-x_r(y_{+}^{k}(z^{k+1}), z^{k+1})\|
& \leq \, \omega \|y^{k}-y_{+}^{k}(z^{k+1})\|^{\upsilon}\\
&\leq\, \omega\cdot {\rm diam}(\mathcal{Y})^{\upsilon-1} \cdot \|y^{k}-y_{+}^{k}(z^{k+1})\|.\notag
\end{align*}
Owing to the above \textbf{homogeneous} error bound, there exist constants $a,b,c>0$  such that
\begin{align*}
&\Phi_r^k-\Phi_r^{k+1} 
\ge\ a \|x^{k}-x^{k+1}\|^{2}+b\|y^{k}-y_{+}^{k}(z^{k+1})\|^2 +c\|z^{k}-z^{k+1}\|^{2}.
\end{align*}
Subsequently, it becomes straightforward to achieve an $\mathcal{O}(\epsilon^{-2})$ iteration complexity.
\item When $\upsilon \in (0,1)$, we encounter an \textbf{inhomogeneous} error bound  instead. If the negative term $\|x_r(y(z^{k+1}), z^{k+1})-x_r(y_{+}^{k}(z^{k+1}), z^{k+1})\|$ is large, then we can still establish the sufficient decrease property by choosing a sufficiently small $\beta$. The final iteration complexity will explicitly depend on $\upsilon$.
\end{enumerate}
The details of Step 3 can be found in  Subsection \ref{subsec:iter}.

\subsection{Primal Error Bound}
\label{sec:primal-erb}
In this subsection, we establish the primal error bound property of smoothed PLDA. Recall that for the smoothed GDA in \citep{zhang2020single}, its primal error bound property essentially follows from the standard Luo-Tseng error bound  for structured strongly convex problems~\citep{luo1993error,zhou2017unified,zhang2020proximal}. Specifically, it is shown in \citep[Lemma B.2]{zhang2020single} that
\[ 
 \|x^{k+1}-x_r(y^{k}, z^{k})\|\leq\zeta\|x^k-\underbrace{\proj_{\mathcal{X}}(x^k-\lambda \nabla_x F_r(x^k,y^k,z^k)}_{x^{k+1}})\|,
\]
where $\lambda>0$ is the step size for primal descent and $\zeta>0$ is a constant.
However, in our problem setting, the primal function does not satisfy the gradient Lipschitz continuity condition. One of our primary contributions is to demonstrate that even so, a Lipschitz-type primal error bound still holds. The primal error bound is crucial to establishing the sufficient decrease property of the Lyapunov function we mentioned earlier.

\begin{proposition} [Lipschitz-type primal error bound]
\label{prop:lip}
 For any $k\ge0$, we have
    \begin{equation}\label{iter-result1}
        \|x^{k+1}-x_r(y^{k}, z^{k})\| \leq \zeta\|x^{k}-x^{k+1}\|,
        \end{equation}
where   $\zeta: =\frac{2(r-L)^{-1}+(\lambda^{-1}+L) ^{-1}}{(\lambda^{-1}+L) ^{-1}} \left(\sqrt{\frac{2L}{\lambda^{-1}+L}}+1\right)$.
\end{proposition}
\begin{proof}
For ease of notation, we define $\hat{F}_{y,z}(\cdot): = F_r(\cdot,y,z) + \iota_{\mathcal{X}}(\cdot)$ for any $y\in\mathcal{Y}$ and $z\in\R^n$.  As $x_r(y,z)$ is the optimal solution of $\min_{x \in \mathbb{R}^n} \hat{F}_{y,z} (x)$ and $F_r(\cdot,y,z)$ is $(r-L)$-strongly convex (see the remark after Fact \ref{fact-2} in Appendix \ref{sec:lemmas}), we have
\begin{equation}
    \label{eq:lip_1} 
    \hat{F}_{y^k,z^k}(x)-\hat{F}_{y^k,z^k}(x_r(y^k,z^k)) \ge \frac{r-L}{2} \|x-x_r(y^k,z^k)\|^2 \quad \text{for all}\ x \in \mathcal{X}.
\end{equation}
In addition, by the convexity of $\hat{F}_{y^k,z^k}$, we obtain 
\begin{equation}
    \label{eq:lip_2}
    \begin{aligned}
    \hat{F}_{y^k,z^k}(x)-\hat{F}_{y^k,z^k}(x_r(y^k,z^k)) & \leq \dist(0, \partial \hat{F}_{y^k,z^k}(x))\cdot \|x-x_r(y^k,z^k)\| \\ 
    \end{aligned}
\end{equation}
$\text{for all}\ x\in\mathcal{X}$.
Combining \eqref{eq:lip_1} and \eqref{eq:lip_2} yields
\[
\|x-x_r(y^k,z^k)\| \leq \frac{2}{r-L} \dist(0, \partial \hat{F}_{y^k,z^k}(x)) \quad \text{for all}\ x\in\mathcal{X}. 
\]
By utilizing the equivalence between proximal and subdifferential error bounds \citep[Theorem 3.4]{drusvyatskiy2018error}, we  conclude that
\begin{equation}
\label{eq:;lip_3}
    \|x-x_r(y^k,z^k)\|  \leq \frac{2(r-L)^{-1}+t}{t} \left\|x - \prox_{t  \hat{F}_{y^k,z^k}}(x)\right\| \text{ for all } t>0, \, x\in\mathcal{X}.
\end{equation}
Now, let us examine the relationship between  $\|x^{k+1}-x^k\|$ and $\|x^{k+1} - \prox_{t \hat{F}_{y^k,z^k}}(x^{k+1})\|$. Let $\varphi_k:\R^n\rightarrow \R \cup \{+\infty\}$ be the function defined by
\[
\varphi_k(x):=\hat{F}_{y^k,z^k}(x)+\frac{\lambda^{-1}+L}{2}\|x-x^k\|^{2}-\frac{\lambda^{-1} + L}{2}\|x-x^{k+1}\|^{2}.
\]
It is clear that for any $x\in\mathcal{X}$,
\begin{align*}\label{safeguard-key1}
            & \hat{F}_{y^k,z^k}(x)
             = F(x,y^k) + \frac{r}{2}\|x-z^k\|^2 \notag\\
             \ge\ & F_{x^k,\lambda}(x,y^k) -\frac{\lambda^{-1}+L}{2}\|x-x^{k}\|^2 + \frac{r}{2}\|x-z^k\|^2\notag \\ 
              \ge\ &  F_{x^k,\lambda}(x^{k+1},y^k) +\frac{\lambda^{-1}+r}{2}\|x-x^{k+1}\|^2 -\frac{\lambda^{-1}+L}{2}\|x-x^{k}\|^2 +\frac{r}{2}\|x^{k+1}-z^k\|^2\notag\\
             \ge\ & F(x^{k+1},y^k) + \frac{\lambda^{-1}-L}{2}\|x^{k}-x^{k+1}\|^2 +\frac{\lambda^{-1}+r}{2}\|x-x^{k+1}\|^2\ \\
             & \ -\frac{\lambda^{-1}+L}{2}\|x-x^{k}\|^2 + \frac{r}{2}\|x^{k+1}-z^k\|^2 \notag \\ 
             =\ & \hat{F}_{y^k,z^k}(x^{k+1})+\frac{\lambda^{-1}-L}{2}\|x^{k+1}-x^k\|^2 +\frac{\lambda^{-1}+r}{2}\|x-x^{k+1}\|^2 -\frac{\lambda^{-1}+L}{2}\|x-x^{k}\|^2,
\end{align*}
where the first and third inequalities follow from Fact \ref{fact-2}, and the second one is from the $(\lambda^{-1} + r)$-strong convexity of $F_{x^k,\lambda}(\cdot,y^k) +\frac{r}{2}\|\cdot-z^k\|^2$ on $\mathcal{X}$ and the definition of $x^{k+1}$.
Since $r > L$, we see that for any $x\in\mathcal{X}$,
\begin{align*}
\varphi_k(x)
&\ge \hat{F}_{y^k,z^k}(x^{k+1})+\frac{\lambda^{-1}-L}{2}\|x^{k+1}-x^{k}\|^2+\frac{r-L}{2}\|x-x^{k+1}\|^2\notag\\
&\ge\hat{F}_{y^k,z^k}(x^{k+1})+\frac{\lambda^{-1}-L}{2}\|x^{k+1}-x^{k}\|^2.
\end{align*}
It then follows from the definition of $\varphi_k$ that 
\[\varphi_k(x^{k+1})-\inf_{x\in\R^n} \varphi_k(x)\leq L \|x^{k+1}-x^k\|^2.\]
This, together with the definition of the proximal mapping, implies that for any $\rho>0$,
\begin{align}
&\varphi_k\left(\operatorname{prox}_{\frac{1}{\rho} \varphi_k}(x^{k+1})\right)+\frac{\rho}{2}\|\operatorname{prox}_{\frac{1}{\rho} \varphi_k}(x^{k+1})-x^{k+1}\|^{2}\leq  \varphi_k(x^{k+1})\notag\\
\leq\ & \inf_{x\in\R^n} \varphi_k(x)+L \|x^{k+1}-x^k\|^2
\leq \varphi_k\left(\operatorname{prox}_{\frac{1}{\rho} \varphi_k}(x^{k+1})\right)+L \|x^{k+1}-x^k\|^2,\notag
\end{align}
which in turn implies that
\begin{equation}\label{safeguard-key111}
\|\operatorname{prox}_{\frac{1}{\rho} \varphi_k}(x^{k+1})-x^{k+1}\|\leq \sqrt{\frac{2L}{\rho}} \|x^{k+1}-x^k\|.    
\end{equation}
Recall that our goal is to find a relationship between  $\|x^{k+1}-x^k\|$ and $\|x^{k+1} - \prox_{t \hat{F}_{y^k,z^k}}(x^{k+1})\|$. To do this, it suffices to relate $\prox_{t \hat{F}_{y^k,z^k}}(x^{k+1})$ to $\operatorname{prox}_{\frac{1}{\rho} \varphi_k}(x^{k+1})$.  By definition, we have
\[\hat{F}_{y^k,z^k}(x)+\frac{\lambda^{-1}+L}{2}\|x-x^{k}\|^{2}=\varphi_k(x)+\frac{\lambda^{-1}+L}{2}\|x-x^{k+1}\|^{2},\]
which implies that $\operatorname{prox}_{\frac{1}{\lambda^{-1}+L} \varphi_k}(x^{k+1})=\operatorname{prox}_{\frac{1 }{\lambda^{-1}+L}\hat{F}_{y^k,z^k}}(x^k)$. 
By setting $t = (\lambda^{-1} + L)^{-1}$ in \eqref{eq:;lip_3} and $\rho = \lambda^{-1} + L$ in \eqref{safeguard-key111}, we conclude that
\begin{align*}
    &\|x^{k+1}-x_r(y^{k}, z^{k})\| \notag\\
    \leq\ & \frac{2(r-L)^{-1}+(\lambda^{-1}+L) ^{-1}}{(\lambda^{-1}+L) ^{-1}}\left\|x^{k+1}- \prox_{\frac{1}{\lambda^{-1}+L} \hat{F}_{y^k,z^k}}(x^{k+1})\right\| \\
    \leq\ &   \frac{2(r-L)^{-1}+(\lambda^{-1}+L) ^{-1}}{(\lambda^{-1}+L) ^{-1}}\left(\left\|x^{k+1}- \prox_{\frac{1}{\lambda^{-1}+L} \hat{F}_{y^k,z^k}}(x^{k})\right\|+ \|x^{k+1}-x^k\|\right) \\
     =\ &   \frac{2(r-L)^{-1}+(\lambda^{-1}+L) ^{-1}}{(\lambda^{-1}+L) ^{-1}}\left(\left\|x^{k+1}- \prox_{\frac{1}{\lambda^{-1}+L} \varphi_k}(x^{k+1})\right\|+ \|x^{k+1}-x^k\|\right) \\
    \leq\ & \frac{2(r-L)^{-1}+(\lambda^{-1}+L) ^{-1}}{(\lambda^{-1}+L) ^{-1}} \left(\sqrt{\frac{2L}{\lambda^{-1}+L}}+1\right) \|x^{k+1}-x^k\|,
\end{align*}
where the second inequality is due to the nonexpansiveness of the proximal mapping. The proof is complete.
\end{proof}

\begin{remark}
We can also establish the primal error bound \eqref{iter-result1} using \citep[Theorem 5.3 and Theorem 5.10]{drusvyatskiy2018error}. However, since these two theorems are proved using Ekeland's variational principle, the resulting constant $\zeta$ will be larger. Our approach  to establishing \eqref{iter-result1} is more elementary and yields a smaller constant $\zeta$. It is worth noting that the  constant $\zeta$ obtained in Proposition \ref{prop:lip} plays a crucial role in controlling the step sizes for both the primal and dual updates.
\end{remark}

\subsection{Dual Error Bound}
\label{sec:dual-error}
After establishing the primal error bound and verifying the basic descent estimate in Proposition \ref{prop:decrease} (which completes Step 2), we proceed to use the K\L{} exponent  of the dual problem to derive a new dual error bound. Such an error bound provides a crucial relationship between the primal and dual updates and reveals an interesting phase transition phenomenon. Moreover, it furnishes an effective and theoretically justified approach for balancing the primal and dual updates and allows us to establish the global convergence rate of smoothed PLDA. We should point out that our use of the K{\L} exponent represents a marked departure from typical approaches, which only focus on primal nonconvex optimization~\citep{attouch2013convergence,li2018calculus}.

\begin{proposition}[Dual error bound with K\L{} exponent] 
\label{prop:dual_eb_KL}
Suppose that Assumption \ref{ass:kl} holds. Then, for any $y\in\mathcal{Y}$ and $z \in \mathbb{R}^n$, we have
\begin{enumerate}[label=\normalfont(\alph*)]
 \item \rm{(K\L\ exponent $\theta=0$):} $\|x_r(y(z),z)-x_r(y_+(z),z)\| \leq \omega_1 \|y-y_+(z)\|$, 
  \item \rm{(K\L\ exponent $\theta\in(0,1)$):} $\|x_r(y(z),z)-x_r(y_+(z),z)\| \leq \omega_2\|y-y_+(z)\|^{\frac{1}{2\theta}}$, 
  \end{enumerate}
    where $\omega_1:= \frac{\sqrt{2L\cdot {\rm diam}(\mathcal{Y})}\cdot (1+\alpha L)}{\alpha\mu\sqrt{r-L}}$ and $\omega_2:=\frac{\sqrt{2}}{\sqrt{r-L}}\left(\frac{1+\alpha L(1+\sigma_2)}{\alpha\mu}\right)^{\frac{1}{2\theta}}$ (recall that $y_+(z) = \proj_{\mathcal{Y}} ( y + \alpha \nabla_y F(x_r(y,z), y)$).  
\end{proposition}
\begin{proof}
Let $\psi:\R^n\times \R^n\rightarrow\R$ be the function defined by
\[
\psi(x,z) =  F_r(x,y(z),z). 
\]
Consider arbitrary $x \in \mathcal{X}$, $y \in \mathcal{Y}$, and $z \in \mathbb{R}^n$. Note that the function $\psi(\cdot,z)$ is $(r-L)$-strongly convex. Since
\begin{align*}
\mathop{\argmin}_{x'\in\mathcal{X}}\psi(x',z) & =\mathop{\argmin}_{x'\in\mathcal{X}} F_r(x',y(z),z) =  x_r(y(z),z),
\end{align*}
we see that 
\begin{equation}
\label{eq:dual1}
\psi(x,z) - \psi(x_r(y(z),z),z) \ge \frac{r-L}{2}\|x-x_r(y(z),z)\|^2. 
\end{equation}
In addition, we have
\begin{align} 
\label{eq:dual2}
&\psi(x,z) - \psi(x_r(y(z),z),z)\notag\\   
\leq \  &\psi(x,z)-F_r(x_r(y_+(z),z),y_+(z),z)\notag \\
\leq \ & \max_{y^\prime\in \mathcal{Y}} F(x,y^\prime) + \frac{r}{2}\|x-z\|^2-F_r(x_r(y_+(z),z),y_+(z),z)\notag\\
=\,& \max_{y^\prime\in \mathcal{Y}}F(x,y^\prime)-F(x_r(y_+(z),z),y_+(z)) + \frac{r}{2}\|x-z\|^2-\frac{r}{2}\|x_r(y_+(z),z)-z\|^2,
\end{align}
where the first inequality follows from
\begin{align*}
F_r(x_r(y_+(z),z),y_+(z),z) 
=\ & \min_{x'\in\mathcal{X}}\left\{ F(x',y_+(z)) +\frac{r}{2}\|x'-z\|^2\right\} \nonumber \\
\leq\ & \max_{y' \in \mathcal{Y}}\min_{x'\in\mathcal{X}}\left\{F(x',y') +\frac{r}{2}\|x'-z\|^2\right\} \nonumber \\
=\ & \min_{x'\in\mathcal{X}}\left\{F(x',y(z)) +\frac{r}{2}\|x'-z\|^2\right\} \nonumber \\
=\ & \min_{x'\in\mathcal{X}} \psi(x',z)
= \psi(x_r(y(z),z),z). 
\end{align*}
As \eqref{eq:dual1} and \eqref{eq:dual2} hold for any $x\in\mathcal{X}$, we obtain the intermediate relation
\begin{equation}\label{OS-GS-better-key3-main} 
     \frac{r-L}{2}\|x_r(y(z),z)-x_r(y_+(z),z)\|^2\leq  \max\limits_{y^\prime\in \mathcal{Y}} F(x_r(y_+(z),z),y^\prime) - F(x_r(y_+(z),z),y_+(z)) 
\end{equation}
by taking $x= x_r(y_+(z),z)$.

Now, we utilize the K{\L} exponent $\theta$ of the dual problem to bound the right-hand side of 
\eqref{OS-GS-better-key3-main}  in terms of $\|y-y_+(z)\|$. Consider the following two cases:
 \begin{enumerate}[label=\normalfont(\alph*)]
 \item Suppose that $\theta=0$. If $F(x_r(y_+(z),z),y_+(z))=\max_{y'\in\mathcal{Y}} F(x_r(y_+(z),z),y')$, then the desired inequality follows trivially from~\eqref{OS-GS-better-key3-main}. Otherwise, we have $y_+(z) \in \mathcal{Y} \setminus \mathcal{Y}^\star(x_r(y_+(z),z))$. By Assumption~\ref{ass:kl}, we have
\[
\dist(0, -\nabla_y F(x_r(y_+(z),z),y_+(z))+\partial\iota_{\mathcal{Y}}(y_+(z)))\ge \mu.
\]
Since  $F(x,\cdot)$ is continuously differentiable on the compact set $\mathcal{Y}$, $\|\nabla_y F(x,\cdot)\|$ is bounded on $\mathcal{Y}$. Without loss of generality, we can assume that  $\|\nabla_y F(x,\cdot)\|\leq L$ on $\mathcal{Y}$. According to  the mean value theorem, we have 
\[
\max\limits_{y^\prime\in \mathcal{Y}} F(x_r(y_+(z),z),y^\prime) - F(x_r(y_+(z),z),y_+(z))
\leq L \cdot {\rm diam}(\mathcal{Y}).
\]
Using the fact that $y_+(z) = \proj_{\mathcal{Y}} (y + \alpha \nabla_y F(x_r(y,z), y))$, we bound
\begin{align*}
 &\max\limits_{y^\prime\in \mathcal{Y}} F(x_r(y_+(z),z),y^\prime) - F(x_r(y_+(z),z),y_+(z))\notag\\
\leq\ & \frac{L \cdot {\rm diam}(\mathcal{Y})}{\mu^2}\cdot\dist^2(0, -\nabla_y F(x_r(y_+(z),z),y_+(z))+\partial\iota_{\mathcal{Y}}(y_+(z)))\notag\\ 
\leq \ & \frac{L b^2 \cdot {\rm diam}(\mathcal{Y})}{\mu^2}\|y_+(z)-y\|^2,
\end{align*}
where $b:=\frac{1}{\alpha} + L > 0$ and the last inequality is due to the fact that the projected gradient ascent method satisfies the so-called \textit{relative error condition}
\citep[Section 5]{attouch2013convergence}. 
It follows that
\[
\|x_r(y(z),z)-x_r(y_+(z),z)\| \leq\frac{\sqrt{2L\cdot {\rm diam}(\mathcal{Y})}\cdot (1+\alpha L)}{\alpha\mu\sqrt{r-L}}\|y-y_+(z)\|. 
\]
\item If $\theta \in (0,1)$, then we have
\begin{align*}
& \mu\left(\max\limits_{y^\prime\in \mathcal{Y}} F(x_r(y_+(z),z),y^\prime) - F(x_r(y_+(z),z),y_+(z))\right)^{\theta}\notag\\
\leq \ & \dist(0, -\nabla_y F(x_r(y_+(z),z),y_+(z))+\partial\iota_{\mathcal{Y}}(y_+(z))) \notag\\ 
\leq \ & \dist(0, -\nabla_y F(x_r(y,z),y_+(z)) +\partial\iota_{\mathcal{Y}}(y_+(z)))   \notag\\
\ & + \|\nabla_y F(x_r(y,z),y_+(z))-\nabla_y F(x_r(y_+(z),z),y_+(z))\|\notag\\ 
{\leq} \ & \dist(0, -\nabla_y F(x_r(y,z),y_+(z))+\partial\iota_{\mathcal{Y}}(y_+(z)))+ L\sigma_2\|y-y_+(z)\|\notag\\
{\leq} \ & \left(\frac{1}{\alpha}+L+L\sigma_2\right)\|y_+(z)-y\|,
\end{align*}
where the third inequality follows from the $L$-Lipschitz continuity  of $\nabla_y F(\cdot,\cdot)$ and  \eqref{lip-y};  the last inequality follows from the relative error condition of the projected gradient ascent method. This, together with \eqref{OS-GS-better-key3-main}, yields
\[
\|x_r(y(z),z)-x_r(y_+(z),z)\|\leq \frac{\sqrt{2}}{\sqrt{r-L}}\left(\frac{1+\alpha L(1+\sigma_2)}{\alpha\mu}\right)^{\frac{1}{2\theta}} 
\|y-y_+(z)\|^{\frac{1}{2\theta}}.
\]
 \end{enumerate}
The proof is complete.
\end{proof}
The dual error bound in Proposition \ref{prop:dual_eb_KL} involves the primal-dual quantity $x_r(\cdot,z)$. As the following corollary shows, one can also derive an alternative dual error bound that involves the pure primal quantity $x_r^{\star}(z)$. 
Since $x_r^{\star}(z)$ is the proximal mapping of $\tfrac{1}{r} f+ \iota_{\mathcal{X}}$ at $z \in \mathbb{R}^n$, the alternative dual error bound is closely related to the optimization-stationarity measure. As we shall see, such an error bound plays a crucial role in establishing not only the iteration complexity of smoothed PLDA for finding an OS of problem \eqref{eq:problem} (see Theorem \ref{thm:general}) but also a quantitative relationship between the optimization-stationarity and game-stationarity measures (see Section \ref{sec:sta}).

\begin{corollary}[Alternative dual error bound with K\L{} exponent]
Suppose that  Assumption \ref{ass:kl} holds. Then, for any $y \in \mathcal{Y}$ and $z \in \mathbb{R}^n$,  we have
\begin{enumerate}[label=\normalfont(\alph*)]
 \item \rm{(K\L\ exponent $\theta=0$):} $\|x_r^\star(z)-x_r(y_+(z),z)\| \leq \omega_1 \|y-y_+(z)\|$,
  \item \rm{(K\L\ exponent $\theta\in(0,1)$):} $\|x_r^\star(z)-x_r(y_+(z),z)\| \leq \omega_2\|y-y_+(z)\|^{\frac{1}{2\theta}}$. 
  \end{enumerate}
 \label{prop:dual_eb_KL2}   
\end{corollary}

The ideas of the proof of Corollary \ref{prop:dual_eb_KL2} are similar to those of Proposition \ref{prop:dual_eb_KL}. The main difference lies in the definition of $\psi$, which needs to be modified as follows:
\[
\psi(x,z) = \max\limits_{y\in \mathcal{Y}} F_r(x,y,z).
\]
We refer the reader to Appendix \ref{app:coro} for details.

\begin{remark}
{\rm (i)} Our results extend those in \citep{yang2022faster} to a more general setting, where the primal function is nonsmooth and the dual problem possesses the K\L{} property with exponent taking any value in $[0,1)$. In particular, this covers the case where there are  constraints on the dual variable. More importantly, all the existing techniques cannot handle the nonsmooth structure of the primal function. 
Our generalization provides a deeper understanding of the relationship between the primal and dual updates and serves as a useful tool for studying various stationarity measures in Section \ref{sec:sta}. 
{\rm (ii)} In the concave case where Assumption \ref{ass:kl} is not satisfied, one can establish a similar dual error bound with $\theta=1$ and $\omega$ being related to ${\rm diam} (\mathcal{Y})$; see Lemma \ref{lemma-dual-bd} in Appendix \ref{sec:dua-appen}. 
{\rm (iii)} The dual error bound developed in this paper can potentially be applied to study the convergence rates of other algorithms for minimax optimization, making it a valuable contribution in its own right. 
\end{remark}

\subsection{Iteration Complexity of Smoothed PLDA}
\label{subsec:iter}
To establish the main theorem, we first develop the following lemma, which establishes a connection between various iterates gaps and the game-stationarity measure.
\begin{lemma} \label{lemma-episolcol}
        Let $\epsilon\ge 0$ be given. Suppose that
        \[
        \max\left\{\|x^{k+1}-x^{k}\|,\|y^{k}_+(z^{k+1}) - y^k\|,\|x^{k+1}-z^k\|\right\} \leq  \epsilon.
        \]
        Then,  $\left(x^{k+1}, y^{k+1}\right)$ is a $\rho \epsilon$-GS of problem \eqref{eq:problem}, where
\[\rho = \max\left\{(\eta+1+\sigma_1\alpha\beta L)\left(\frac{1}{\alpha}+L\right), r(\zeta+\sigma_2(\eta+1+\sigma_1\alpha \beta L)+\sigma_1)\right\}\] 
 with $\sigma_{1}:=\frac{r}{r-L}$ and $\sigma_{2}:=\frac{2r-L}{r-L}$.
        \end{lemma}
\begin{proof}
By Definition \ref{defi:primal-dual}, we only have to quantify $\|\nabla_{z} d_r(y^{k+1},x^{k+1})\|$ and ${\rm dist}(0,-\nabla_{y} F(x^{k+1}, y^{k+1})+\partial\iota_{\mathcal{Y}}(y^{k+1}))$.
To begin, observe that
\begin{equation*}
\begin{aligned}
& \| \nabla_{z} d_r(y^{k+1},x^{k+1})\| 
\\
=\ & r \|x^{k+1}-x_r (y^{k+1},x^{k+1})\|\\  
\leq\ & r(\|x^{k+1}-x_r(y^{k},z^{k})\| +\|x_r(y^{k},z^{k})-x_r(y^{k+1},z^{k})\|)  +r \|x_r(y^{k+1},z^{k})-x_r(y^{k+1},x^{k+1})\|  \\
\leq\  &  r\left(\zeta\|x^{k+1}-x^k\| +\sigma_2\|y^{k}-y^{k+1}\| + \sigma_1 \|x^{k+1}-z^{k}\| \right) \\
\leq\  &  r\left(\zeta\|x^{k+1}-x^k\| + \sigma_1 \|x^{k+1}-z^{k}\| \right) +r\sigma_2 (\eta\| x^{k+1}-x^k\|+\sigma_1\alpha L\|z^{k+1}-z^k\|+\| y_+^k(z^{k+1})-y^k\|)\\
\leq\ & r(\zeta+\sigma_2(\eta+1+\sigma_1\alpha \beta L)+\sigma_1)\epsilon,
\end{aligned}
\end{equation*}
where the second inequality is due to Proposition \ref{prop:lip} and Lemma \ref{lemma-sollip}, the third is due to Lemma \ref{lemma-dualxy}, and the fourth follows from the update $z^{k+1} = z^k + \beta (x^{k+1} - x^k)$ and the assumption of the lemma. Next, recall that
\begin{equation*}
y^{k+1} = \proj_{\mathcal{Y}}\left(y^k + \alpha \nabla_y F(x^{k+1}, y^k)\right)=\argmin_{y \in \mathcal{Y}}\left\{\|y-y^k - \alpha \nabla_y F(x^{k+1}, y^k)\|^2\right\}.
\end{equation*}
The necessary optimality condition yields
\begin{align*}
0
&\in y^{k+1}-y^k - \alpha \nabla_y F(x^{k+1}, y^k)+\partial \iota_{\mathcal{Y}}(y^{k+1})\notag\\
&=y^{k+1}-y^k - \alpha \nabla_y F(x^{k+1}, y^{k+1})+\partial \iota_{\mathcal{Y}}(y^{k+1}) +\alpha (\nabla_y F(x^{k+1}, y^{k+1})- \nabla_y F(x^{k+1}, y^{k})).
\end{align*}
Since $\partial \iota_{\mathcal{Y}} (y^{k+1})$ is the normal cone to $\mathcal{Y}$ at $y^{k+1}$, we have
\begin{align*}
v
&:=-\frac{1}{\alpha}(y^{k+1}-y^{k})+\nabla_y F(x^{k+1}, y^{k})- \nabla_y F(x^{k+1}, y^{k+1})\in -\nabla_{y} F(x^{k+1}, y^{k+1})+\partial \iota_{\mathcal{Y}}(y^{k+1}).
\end{align*}
It follows that
\begin{align*}
\|v\| \ & \leq \left(\frac{1}{\alpha}+L\right)\|y^{k+1}-y^{k}\|\\
\ &  \leq  \left(\frac{1}{\alpha}+L\right)\left(\eta\|x^{k}-x^{k+1}\|+\sigma_1\alpha L\|z^{k+1}-z^k\|+\|y_+^k(z^{k+1})-y^k\|\right) \\
\ & \leq (\eta+1+\sigma_1\alpha\beta L)\left(\frac{1}{\alpha}+L\right)\epsilon,
\end{align*}
where the first inequality is due to the $L$-Lipschitz continuity of $\nabla_y F(\cdot,\cdot)$ and the second is due to Lemma \ref{lemma-dualxy}. Putting everything together yields the desired result.
\end{proof}

With the help of Propositions \ref{prop:decrease}, \ref{prop:lip}, \ref{prop:dual_eb_KL}, and Lemma \ref{lemma-episolcol}, we are now ready to develop the main theorem, which gives the iteration complexity of smoothed PLDA under various settings.
\begin{theorem}[Main theorem]
\label{thm:general}
 Under the setting of Proposition \ref{prop:decrease}, for any integer $K>0$, there exists an index $k \in\{1,2, \ldots, K\}$ such that the following hold:
 \begin{enumerate}[label=\normalfont(\alph*)]
   \item \rm{(Concave):} Suppose that $F(x,\cdot)$ is concave for all $x \in \mathcal{X}$. If $\max_{y\in \mathcal{Y}} F(\cdot,y)$ is bounded below on $\mathcal{X}$ and $\beta\leq \mathcal{O}(K^{-\frac{1}{2}})$,  then $\left(x^{k+1}, y^{k+1}\right)$ is an $\mathcal{O}(K^{-\frac{1}{4}})$-GS and $z^{k+1}$ is an $\mathcal{O}(K^{-\frac{1}{4}})$-OS of problem \eqref{eq:problem}.
  \item \rm{(K\L\ exponent $\theta\in(\frac{1}{2},1)$):} Suppose that Assumption \ref{ass:kl} holds with $\theta \in (\tfrac{1}{2},1)$. If $\mathcal{X}$ is compact and  $ \beta\leq \mathcal{O}(K^{-\frac{2\theta-1}{2\theta}})$,
   then $\left(x^{k+1}, y^{k+1}\right)$ is an $\mathcal{O}( K^{-\frac{1}{4\theta}})$-GS and $z^{k+1}$ is an $\mathcal{O}( K^{-\frac{1}{4\theta}})$-OS  of problem \eqref{eq:problem}.
  \item \rm{(K\L\ exponent $\theta\in[0,\frac{1}{2}]$):} Suppose that Assumption \ref{ass:kl} holds with $\theta \in [0, \tfrac{1}{2}]$.
 If $\mathcal{X}$ is compact and  $\beta  \leq \frac{ {\rm diam}(\mathcal{Y})^{\frac{2\theta-1}{\theta}}}{224 \alpha r\omega_2^2}$ when $\theta \in (0,\tfrac{1}{2}]$ {\rm (}resp., $\beta  \leq \tfrac{ 1}{224 \alpha r\omega_1^2}$ when $\theta=0${\rm )}, then $\left(x^{k+1}, y^{k+1}\right)$ is an $\mathcal{O}(K^{-\frac{1}{2}})$-GS and $z^{k+1}$ is an $\mathcal{O}(K^{-\frac{1}{2}})$-OS of problem \eqref{eq:problem}.
  \end{enumerate}
  
\end{theorem}

\begin{remark}\label{rmk-main}
{\rm (i)} Note that concavity alone is not sufficient to guarantee the K\L{} property. An example can be found in \citep[Theorem 36]{bolte2010characterizations}. Therefore, in Theorem \ref{thm:general}, we need to treat the concave case separately.
{\rm (ii)} When the dual function is concave, 
the results in cases (b) and (c) of Theorem \ref{thm:general} remain valid under the weaker assumption that the K{\L} property holds locally around any GS of problem \eqref{eq:problem}, cf. Assumption \ref{ass:kl}. The proof follows that in \citep{zhang2020single} and is omitted here.
{\rm (iii)}
The compactness assumption on $\mathcal{X}$ implies that 
$p_r$
is bounded below.  
For the case where the dual function is concave, we can relax this assumption to that of the lower boundedness of $\max_{y\in \mathcal{Y}} F(\cdot,y)$ on $\mathcal{X}$. The latter allows for the possibility that $\mathcal{X}$ is unbounded and is in line with the assumptions made in the literature on smooth nonconvex-concave minimax optimization; see, e.g., \citep{zhang2020single} and \citep{yang2022faster}.
\end{remark}

\begin{remark} \label{rmk:gs-os}
For smooth nonconvex-concave problems, the work \citep{lin2020gradient} introduces a two-timescale GDA algorithm that computes an $\epsilon$-OS  with an iteration complexity of $\mathcal{O}(\epsilon^{-6})$. Such an $\epsilon$-OS  can then be converted into an $\epsilon$-GS  with an additional cost of $\mathcal{O}(\epsilon^{-2})$ (depending on the specific algorithm employed). 
By contrast, Theorem \ref{thm:general} shows that smoothed PLDA can find both an $\epsilon$-GS and an $\epsilon$-OS of a structured nonsmooth nonconvex-concave problem with the same iteration complexity of $\mathcal{O}(\epsilon^{-4})$. Furthermore, when the dual problem satisfies the K\L{} property with exponent $\theta\in[0,\frac{1}{2}]$, 
smoothed PLDA has the optimal iteration complexity of $\mathcal{O}(\epsilon^{-2})$ for finding both an $\epsilon$-GS and an $\epsilon$-OS; see Footnote 1.
 \end{remark}

\begin{proof}

As mentioned in Section \ref{sec:framework}, the global convergence rate of smoothed PLDA depends on the specific regime of the growth power $v$ in \eqref{eq:step_dual}. Let us begin by considering cases (a) and (b), i.e., either the dual function is concave or the dual problem possesses the K\L{} property with exponent $\theta\in(\frac{1}{2},1)$.

In view of the basic descent estimate in Proposition \ref{prop:decrease}, we consider the following two scenarios separately:\\
(1) There exists a $ k \in\{0,1, \ldots, K-1\}$ such that
\begin{align*}
\ &\frac{1}{2}\max \left\{\frac{3}{8\lambda}\|x^{k}-x^{k+1}\|^{2}, \frac{1}{8\alpha}\|y^{k}-y_{+}^{k}(z^{k+1})\|^2 , \frac{4r}{7\beta}\|z^{k}-z^{k+1}\|^{2}\right\}\\
\leq \ & 14r\beta\|x_r(y(z^{k+1}),z^{k+1})-x_r(y_{+}^{k}(z^{k+1}), z^{k+1})\|^{2}.
\end{align*}
(2) For any $ k \in\{0,1, \ldots, K-1\}$, we have
\begin{align*}
\ &\frac{1}{2}\max \left\{\frac{3}{8\lambda}\|x^{k}-x^{k+1}\|^{2}, \frac{1}{8\alpha}\|y^{k}-y_{+}^{k}(z^{k+1})\|^2 , \frac{4r}{7\beta}\|z^{k}-z^{k+1}\|^{2}\right\}\\
\ge\ & 14r\beta\|x_r(y(z^{k+1}),z^{k+1})-x_r(y_{+}^{k}(z^{k+1}), z^{k+1})\|^{2}.
\end{align*}

\textbf{Scenario (1)}. For the case where the K{\L} exponent $\theta \in (\tfrac{1}{2},1)$, we deduce from Proposition \ref{prop:dual_eb_KL}(b) that 
\begin{align*}
    \|y^{k}-y_{+}^{k}(z^{k+1})\|^2  & \leq 224r\alpha\beta\|x_r(y(z^{k+1}),z^{k+1})-x_r(y_{+}^{k}(z^{k+1}), z^{k+1})\|^{2}  \\ &\leq  224r\alpha\beta \omega_2^2 \|y^{k}-y_{+}^{k}(z^{k+1})\|^{\frac{1}{\theta}},  
\end{align*}
which gives $\|y^{k}-y_{+}^{k}(z^{k+1})\|\leq \rho_1\beta^{\frac{\theta}{2\theta-1}}$ with $\rho_1 := (224r\alpha\omega_2^2)^{\frac{\theta}{2\theta-1}}$. Additionally, using the update $z^{k+1} = z^k + \beta( x^{k+1} - z^k )$ and Proposition \ref{prop:dual_eb_KL}(b), we have
\begin{align*}
\|x^{k+1}-z^{k}\|^{2}  \ & =\frac{1}{\beta^2}\|z^{k+1}-z^{k}\|^{2}\\
\ &  \leq 49\|x_r(y(z^{k+1}),z^{k+1})-x_r(y_{+}^{k}(z^{k+1}), z^{k+1})\|^{2} \\
\ & \leq  49 \omega_2^2 \|y^{k}-y_{+}^{k}(z^{k+1})\|^{\frac{1}{\theta}}\leq \rho_2^2\beta^{\frac{1}{2\theta-1}},
\end{align*}
where $\rho_2 := \sqrt{49\omega_2^2\rho_1^{\frac{1}{\theta}}}$. 
Lastly, we have
\begin{align*}
    \|x^{k+1}-x^k\|^2 \ & \leq {\frac{224}{3}r\beta\lambda}\|x_r(y(z^{k+1}),z^{k+1})-x_r(y_{+}^{k}(z^{k+1}), z^{k+1})\|^{2} \\
   \  &\leq {\frac{224}{3}r\omega^2_2\beta\lambda}\|y^{k}-y_{+}^{k}(z^{k+1})\|^{\frac{1}{\theta}} \leq \rho_3^2 \beta^{\frac{2\theta}{2\theta-1}},  
\end{align*}
where $\rho_3 := \sqrt{{\frac{224}{3}r\omega_2^2\lambda}\rho_1^{\frac{1}{\theta}}}$. 
Combining the above inequalities, we have
\begin{align*}
& \max \left\{\|x^{k}-x^{k+1}\|,\|y^{k}-y_{+}^{k}(z^{k+1})\|,\|z^{k}-x^{k+1}\|\right\} \leq  \max\left\{ \rho_1\beta^{\frac{\theta}{2\theta-1}}, \rho_2\beta^{\frac{1}{4\theta-2}}, \rho_3 \beta^{\frac{\theta}{2\theta-1}}\right\}. 
\end{align*}
It then follows from Lemma \ref{lemma-episolcol}
 that $\left(x^{k+1}, y^{k+1}\right)$ is a $\rho\cdot\max\{ \rho_1\beta^{{\theta}/{(2\theta-1)}},$ $ \rho_2\beta^{1/{(4\theta-2)}}, \rho_3\beta^{{\theta}/{(2\theta-1)}}\}$-GS of problem \eqref{eq:problem}. 
 
 For the concave case, we can replace the primal-dual quantity $x_r( y(z^{k+1}), z^{k+1} )$ by the primal quantity $x_r^\star( z^{k+1} )$ on the right-hand side of the inequality for Scenario (1) and apply Lemma \ref{lemma-dual-bd} in Appendix \ref{sec:dual} (so that $\theta$ and $\omega_2$ are replaced by $1$ and $\kappa = \tfrac{1 + \alpha L \sigma_2 + \alpha L}{\alpha (r-L)} \cdot \textrm{diam}(\mathcal{Y})$, respectively) to conclude that $\left(x^{k+1}, y^{k+1}\right)$ is a $\max\{ \rho_1\beta, \rho_2\beta^{1/2}, \rho_3 \beta\}$-GS of problem \eqref{eq:problem}.

Furthermore, observe that for the case where $\theta \in (\tfrac{1}{2}, 1)$, the optimization-stationarity measure of $z^{k+1}$ satisfies
\begin{equation}
\label{eq:os-gs}
\begin{aligned}
    & \|z^{k+1}-x_r^\star(z^{k+1})\| \\
    \leq\ &  \|z^{k+1}-x^{k+1}\|+\| x^{k+1} - x_r(y^k,z^k) \| + \| x_r(y^k,z^k) - x_r(y^k,z^{k+1}) \|\\
    \ & + \|x_r(y^k,z^{k+1})-x_r(y_{+}^{k}(z^{k+1}), z^{k+1})\|+ \|x_r(y_{+}^{k}(z^{k+1}), z^{k+1})-x_r^\star(z^{k+1})\|\\
     \leq\ &  (1+\sigma_1)\|z^{k}-x^{k+1}\| + \zeta\|x^{k}-x^{k+1}\| + \sigma_2 \|y^k-y_{+}^{k}(z^{k+1})\|+ \omega_2 \|y^k-y_{+}^{k}(z^{k+1})\|^{\frac{1}{2\theta}} \\
    \leq\ & \max\left\{\left(\rho_2(1+\sigma_1)+ \omega_2 \rho_1^\frac{1}{2\theta}\right)\beta^{\frac{1}{4\theta-2}},(\zeta\rho_3+\sigma_2\rho_1) \beta^{\frac{\theta}{2\theta-1}} \right\}, 
\end{aligned}
\end{equation}
where the second inequality follows from the update $z^{k+1} = z^k + \beta (x^{k+1} - z^k)$, Proposition \ref{prop:lip}, Corollary \ref{prop:dual_eb_KL2}, Lemma \ref{lemma-sollip}, and the fact that $0 < \beta \le \tfrac{1}{28}$.
It is evident that the dependence on $\beta$ is identical to that of the game-stationarity measure of $(x^{k+1}, y^{k+1})$.
For the concave case, the optimization-stationarity measure of $z^{k+1}$ can be bounded similarly.

\textbf{Scenario (2)}. Proposition \ref{prop:decrease} implies that for any $k \in\{0,1, \ldots, K-1\}$, we have 
\begin{align}
\label{eq:suff-decrease}
\Phi_r^k-\Phi_r^{k+1} 
\ge \frac{3}{16\lambda}\|x^{k}-x^{k+1}\|^{2}+\frac{1}{16\alpha}\|y^{k}-y_{+}^{k}(z^{k+1})\|^2 +\frac{2r}{7\beta}\|z^{k}-z^{k+1}\|^{2}.
\end{align}
The assumptions in case (a)  imply that 
\[ \min_{x\in\mathcal{X}}\max_{y\in\mathcal{Y}} F_r(x,y,z) = \max_{y\in\mathcal{Y}} \min_{x\in\mathcal{X}} F_r(x,y,z) \]
for all $z \in \mathbb{R}^n$ (cf.~the proof of~\citep[Lemma B.7]{zhang2020single}) and lead to the lower boundedness of $p_r$, while the assumptions in case (b) directly imply that $p_r$ is lower bounded.
In both cases, there exists a $\underline{\Phi} > -\infty$ such that for all $x \in \mathcal{X}$, $y \in \mathcal{Y}$, and $z \in \mathbb{R}^n$,
\begin{align*}
& \Phi_r(x, y, z)  = p_r(z)+(F_r(x,y,z)-d_r(y, z))+(p_r(z)-d_r(y, z))\geq p_r(z)\geq \underline{\Phi} >-\infty.
\end{align*}
It follows that
    \begin{align*}
    &\Phi_r^0-\underline{\Phi} \geq \sum_{k=0}^{K-1}\Phi_r(x^k,y^k,z^k)-\Phi_r(x^{k+1},y^{k+1},z^{k+1}) \\
    \geq \ & \sum_{k=0}^{K-1} \min\left\{\frac{3}{16\lambda},\frac{1}{16\alpha},\frac{2\beta r}{7}\right\} \left(\|x^{k}-x^{k+1}\|^{2}+\|y^{k}-y_{+}^{k}(z^{k+1})\|^2 + \|z^{k}-x^{k+1}\|^{2} \right),
    \end{align*}
    where the last inequality is due to \eqref{eq:suff-decrease} and the update $z^{k+1}=z^{k}+\beta(x^{k+1}-z^k)$.
    In particular, there exists a $k \in\{0,1, \ldots, K-1\}$ such that
    \[
    \max \left\{\|x^{k}-x^{k+1}\|^{2},\|y^{k}-y_{+}^{k}(z^{k+1})\|^{2}, \|x^{k+1}-z^{k}\|^{2}\right\} \leq \frac{\Phi_r^0-\underline{\Phi}}{\min\left\{\frac{3}{16\lambda},\frac{1}{16\alpha},\frac{2\beta  r}{7}\right\}  K}.
    \]
Based on Lemma \ref{lemma-episolcol} and the fact that $0 < \beta \le \tfrac{1}{28}$,  we conclude that $\left(x^{k+1}, y^{k+1}\right)$ is an $\mathcal{O}(\sqrt{1/{K\beta}})$-GS of problem \eqref{eq:problem}. Moreover, by using the same argument as in \eqref{eq:os-gs}, we can show that $z^{k+1}$ is an $\mathcal{O}(\sqrt{1/{K\beta}})$-OS of problem \eqref{eq:problem}. Upon setting $\beta = C K^{-\frac{1}{2}}$ for the concave case and  $\beta = C K^{-\frac{2\theta-1}{2\theta}}$ for the case where the K{\L} exponent $\theta \in (\tfrac{1}{2}, 1)$, with $C$ being a suitably chosen constant, we see that the 
bounds obtained for the optimization-stationarity and game-stationarity measures are of the same order for both Scenarios (1) and (2). This establishes cases (a) and (b).

Next, let us consider case (c). First, suppose that the K{\L} exponent $\theta \in (0, \tfrac{1}{2}]$. 
Once again, we can use Proposition \ref{prop:decrease} to obtain 
\begin{align*}
\Phi_r^k-\Phi_r^{k+1} 
\ge\ &  \frac{3}{8\lambda}\|x^{k}-x^{k+1}\|^{2}+\frac{1}{8\alpha}\|y^{k}-y_{+}^{k}(z^{k+1})\|^2 +\frac{4r}{7\beta}\|z^{k}-z^{k+1}\|^{2}\\
\ & -14r\beta\|x_r(y(z^{k+1}),z^{k+1})-x_r(y_{+}^{k}(z^{k+1}), z^{k+1})\|^{2}.
\end{align*}
Using Proposition \ref{prop:dual_eb_KL}(b) and the boundedness of $\mathcal{Y}$, we have
\begin{equation}\label{dual-eb-bd}
\begin{aligned}
  \|x_r(y(z^{k+1}),z^{k+1})-x_r(y_{+}^{k}(z^{k+1}), z^{k+1})\|
&\leq \omega_2 \|y^{k}-y_{+}^{k}(z^{k+1})\|^{\frac{1}{2\theta}}\\
&=  \omega_2 \|y^{k}-y_{+}^{k}(z^{k+1})\|^{\frac{1}{2\theta}-1} \|y^{k}-y_{+}^{k}(z^{k+1})\|\\
&\leq   \omega_2\cdot {\rm diam}(\mathcal{Y})^{\frac{1}{2\theta}-1}\cdot \|y^{k}-y_{+}^{k}(z^{k+1})\|.
\end{aligned}
\end{equation}
It follows that
\begin{align*}
\Phi_r^k-\Phi_r^{k+1} 
\ge \ &  \frac{3}{8\lambda}\|x^{k}-x^{k+1}\|^{2}+\left(\frac{1}{8\alpha}-14r\beta\omega_2^2\cdot{\rm diam}(\mathcal{Y})^{\frac{1-2\theta}{\theta}}\right) \|y^{k}-y_{+}^{k}(z^{k+1})\|^2  +\frac{4r}{7\beta}\|z^{k}-z^{k+1}\|^{2}.
\end{align*}
Since 
$\beta  \leq \frac{ {\rm diam}(\mathcal{Y})^{\frac{2\theta-1}{\theta}}}{224 \alpha r\omega_2^2}$, we have 
\begin{align*}
&\Phi_r^k-\Phi_r^{k+1} \ge  \frac{3}{8\lambda}\|x^{k}-x^{k+1}\|^{2}+\frac{1}{16\alpha}\|y^{k}-y_{+}^{k}(z^{k+1})\|^2 +\frac{4r}{7\beta}\|z^{k}-z^{k+1}\|^{2}.
\end{align*}
The desired result can then be obtained by following a similar argument as that in the paragraph proceeding \eqref{eq:suff-decrease}.

When the K{\L} exponent $\theta=0$, we can apply Proposition \ref{prop:dual_eb_KL}(a) to derive the inequality \eqref{dual-eb-bd} 
with $\theta$ and $\omega_2$ being replaced by $\tfrac{1}{2}$ and $\omega_1$, respectively.
The desired result then follows by adapting the argument in the preceding paragraph.
\end{proof}

\subsection{Phase Transition Phenomenon} \label{subsec:phase}
Theorem \ref{thm:general} shows that smoothed PLDA achieves the optimal iteration complexity of $\mathcal{O}(\epsilon^{-2})$ when the K\L{} exponent of the dual problem lies in $[0,\tfrac{1}{2}]$. What is more, it reveals a surprising phase transition phenomenon at the boundary $\theta = \tfrac{1}{2}$, where the iteration complexity of the method changes. 
Such a phenomenon can be explained using the dual error bound we developed in Proposition \ref{prop:dual_eb_KL}, which characterizes the inherent tradeoff between the primal and dual updates. Indeed, such an error bound aims to control the negative term in the basic descent estimate in Proposition \ref{prop:decrease}. The ``degree'' of this control depends on the K\L{} exponent of the dual problem, which decides the final iteration complexity of our method. Let us consider the two regimes separately. 
\begin{enumerate}
\item When $\theta \in (0,\tfrac{1}{2}]$, the primal update dominates the optimization process as the dual problem is already nice enough. Conceptually, since the primal update still requires the solution of a strongly convex problem, we cannot surpass the optimal iteration complexity of $\mathcal{O}(\epsilon^{-2})$. 
Quantitatively,  the dual update yields a faster decrease in the Lyapunov function value than the primal one, as we have
\[\|x_r(y(z),z)-x_r(y_+(z),z)\| \leq \omega_2\|y-y_+(z)\|^{\frac{1}{2\theta}}.\]
Thus, the main bottleneck in the basic descent estimate is the three positive  quantities, and we are only able to achieve an  iteration complexity of $\mathcal{O}(\epsilon^{-2})$ by using the following sufficient  decrease inequality: 
\begin{align*}
&\Phi_r^k-\Phi_r^{k+1} 
= \Omega \left( \|x^{k}-x^{k+1}\|^{2}+\|y^{k}-y_{+}^{k}(z^{k+1})\|^2 +\|z^{k}-z^{k+1}\|^{2}\right).
\end{align*}
\item When $\theta \in (\tfrac{1}{2},1)$, the dual update dominates the optimization process. This is because the growth condition of the dual problem is worse compared to that of the strongly convex function in the primal update. Quantitatively, the dual update yields a slower decrease in the Lyapunov function value than the primal one. When we incorporate the dual error bound into the basic descent estimate, it becomes evident that the main challenge lies in managing the quantity $\|y^{k}-y_{+}^{k}(z^{k+1})\|^\frac{1}{\theta}$. In order to effectively balance the three positive quantities in the basic descent estimate, it is necessary to optimally set $\beta = \mathcal{O}(\epsilon^{4\theta})$ in our proof of Theorem \ref{thm:general}. 
\end{enumerate}

The underlying insight of the aforementioned phase transition phenomenon is that the overall iteration complexity of smoothed PLDA is determined by the slower of the primal and dual updates. As we have previously discussed, the dual error bound in Proposition \ref{prop:dual_eb_KL} offers an effective and theoretically justified approach for balancing the primal and dual updates. It is thus natural to ask whether the primal-dual relationship within our dual error bound is optimal or not.

While a complete answer to this question remains elusive, we now show that an approach different from the one we used in Section \ref{sec:dual-error} will lead to a dual error bound that gives suboptimal iteration complexity results. By Lemma \ref{lemma-sollip} in Appendix \ref{sec:lemmas}, we know that 
\begin{equation}\label{pure-dual}
\|x_r(y(z),z)-x_r(y_+(z),z)\| \leq \sigma_2 \|y(z)-y_+(z)\|
\end{equation}
for any $z\in \R^n$ (recall that $y(z) \in \argmax_{y \in \mathcal{Y}} d_r(y,z)$ and $y_+(z) = \proj_{\mathcal{Y}} ( y + \alpha \nabla_y F(x_r(y,z), y) )$). 
This suggests that one may obtain a dual error bound by relating $\|y(z)-y_+(z)\|$ to $\|y-y_+(z)\|$ for any $y \in \mathcal{Y}$.
To implement this approach, we develop the following new K{\L} calculus rule for the max operator, which is motivated by the definition of $y(z)$ and could be of independent interest.


\begin{proposition}[K\L{} exponent  of max operator] 
\label{prop:max_kl}
Suppose that Assumption \ref{ass:kl} holds. 
Then, for any $z\in \R^n$,  the function $-d_r(\cdot, z)+\iota_{\mathcal{Y}}(\cdot)$ possesses  the K\L{} property with exponent $\theta$. Specifically,
for any  $y\in\mathcal{Y}$ and  $z\in \R^n$, we have
\begin{equation*}
\dist(0, -\nabla_y d_r(y,z)+\partial\iota_{\mathcal{Y}}(y)) \ge \mu  \left(\max\limits_{y'\in \mathcal{Y}} d_r(y',z)-d_r(y,z)\right)^{\theta}.
\end{equation*}
\end{proposition}
\begin{proof}
Let $y \in \mathcal{Y}$ and $z \in \mathbb{R}^n$ be arbitrary. We bound
\begin{align*}
\ & \dist(0, -\nabla_y d_r(y,z)+\partial\iota_{\mathcal{Y}}(y))\\ {=} \ & \dist(0, -\nabla_y F_r(x_r(y,z),y,z) +\partial\iota_{\mathcal{Y}}(y)) \\
{\ge} \ & \mu  \left(\max_{y'\in\mathcal{Y}} F(x_r(y,z),y') -F(x_r(y,z),y)\right)^{\theta} \\
{=} \ & \mu  \left(\max_{y'\in\mathcal{Y}} F(x_r(y,z),y') +\frac{r}{2}\|x_r(y,z)-z\|^2-d_r(y,z)\right)^{\theta} \\
\ge \ &\mu \left(\max\limits_{y'\in \mathcal{Y}} \min\limits_{x\in \mathcal{X}} \left\{F(x,y') +\frac{r}{2}\|x-z\|^2\right\} -d_r(y,z) \right)^{\theta}\\
 = \ & \mu \left(\max\limits_{y'\in \mathcal{Y}} d_r(y',z)- d_r(y,z) \right)^{\theta},
\end{align*}
where the first equality holds due to the strong concavity of $-F_r(\cdot,y,z)$ and \citep[Theorem 10.31]{rockafellar2009variational}, the first inequality follows directly from Assumption \ref{ass:kl} by taking $x = x_r(y,z)$, and  the second equality follows from the fact that 
\[
d_r(y,z) = \min_{x\in \mathcal{X}} \left\{ F(x,y) +\frac{r}{2}\|x-z\|^2\right\} = F(x_r(y,z),y) +\frac{r}{2}\|x_r(y,z)-z\|^2.
\]
The proof is then complete.
\end{proof}


To proceed, we make use of  \citep[Theorem 3.7]{drusvyatskiy2021nonsmooth}, which connects the K\L\ property and the slope error bound.
Specifically, let $y \in \mathcal{Y}$ and $z \in \mathbb{R}^n$ be arbitrary. By Lemma \ref{lemma-dualdiff} in Appendix \ref{sec:lemmas}, the function $d_r(\cdot,z)$ is differentiable. Thus, the Fr\'{e}chet and limiting subdifferentials of the function $-d_r(\cdot,z) + \iota_{\mathcal{Y}}(\cdot)$ coincide (see, e.g., \citep{li2020understanding}), and the slope of this function equals $\dist( 0, -\nabla_y d_r(\cdot,z) + \partial \iota_{\mathcal{Y}}(\cdot) )$ (see, e.g.,  \citep{drusvyatskiy2021nonsmooth}). Consequently, we can apply Proposition \ref{prop:max_kl}, \citep[Theorem 3.7]{drusvyatskiy2021nonsmooth}, and the relative error condition of the projected gradient ascent method to get
\begin{equation}\label{pure-dual-bound}
\begin{aligned}
\dist(y_+(z),Y(z)) & \ = \mathcal{O} \left (\dist^{\frac{1-\theta}{\theta}} (0, -\nabla_y d_r(y_+(z),z)+\partial\iota_\mathcal{Y}(y_+(z)) \right)\\
& \ = \mathcal{O} \left( \|y-y_+(z)\|^{\frac{1-\theta}{\theta}}\right).
\end{aligned}
\end{equation}
Combining \eqref{pure-dual-bound} with \eqref{pure-dual} yields the bound
\[ \| x_r( y(z),z ) - x_r( y_+(z),z ) \| = \mathcal{O}\left( \| y - y_+(z) \|^{ \tfrac{1-\theta}{\theta} } \right), \]
which we shall refer to as the pure dual error bound.

Let us now compare the exponents of the dual error bound in Proposition \ref{prop:dual_eb_KL} and the pure dual error bound; see Figure \ref{fig:power}.
When $\theta \in (\tfrac{1}{2},1)$, the exponent of the former is smaller; when $\theta \in (0,\frac{1}{2}]$, the opposite is true. However, as we have noted earlier, the exponent of the dual error bound does not affect the final iteration complexity when $\theta \in (0,\tfrac{1}{2}]$,  as the primal update dominates the optimization process. Thus, we see that for the purpose of studying the iteration complexity of smoothed PLDA, the pure dual error bound is inferior to the dual error bound in Proposition \ref{prop:dual_eb_KL}.

\begin{figure}[ht]
    \centering
    \includegraphics[width=0.5\textwidth]{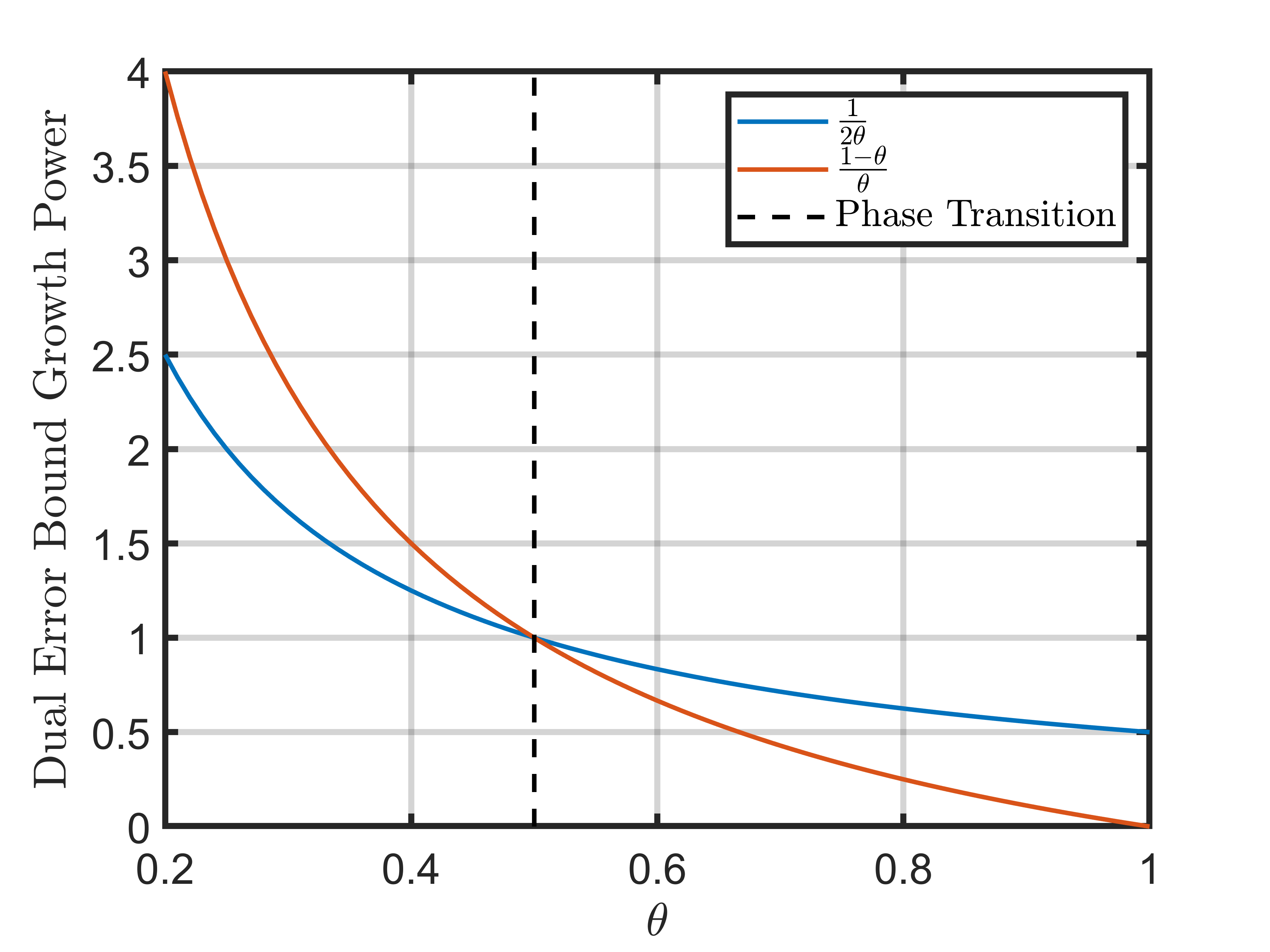}
    \caption{Comparison between the dual error bound (Proposition \ref{prop:dual_eb_KL}) and the pure dual error bound. }
    \label{fig:power}
\end{figure}

\section{Verification of K\L{} Property}
\label{sec:verify}
As we have seen from the last section, the  K\L{} property (Assumption \ref{ass:kl}) is crucial to establishing the iteration complexity of our proposed smoothed PLDA. In this section, we show that  Assumption \ref{ass:kl} holds when the dual function is linear, i.e., problem \eqref{eq:problem} takes the form
\begin{equation}
\label{eq:linear}
\min_{x \in \mathcal{X}} \max_{y \in \mathcal{Y}}
\left\{F(x,y)=y^\top G(x)\right\},
\end{equation}
where $G: \mathbb{R}^n \rightarrow \mathbb{R}^d$  is an arbitrary mapping
and $\mathcal{Y}$ (assumed to be nonempty) is a polytope defined by
\[
\mathcal{Y}=\{y\in\R^d: a_{i}^\top y \leq b_{i},\ i\in [l]\},
\] 
with $l \in \mathbb{N}$ and $a_i \in \mathbb{R}^d$, $b_i \in \mathbb{R}$ for $i \in [l]$.

\begin{theorem}[K\L{} exponent of linear dual problem] 
\label{cor:linear}Consider problem \eqref{eq:linear}. Suppose there exists a $\delta>0$ such that for any $x \in \mathcal{X}$ and $y^{\star}(x) \in \argmax_{y \in \mathcal{Y}} y^\top G(x)$, we have $\lambda_i(x) \ge \delta$ for all $i\in \mathcal{I}(x)$, where $\mathcal{I}(x)=\{i:a_{i}^\top y^\star(x) = b_{i}\}\subseteq[l]$ is the active index set and $\lambda_i(x)$ is the dual variable associated with the constraint $a_{i}^\top y - b_{i}\leq 0$ for $i \in [l]$.
Then,
Assumption \ref{ass:kl} holds with $\theta = 0$.
\end{theorem}

\begin{proof}
Our goal is to show that there exists a $\mu>0$ satisfying
\begin{equation*}
\dist(0, -G(x)+\partial\iota_{\mathcal{Y}}(y))\ge\mu \quad \text{for all}\  x \in \mathcal{X}, \ y \in \mathcal{Y} \setminus \mathcal{Y}^\star(x),
\end{equation*}
where $\mathcal{Y}^{\star}(x) = \argmax_{y' \in \mathcal{Y}} F(x,y') = \left\{ y \in \mathcal{Y} : \max_{y' \in \mathcal{Y}} F(x,y') \le F(x,y) \right\}$. In view of the equivalence between the weak sharp minimum property and the K{\L} property with exponent $\theta=0$ for convex functions \citep[Theorem 5]{bolte2017error}, it suffices to show that
\begin{align*}
\mu\cdot\dist\left(y, \mathcal{Y}^{\star}(x)\right)\leq \max\limits_{y'\in \mathcal{Y}} F(x,y') - F(x,y)\quad \text{for all}\ x\in \mathcal{X},\ y\in\mathcal{Y}.
\end{align*}
Towards that end, let $x \in \mathcal{X}$ be arbitrary and consider the following linear programming problem:
 \begin{equation}
    \begin{array}{cll}
     \max\limits_{y'\in\R^d} & F(x,y') \\ 
        \st & a_i^\top y'-b_i \leq 0, \quad i \in [l].
    \end{array} 
    \label{y-linear-P}
    \end{equation}
For any $\bar{y} \in \mathcal{Y}^{\star}(x)$, using the  KKT condition of \eqref{y-linear-P}, we can find $\lambda_{i}(x) \geq 0$ for  $i \in [l]$ such that
\begin{equation}\label{y-linear-kkt}
\left\{
\begin{array}{lr}
-G(x)+\sum_{i=1}^{l} \lambda_{i}(x) a_{i}=0,\\
\lambda_{i}(x)\cdot(a_{i}^\top \bar{y}-b_{i})=0, \quad i\in[l].
\end{array}
\right.
\end{equation}
Moreover, if we let $\mathcal{L}(x):=\left\{y'\in\R^d:(\lambda_{i}(x) a_{i})^\top y'=\lambda_{i}(x) b_{i},\ i\in[l]\right\}$,  then
$
\mathcal{Y} \cap \mathcal{L}(x)=\mathcal{Y}^{\star}(x).
$


Now, observe that as $x$ varies over $\mathcal{X}$, there are at most $2^l$ different sets of active constraints in problem \eqref{y-linear-P}. This, together with the complementarity condition in \eqref{y-linear-kkt}, implies that there are at most $2^l$ different polyhedra in the set $\{ \mathcal{L}(x) : x \in \mathcal{X} \}$. Thus, by  the linear regularity property of polyhedral sets \citep[Corollary 5.26]{bauschke1996projection},  there exists an absolute constant $\gamma_0>0$ such that for all $y\in\mathcal{Y}$,
\begin{equation}\label{y-linear-key-trans}
\begin{aligned}
\dist(y,\mathcal{Y}^{\star}(x))=\dist(y, \mathcal{Y} \cap \mathcal{L}(x))   \leq \gamma_0\cdot \dist(y, \mathcal{L}(x)).
\end{aligned}
\end{equation}
Similarly, by Hoffman's error bound \citep{hoffman1952approximate},  there exists an absolute constant $\gamma_1>0$ such that for all $y\in\mathcal{Y}$,
\begin{equation}\label{y-linear-key-hoff}
{\rm dist}(y, \mathcal{L}(x)) \leq \gamma_1\cdot \sum_{\lambda_i(x)>0}\left|a_{i}^\top y-b_{i}\right|  = \gamma_1\cdot \sum_{\lambda_i(x)>0}\frac{1}{\lambda_i(x)}\left|\lambda_{i}(x)\cdot(a_{i}^\top y-b_{i})\right|
.
\end{equation}
Since $\lambda_i(x)\ge \delta$ if $\lambda_i(x)\neq0$ by assumption, we get from \eqref{y-linear-key-trans} and \eqref{y-linear-key-hoff} that
\begin{equation}\label{eq:kl0-key1}
{\rm dist}\left(y,\mathcal{Y}^{\star}(x)\right)\leq \frac{\gamma}{\delta}\cdot \sum_{i=1}^{l}|\lambda_{i}(x)\cdot(a_{i}^\top y-b_{i})|,
\end{equation}
where $\gamma:=\gamma_0\gamma_1$.
In addition, we have
\begin{equation}\label{eq:kl0-key2}
\begin{aligned}
& \sum_{i=1}^{l}|\lambda_{i}(x)\cdot(a_{i}^\top y-b_{i})|
=-\sum_{i=1}^{l} \lambda_{i}(x)\cdot(a_{i}^\top y-b_{i})\\
= &\ -y^\top G(x) +\sum_{i=1}^{l} \lambda_{i}(x)\cdot a_{i}^\top \bar{y} = -F(x,y)+ \max_{y' \in \mathcal{Y}} F(x,y'),
\end{aligned}
\end{equation}
where the second and last equalities follow from \eqref{y-linear-kkt}. Putting \eqref{eq:kl0-key1} and \eqref{eq:kl0-key2} together yields 
\begin{align*}
\ &\mu\cdot{\rm dist}\left (y,\mathcal{Y}^{\star}(x)\right)  \leq \max_{y'\in\mathcal{Y}} F(x,y')-F(x,y)
\end{align*}
 for all $y \in \mathcal{Y}$, 
where $\mu=\frac{\delta}{\gamma}$. The proof is then complete.
\end{proof}

\begin{corollary}[Max-structured problem]\label{coro:linear-2}
Consider problem \eqref{eq:linear}, where $\mathcal{Y} = \{ y \in \mathbb{R}^d: \sum_{i=1}^d y_i = 1, \ y \ge 0 \}$ is the standard simplex. Suppose there exists a $\delta>0$ such that for all $x\in\mathcal{X}$ and 
$j\in \mathcal{I}(x)$, 
\begin{equation}\label{regular-condition-linear}
\max_{i\in [d]}\, G_i(x)\ge G_j(x)+\delta.
\end{equation}
Then, Assumption \ref{ass:kl} holds with $\theta = 0$.
\end{corollary}

\begin{proof}
Let $x \in \mathcal{X}$ and $y^{\star}(x) \in \argmax_{y \in \mathcal{Y}} y^\top G(x)$ be arbitrary. Since $\mathcal{Y}$ is the standard simplex, the KKT condition \eqref{y-linear-kkt} implies that
\begin{align*}
G(x)
=(u(x)-\lambda_1(x),\ldots,u(x)-\lambda_{d}(x)),
\end{align*}
where $\lambda_i(x) \ge 0$ (resp., $u(x) \in \mathbb{R}$) is the dual variable associated with the constraint $y_i\ge 0$ for $i \in [d]$ (resp., $\sum_{i=1}^d y_i= 1$). 
If $y_i^{\star}(x) > 0$ for some $i \in [d]$, then $\lambda_i(x)=0$, which implies that $\max_{i\in [d]} G_i(x)=u(x)$. On the other hand, condition \eqref{regular-condition-linear} implies that for any $j \in [d]$ with $y_j^{\star}(x) = 0$, we have
\[\lambda_j(x)=u(x)-(u(x)-\lambda_j(x))=\max_{i\in [d]}\, G_i(x)- G_j(x)\ge\delta.\]
Therefore, by applying Theorem \ref{cor:linear}, we conclude that problem \eqref{eq:linear} satisfies Assumption \ref{ass:kl} with $\theta = 0$.
\end{proof}

\begin{remark} \label{rmk:linear}
{\rm (i)} The results in Theorem \ref{cor:linear} and Corollary \ref{coro:linear-2}, which require minimal assumption on the mapping $G$, concern the K{\L} exponent of problem~\eqref{eq:linear} and do not depend on the algorithm used to solve the problem. However, if we are interested in solving problem~\eqref{eq:linear} using our smoothed PLDA algorithm, then the mapping $G$ needs to possess additional structure, so that the objective function $F$ satisfies Assumption~\ref{ass:basic}. In particular, if $\mathcal{Y}$ is an arbitrary polytope, then the mapping $G$ should be continuously differentiable and have a Lipschitz continuous Jacobian. If $\mathcal{Y}$ is the standard simplex (which is the case in the max-structured problem), then the mapping $G$ can take the more general form $G(x)=(h_1(c_1(x)),\ldots, h_d(c_d(x)))$, where $h_i:\R^m\rightarrow \R$ is convex Lipschitz and $c_i:\R^n\rightarrow \R^m$ is continuously differentiable with a Lipschitz continuous Jacobian for $i \in [d]$.
{\rm (ii)} If the assumptions in Theorem \ref{cor:linear} and Corollary \ref{coro:linear-2} hold only locally around every GS of problem \eqref{eq:linear}, then it is still possible to show that Assumption~\ref{ass:kl} holds with $\theta=0$ locally around every GS of problem \eqref{eq:linear}. Such a local property is already sufficient to yield a convergence rate guarantee similar to that in Theorem \ref{thm:general} for smoothed PLDA when applied to problem \eqref{eq:linear}; cf.~Remark \ref{rmk-main}{\rm (ii)}.
{\rm (iii)} The work \citep{zhang2020single} establishes a dual error bound for the max-structured problem under the assumptions that the primal function satisfies the gradient Lipschitz continuity condition and certain strict complementarity condition (see \citep[Assumption 3.5]{zhang2020single}) holds. It can be shown that the latter implies condition \eqref{regular-condition-linear}  in Corollary \ref{coro:linear-2}. However, it should be noted that the proof technique used in \citep{zhang2020single} cannot be extended to the nonsmooth,  possibly even discontinuous, setting considered here. 
\end{remark}

\section{Quantitative Relationships among Different Stationarity Concepts}
\label{sec:sta}

A fundamental question in the study of minimax optimization is how to define the concept of stationarity. One approach is to consider various natural optimality conditions of the minimax problem and extract from them the corresponding stationarity concepts. In the nonconvex-nonconcave setting, the different stationarity concepts obtained via this approach may not coincide, and their relationships are still not well understood. In this section, we aim to elucidate the relationships among several well-known stationarity concepts for problem \eqref{eq:problem}. Interestingly, the dual error bound we developed in Corollary \ref{prop:dual_eb_KL2} plays an important role in obtaining our results.

To begin, let us consider the following exact stationarity concepts for problem  \eqref{eq:problem}:
\begin{definition}[see, e.g.,
\citep{jin2020local,razaviyayn2020nonconvex}]
\label{def:sta} 
The point $(x^\star, y^\star)\in \mathcal{X}\times \mathcal{Y}$ is called a
\begin{enumerate}[label={\rm (\alph*)}]
\item {\it minimax point} {\rm (MM)} of problem \eqref{eq:problem} if \[F(x^{\star}, y) \leq F(x^{\star}, y^{\star}) \leq \max _{y^{\prime} \in \mathcal{Y}} F(x, y^{\prime}) \quad \text{for any } (x,y)\in\mathcal{X}\times\mathcal{Y};\]
\item {\it game-stationary} point {\rm (GS)} of problem \eqref{eq:problem} if 
\[
0\in \partial_{x} F(x^\star, y^\star)+\partial\iota_{\mathcal{X}}(x^\star)\quad \text{and}\quad  0\in -\nabla_{y} F(x^\star, y^\star)+\partial\iota_{\mathcal{Y}}(y^\star).
\]
\end{enumerate}
Furthermore, the point $x^\star\in \mathcal{X}$ is called an
\begin{enumerate}[label={\rm (c)}]
\item {\it optimization-stationary} point {\rm (OS)} of problem \eqref{eq:problem} if 
\[
0\in
\partial (f+\iota_{\mathcal{X}})(x^\star)=
\partial\left(\max_{y \in \mathcal{Y}} F(\cdot, y)+\iota_{\mathcal{X}}\right) (x^\star).
\]
\end{enumerate}
\end{definition}
The extreme value theorem guarantees the existence of an MM when $\mathcal{X}$ is compact, even if $F(\cdot,\cdot)$ is nonconvex-nonconcave. In addition,  the weak convexity of $F(\cdot,y)$ and $-F(x,\cdot)$ for any $x \in \mathcal{X}$ and $y \in \mathcal{Y}$ ensures the existence of a GS \citep[Proposition 4.2]{pang2016unified}. 
However, it is worth noting that finding an MM of a nonconvex-nonconcave optimization problem is hard, as it includes the task of finding a global maximum of a nonconcave function as special case.
Also, since
 most applications in machine learning, such as GANs and adversarial training, involve sequential games, the optimization-stationarity concept may be more attractive.

While the above definition of OS is standard, it only characterizes the primal variable $x$. As such, the concept of OS is not directly comparable to that of MM or GS. To circumvent this difficulty, we introduce the concept of an extended OS (eOS). Specifically, the point $(x^\star, y^\star)\in \mathcal{X}\times \mathcal{Y}$ is called an eOS of problem \eqref{eq:problem} if $x^\star$ is an OS and  $F(x^{\star}, y) \leq F(x^{\star}, y^{\star})$ for any $y\in \mathcal{Y}$. From Definition \ref{def:sta}, we can immediately conclude that every MM is an eOS. To gain some intuition on the possible
 relationships among the three types of stationarity points MM, GS, and eOS, it is instructive to consider the following example.
\begin{figure}[ht]
\centering

\tikzset{every picture/.style={line width=0.75pt}} 

\begin{tikzpicture}[x=0.75pt,y=0.75pt,yscale=-0.75,xscale=0.75]

\draw   (67,101.44) .. controls (67,64.68) and (96.8,34.89) .. (133.56,34.89) .. controls (170.32,34.89) and (200.11,64.68) .. (200.11,101.44) .. controls (200.11,138.2) and (170.32,168) .. (133.56,168) .. controls (96.8,168) and (67,138.2) .. (67,101.44) -- cycle ;
\draw   (150.11,101.44) .. controls (150.11,87.64) and (161.31,76.44) .. (175.11,76.44) .. controls (188.92,76.44) and (200.11,87.64) .. (200.11,101.44) .. controls (200.11,115.25) and (188.92,126.44) .. (175.11,126.44) .. controls (161.31,126.44) and (150.11,115.25) .. (150.11,101.44) -- cycle ;
\draw   (112.5,101.44) .. controls (112.5,77.25) and (132.11,57.64) .. (156.31,57.64) .. controls (180.5,57.64) and (200.11,77.25) .. (200.11,101.44) .. controls (200.11,125.64) and (180.5,145.25) .. (156.31,145.25) .. controls (132.11,145.25) and (112.5,125.64) .. (112.5,101.44) -- cycle ;
\draw   (273,103.44) .. controls (273,66.68) and (302.8,36.89) .. (339.56,36.89) .. controls (376.32,36.89) and (406.11,66.68) .. (406.11,103.44) .. controls (406.11,140.2) and (376.32,170) .. (339.56,170) .. controls (302.8,170) and (273,140.2) .. (273,103.44) -- cycle ;
\draw   (280.5,106.07) .. controls (280.5,86.08) and (296.7,69.89) .. (316.68,69.89) .. controls (336.67,69.89) and (352.86,86.08) .. (352.86,106.07) .. controls (352.86,126.05) and (336.67,142.25) .. (316.68,142.25) .. controls (296.7,142.25) and (280.5,126.05) .. (280.5,106.07) -- cycle ;
\draw   (328.5,104.07) .. controls (328.5,84.08) and (344.7,67.89) .. (364.68,67.89) .. controls (384.67,67.89) and (400.86,84.08) .. (400.86,104.07) .. controls (400.86,124.05) and (384.67,140.25) .. (364.68,140.25) .. controls (344.7,140.25) and (328.5,124.05) .. (328.5,104.07) -- cycle ;
\draw   (472,101.44) .. controls (472,64.68) and (501.8,34.89) .. (538.56,34.89) .. controls (575.32,34.89) and (605.11,64.68) .. (605.11,101.44) .. controls (605.11,138.2) and (575.32,168) .. (538.56,168) .. controls (501.8,168) and (472,138.2) .. (472,101.44) -- cycle ;
\draw   (555.11,101.44) .. controls (555.11,87.64) and (566.31,76.44) .. (580.11,76.44) .. controls (593.92,76.44) and (605.11,87.64) .. (605.11,101.44) .. controls (605.11,115.25) and (593.92,126.44) .. (580.11,126.44) .. controls (566.31,126.44) and (555.11,115.25) .. (555.11,101.44) -- cycle ;
\draw   (517.5,101.44) .. controls (517.5,77.25) and (537.11,57.64) .. (561.31,57.64) .. controls (585.5,57.64) and (605.11,77.25) .. (605.11,101.44) .. controls (605.11,125.64) and (585.5,145.25) .. (561.31,145.25) .. controls (537.11,145.25) and (517.5,125.64) .. (517.5,101.44) -- cycle ;

\draw (158,93) node [anchor=north west][inner sep=0.75pt]   [align=left] {MM};
\draw (119,93) node [anchor=north west][inner sep=0.75pt]   [align=left] {GS};
\draw (70,93) node [anchor=north west][inner sep=0.75pt]   [align=left] {eOS};
\draw (122,182) node [anchor=north west][inner sep=0.75pt]    {$( a)$};
\draw (358,96) node [anchor=north west][inner sep=0.75pt]   [align=left] {MM};
\draw (296,96) node [anchor=north west][inner sep=0.75pt]   [align=left] {GS};
\draw (325,144) node [anchor=north west][inner sep=0.75pt]   [align=left] {eOS};
\draw (328,184) node [anchor=north west][inner sep=0.75pt]    {$( b)$};
\draw (528,181) node [anchor=north west][inner sep=0.75pt]    {$( c)$};
\draw (567,93) node [anchor=north west][inner sep=0.75pt]   [align=left] {GS};
\draw (517,93) node [anchor=north west][inner sep=0.75pt]   [align=left] {MM};
\draw (473,93) node [anchor=north west][inner sep=0.75pt]   [align=left] {eOS};

\end{tikzpicture}

    \caption{Possible relationships among the three types of stationarity points MM, GS, and eOS: (a)  $F(x,y)=x^3-2xy-y^2$ with $\mathcal{X}\times\mathcal{Y}=[-1,1]\times [-1,1]$; (b) $F(x,y)=\sin(x)y$ with $\mathcal{X}\times\mathcal{Y}=[-\frac{\pi}{2},\frac{\pi}{2}] \times [-1,1]$;  (c) $F(x,y)=xy$ with $\mathcal{X}\times\mathcal{Y}=\R \times [-1,1]$.}
    \label{fig:staionary_concepts}
\end{figure}
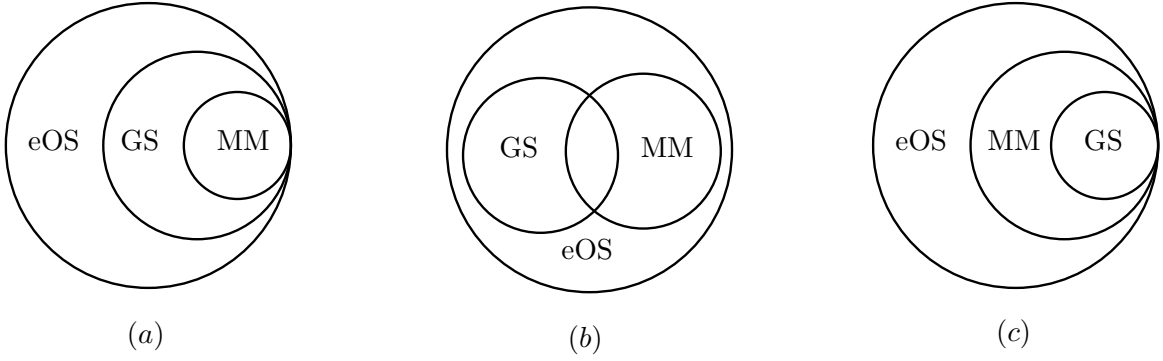
\begin{example}
The following instances of problem \eqref{eq:problem} give rise to the relationships among MM, GS, and eOS shown in Figure \ref{fig:staionary_concepts}. 
\begin{enumerate}[label=\normalfont(\alph*)]
  \item    If $F(x,y)=x^3-2xy-y^2$ and  $\mathcal{X}\times\mathcal{Y}=[-1,1]\times [-1,1]$, then
      \[
      \mathrm{MM} = \left\{(-1,1), (0,0)\right\}, \ 
      \mathrm{GS} =  \mathrm{eOS} = \left\{(-1,1), \left(-\frac{2}{3},\frac{2}{3}\right), (0,0)\right\}.  
      \]
\item If $F(x,y)=\sin(x)y$ and $\mathcal{X}\times\mathcal{Y}=[-\frac{\pi}{2},\frac{\pi}{2}] \times [-1,1]$, then
      \begin{align}
      &\mathrm{MM} = \left\{(0,[-1,1])\right\},\  
      \mathrm{GS} = \left\{\left(-\frac{\pi}{2},-1\right), (0,0), \left(\frac{\pi}{2},1\right)\right\}, \notag\\
      &\mathrm{eOS} = \left\{\left(-\frac{\pi}{2},-1\right), (0,[-1,1]), \left(\frac{\pi}{2},1\right)\right\}. \notag
      \end{align}
\item If $F(x,y)=xy$ and $\mathcal{X}\times\mathcal{Y}=\R \times [-1,1]$, then
      \[
      \mathrm{GS} = \left\{(0,0)\right\}, \ 
      \mathrm{MM} =  \mathrm{eOS} = \left\{(0,[-1,1])\right\}.  
      \]
  \end{enumerate}
\end{example}

As we can see from Figure \ref{fig:staionary_concepts}, the set of GS can fail to capture that of MM. Now, in view of the hardness of computing an MM, let us focus on expounding the relationship between the concepts of game stationarity and optimization stationarity. In fact, we shall address the more general problem of relating the approximate versions of these concepts as introduced in Definition \ref{defi:primal-dual}.

To begin, let $x \in \mathcal{X}$ be arbitrary. Observe that since $f + \iota_{\mathcal{X}}$ is $L$-weakly convex on $\mathcal{X}$ (see Fact \ref{fact-1} in Appendix \ref{sec:lemmas}) and $r>L$ by assumption, we have
\[\operatorname{dist}\left(0, \partial (f+\iota_{\mathcal{X}})\left(\prox_{\frac{1}{r}f+\iota_{\mathcal{X}}}(x)\right)\right) \leq r\left\|\prox_{\frac{1}{r}f+\iota_{\mathcal{X}}}(x)-x\right\|;\]
see, e.g., \citep[Section 2.2]{davis2019stochastic}. It follows from Definition \ref{defi:primal-dual} that for problem \eqref{eq:problem}, a $0$-OS is an OS. Similarly, since $F(\cdot,y) + \iota_{\mathcal{X}}(\cdot)$ is $L$-weakly convex on $\mathcal{X}$ for any $y \in \mathcal{Y}$, we have
\begin{align*}
\operatorname{dist}\left(0 , \partial (F(\cdot,y)+\iota_{\mathcal{X}})\left(\prox_{\frac{1}{r}F(\cdot, y)+\iota_{\mathcal{X}}}(x)\right)\right) 
&\leq r \left\|\prox_{\frac{1}{r}F(\cdot, y)+\iota_{\mathcal{X}}}(x)-x\right\| =\left\|\nabla_{z} d_r(y,x)\right\|,
\end{align*}
where the last equality is due to Lemma \ref{lemma-dualdiff} in Appendix \ref{sec:lemmas}. It follows from Definition \ref{defi:primal-dual} that for problem \eqref{eq:problem}, a $0$-GS is a GS. Based on the above, we may regard
 $\epsilon$-OS and $\epsilon$-GS as smoothed surrogates of OS and GS, respectively.

\begin{remark}
When $F(x,\cdot)$ is concave for any $x\in\mathcal{X}$,
each OS $x^{\star}$ has a corresponding GS whose $x$-coordinate is $x^{\star}$. To be more precise, if $x^{\star}$ is an OS, then we have $x_r^{\star}(x^{\star}) = \prox_{\frac{1}{r}f+\iota_{\mathcal{X}}}(x^\star)=x^\star$. Consequently, for any $y \in \mathcal{Y}$, we can bound the game-stationarity measure of $(x^{\star}, y)$ as
    \begin{align*}
  & \ \frac{1}{r}\|\nabla_{z} d_r(y,x^\star)\|  
  = \|x_r(y,x^\star)-x^\star\| \notag\\
   \leq & \ \|x_r(y,x^\star)-x_r(y_+(x^\star),x^\star)\|+\|x_r(y_+(x^\star),x^\star)-x_r^\star(x^\star)\|  +   \|\prox_{\frac{1}{r}f+\iota_{\mathcal{X}}}(x^\star)-x^\star\|\\
   \leq & \  \sigma_2 \|y-y_+(x^\star)\|+\kappa^{\frac{1}{2}} \|y-y_+(x^\star)\|^{\frac{1}{2}} \\
   = & \ \mathcal{O} \left({\rm dist}( 0, -\nabla_y d_r(y,x^{\star}) + \partial \iota_{\mathcal{Y}}(y))+\dist^{\frac{1}{2}}(0,-\nabla_y d_r(y,x^\star)+\partial\iota_{\mathcal{Y}}(y))  \right),
\end{align*}    
where the second inequality follows from Lemma \ref{lemma-sollip}, Lemma \ref{lemma-dual-bd}, and the fact that $\prox_{\frac{1}{r}f+\iota_{\mathcal{X}}}(x^\star)=x^\star$, and the last equality is due to \citep[Lemma 4.1]{li2018calculus} and Lemma \ref{lemma-dualdiff} in Appendix \ref{sec:lemmas}.   
Now, observe that
\begin{align*}
\max_{y\in \mathcal{Y}} d_r(y,x^\star) & \ =\max_{y\in \mathcal{Y}}\min_{x\in\mathcal{X}} \left\{F(x,y)+\frac{r}{2}\|x-x^\star\|^2\right\} \\ &\ =\min_{x\in\mathcal{X}}\max_{y\in \mathcal{Y}} \left\{F(x,y)+\frac{r}{2}\|x-x^\star\|^2\right\}=\max_{y\in \mathcal{Y}} F(x^\star,y),
\end{align*}
where the second equality is due to the convexity of $F_r(\cdot,y,x^{\star})$ for any $y \in \mathcal{Y}$ (recall that $r>L$ by assumption) and concavity of $F(x,\cdot)$ for any $x \in \mathcal{X}$, and the last equality is from $\prox_{\frac{1}{r}f+\iota_{\mathcal{X}}}(x^\star)=x^\star$.
Since an optimal solution $y^{\star} (x^{\star})$ of the above problem satisfies
\[
0\in -\nabla_y d_r(y^\star(x^\star),x^\star)+\partial\iota_{\mathcal{Y}}(y^\star(x^\star))= -\nabla_{y} F(x^\star, y^\star(x^\star))+\partial \iota_{\mathcal{Y}}(y^\star(x^\star))
\]
and thus also $\|\nabla_{z} d_r(y^\star(x^\star),x^\star)\|=0$, we conclude that $(x^\star, y^\star(x^\star))$ is a GS.
\end{remark}

Now, let us state the main result in this section, which establishes a quantitative, algorithm-independent relationship between the $\epsilon$-game-stationarity and $\epsilon$-optimization-stationarity concepts for any $\epsilon \in [0,1)$. The key tool used is the alternative dual error bound (Corollary \ref{prop:dual_eb_KL2}) presented in Section \ref{sec:dual-error}.
\begin{theorem}
\label{thm-stationary}
Suppose that $(x,y)\in\mathcal{X}\times\mathcal{Y}$ is 
an $\epsilon$-GS of problem \eqref{eq:problem} for some $\epsilon \in [0,1)$. Then, the point $x$ is 
\begin{enumerate}[label=\normalfont(\alph*)]
  \item \rm{(K\L\ exponent $\theta\in[0,1)$):} an $\mathcal{O}(\epsilon^{\min\{1,\frac{1}{2\theta}\}})$-OS of problem \eqref{eq:problem};
\item \rm{(Concave):} an $\mathcal{O}(\epsilon^{\frac{1}{2}})$-OS of problem \eqref{eq:problem}.
  \end{enumerate}
\end{theorem}

\begin{proof}
For case (a), we first compute
\begin{align}\label{GS-OS-key1}
  \|\prox_{\frac{1}{r}f+\iota_{\mathcal{X}}}(x)-x\|
   &= \|x_r^\star(x)-x\|  \notag\\
   &\leq \|x_r^\star(x)-x_r(y_+(x),x)\|+\|x_r(y_+(x),x)-x_r(y,x)\| +\|x_r(y,x)-x\| \notag\\
   &\leq \omega \|y-y_+(x)\|^{\gamma} + \sigma_2 \|y-y_+(x)\|+\frac{1}{r}\|\nabla_{z} d_r(y,x)\|, 
\end{align}
where 
the last inequality follows from Corollary \ref{prop:dual_eb_KL2}, Lemma \ref{lemma-sollip}, and Lemma \ref{lemma-dualdiff} with $\omega$ being equal to $\omega_2$ (resp., $\omega_1$) and $\gamma$ being equal to $\tfrac{1}{2\theta}$ (resp., $1$) when $\theta \in (0,1)$ (resp., $\theta=0$).

Next, we estimate $\|y-y_+(x)\|$ in terms of  the game-stationarity measure of $(x,y)$. 
Let $y_{\#}(x) := \proj_{\mathcal{Y}}(y+\alpha \nabla_y F(x,y))$. By \citep[Lemma 4.1]{li2018calculus}, we have
\[
\|y-y_{\#}(x)\|\leq {\rm dist}(0,-\nabla_{y}F(x, y)+\partial\iota_{\mathcal{Y}}(y)).
\]
Moreover, as $\mathcal{Y}$ is a convex set, we have
\begin{align*}
   &\|y_{\#}(x)-y_+(x)\|\notag\\ 
    =\ &   \|\proj_{\mathcal{Y}}\left(y+\alpha \nabla_y F(x,y)\right)-\proj_{\mathcal{Y}}\left(y+\alpha \nabla_{y}F( x_r(y,x), y )\right)\| \\
    \leq\ &  \alpha \|\nabla_y F(x,y)-\nabla_{y} F(x_r(y, x), y)\|\\
    \leq\ & \alpha L\|x_r(y, x)-x\|,
\end{align*} 
where the first inequality holds due to the nonexpansiveness of $\proj_{\mathcal{Y}}$ and the second inequality is due to the $L$-Lipschitz continuity of $\nabla_{y} F(\cdot, y)$ on $\mathcal{X}$. Putting the above together yields
\begin{align}
   \|y-y_+(x)\|  
   & \leq \|y-y_{\#}(x)\| + \|y_{\#}(x)-y_+(x)\|\notag\\
&\leq {\rm dist}(0,-\nabla_{y} F(x, y)+\partial\iota_{\mathcal{Y}}(y))+\alpha L\|x_r(y, x)-x\|. \label{GS-OS-key2}
\end{align} 
Since $(x,y)\in\mathcal{X}\times\mathcal{Y}$ is an $\epsilon$-GS, we have
\[
\|\nabla_{z} d_r(y,x)\|\leq \epsilon\quad \text{and}\quad {\rm dist}(0,-\nabla_{y} F(x, y)+\partial\iota_{\mathcal{Y}}(y))\leq\epsilon.
\]
It follows that the optimization-stationarity measure of $x$ can be bounded as
\begin{align}
& \|\prox_{\frac{1}{r}f+\iota_{\mathcal{X}}}(x)-x\|\notag\\
 \leq\ & \omega \|y-y_+(x)\|^{\gamma} + \sigma_2 \|y-y_+(x)\|+\frac{1}{r}\|\nabla_{z} d_r(y,x)\|\notag\\
  \leq\ & \omega \left({\rm dist}(0,-\nabla_{y} F(x, y)+\partial\iota_{\mathcal{Y}}(y))+\frac{\alpha L}{r}\|\nabla_{z} d_r(y,x)\|\right)^{\gamma}\notag\\
  & +\sigma_2\left({\rm dist}(0,-\nabla_{y} F(x, y)+\partial\iota_{\mathcal{Y}}(y))+\frac{\alpha L}{r}\|\nabla_{z} d_r(y,x)\|\right)+\frac{1}{r}\|\nabla_{z} d_r(y,x)\|\notag\\
  \leq\ & \omega\left(1+\frac{\alpha L}{r}\right)^{\gamma}\epsilon^{\gamma}+\sigma_2\left(1+\frac{\alpha L}{r}\right)\epsilon+\frac{\epsilon}{r}= \mathcal{O}(\epsilon^{\min\{1,\frac{1}{2\theta}\}}),\notag
\end{align}
where the first inequality follows from \eqref{GS-OS-key1} and the second from \eqref{GS-OS-key2}.
This implies that $x$ is an $\mathcal{O}(\epsilon^{\min\{1,\frac{1}{2\theta}\}})$-OS of problem \eqref{eq:problem}. 

Now, for case (b), we can apply Lemma \ref{lemma-dual-bd} to derive the inequality \eqref{GS-OS-key1} with 
$\omega$ and $\gamma$ being replaced by $\sqrt{\kappa}$ and $\tfrac{1}{2}$, respectively.
The remainder of the proof follows the same argument as that for case (a). 
\end{proof}
\begin{remark}
Theorem \ref{thm-stationary} expands on the findings of Propositions 4.11 and 4.12 in \citep{lin2020gradient} by covering a broader range of scenarios. Specifically, it applies to settings where {\rm (i)} $\mathcal{X}$ is not necessarily the entire space $\mathbb{R}^n$; {\rm (ii)} the primal function is nonsmooth or lacks gradient Lipschitz continuity; {\rm (iii)} the dual problem may not be concave and only satisfies the K\L{} property with exponent $\theta$. Furthermore, our results apply to the problems studied in \citep{zhang2020single,yang2022faster}. 
\end{remark}

\section{Closing Remarks}
\label{sec:conclu}
In this paper, we proposed smoothed PLDA for solving a class of nonsmooth composite nonconvex-nonconcave problems. When the dual function is concave,  we showed that our algorithm can find both an $\epsilon$-GS and an $\epsilon$-OS of the problem in $\mathcal{O}(\epsilon^{-4})$ iterations, which is a first step towards matching the complexity results for smooth minimax problems. Moreover, when the dual problem possesses the K\L{} property with exponent $\theta\in[0,1)$, we showed that our algorithm has an  iteration complexity of $\mathcal{O}(\epsilon^{-2\max\{2\theta,1\}})$. As it turns out, this complexity is determined by the slower of the primal and dual updates in smoothed PLDA and reveals an interesting phase transition phenomenon: When $\theta \in [0,\frac{1}{2}]$, the primal update, which involves solving a strongly convex problem, dominates. As such, we cannot break the optimal iteration complexity of $\mathcal{O}(\epsilon^{-2})$. On the other hand, when $\theta \in (\frac{1}{2},1)$, the dual update dominates and explicitly depends on $\theta$, resulting in an iteration complexity of $\mathcal{O}(\epsilon^{-4\theta})$. The insights gained from our analysis suggest a new algorithm design principle---namely, primal-dual balancing---which holds promise for the design of more efficient algorithms in the context of minimax optimization.

Our work suggests several directions for further study. We mention some of them here.
\begin{enumerate}[label={\rm (\alph*)}]
\item Extend our algorithm and its analysis to the stochastic setting to benefit modern machine learning tasks.
\item Investigate the lower complexity bounds of first-order methods for nonconvex-K\L{} problems and their dependence on the K{\L} exponent  $\theta$. We conjecture that our proposed smoothed PLDA already has the optimal complexity, at least in terms of the dependence on $\theta$.
\item Identify additional structured nonconvex-nonconcave problems that satisfy the  K\L{} property and characterize their K\L{} exponents.
\end{enumerate}

\bibliography{ref}
\bibliographystyle{abbrvnat}

\newpage 
\begin{appendices}
\section {Useful Technical Lemmas} \label{sec:lemmas}
To begin, we introduce the concept of weakly convex functions, which plays an important role in our subsequent analysis.
\begin{definition}[Weak convexity]
\label{defi:weak-convex}
 A function $\ell:\R^n \rightarrow \R$ is said to be $\rho$-weakly convex on a set $\mathcal{X}\subseteq \R^n$ for some constant $\rho \ge 0$ if for any $x, y \in \mathcal{X}$ and $\tau \in [0,1]$, we have
$$
\ell(\tau x+(1-\tau) y) \leq \tau \ell(x)+(1-\tau) \ell(y)+\frac{\rho \tau(1-\tau)}{2}\|x-y\|^{2}.
$$
The above definition is equivalent to the convexity of the function $\ell(\cdot) + \tfrac{\rho}{2} \| \cdot \|^2$ on $\mathcal{X}$.
\end{definition}
\begin{definition}[Proximal mapping]
Let $\ell:\R^n\rightarrow \R \cup \{+\infty\}$ 
be a proper lower-semicontinuous function.  The proximal mapping of $\ell$ with parameter $\mu>0$ at the point $x \in \mathbb{R}^n$ is defined by
\[
\prox_{\mu \ell}(x) = \mathop{\argmin}_{y\in \R^n} \left\{ \ell(y) + \frac{1}{2\mu} \|y-x\|^2\right\}. 
\]
\label{defi:prox}
\end{definition}
By our assumptions on problem \eqref{eq:problem} (recall that $L=L_h L_c$ and $r> L$) and \citep[Lemma 3.2 and Lemma 4.2]{drusvyatskiy2019efficiency}, we have the following useful results:
\begin{fact}\label{fact-1}
The functions $F(\cdot, y)$ for any $y\in\mathcal{Y}$ and $f$ are $L$-weakly convex on $\mathcal{X}$.
\end{fact}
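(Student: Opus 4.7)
The plan is to establish the two claims separately, starting from the definition of weak convexity given just above the statement (i.e., showing that $F(\cdot,y)+\tfrac{L}{2}\|\cdot\|^2$ and $f+\tfrac{L}{2}\|\cdot\|^2$ are convex on $\mathcal{X}$), and then exploiting (a) the composite structure $h_y\circ c_y$ for the primal function and (b) stability of convexity under pointwise suprema for $f$.

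For the first claim, I would fix $y\in\mathcal{Y}$ and work with $\varphi_y(x):=h_y(c_y(x))$. The central tool is a quadratic linearization error bound for $c_y$: since $\nabla c_y$ is $L_c$-Lipschitz on $\mathcal{X}$, the standard descent lemma applied componentwise yields
\[
\|c_y(x)-c_y(x')-\nabla c_y(x')(x-x')\|\le \tfrac{L_c}{2}\|x-x'\|^2\quad\text{for all } x,x'\in\mathcal{X}.
\]
Combining this with the $L_h$-Lipschitz property of $h_y$ gives
\[
\bigl|h_y(c_y(x))-h_y\bigl(c_y(x')+\nabla c_y(x')(x-x')\bigr)\bigr|\le \tfrac{L_hL_c}{2}\|x-x'\|^2=\tfrac{L}{2}\|x-x'\|^2.
\]
The map $x\mapsto h_y(c_y(x')+\nabla c_y(x')(x-x'))$ is convex as a composition of the convex function $h_y$ with an affine map, so any subgradient $\xi\in\partial h_y(c_y(x'))$ produces a subgradient $v:=\nabla c_y(x')^\top\xi$ of this convex surrogate at $x=x'$. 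Plugging the surrogate's subgradient inequality into the sandwich above delivers the weak-convexity inequality
\[
\varphi_y(x)\ge\varphi_y(x')+v^\top(x-x')-\tfrac{L}{2}\|x-x'\|^2,
\]
which is equivalent to $\varphi_y+\tfrac{L}{2}\|\cdot\|^2$ being convex, i.e., $F(\cdot,y)$ being $L$-weakly convex on $\mathcal{X}$.

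For the second claim, I would leverage the first: since each $F(\cdot,y)+\tfrac{L}{2}\|\cdot\|^2$ is convex on $\mathcal{X}$ with the \emph{same} modulus $L$ independent of $y$,
\[
f(x)+\tfrac{L}{2}\|x\|^2=\sup_{y\in\mathcal{Y}}\Bigl\{F(x,y)+\tfrac{L}{2}\|x\|^2\Bigr\}
\]
is a pointwise supremum of a family of convex functions in $x$, hence convex on $\mathcal{X}$. Because $\mathcal{Y}$ is compact and $F(x,\cdot)$ is continuous, the supremum is attained and $f$ is real-valued, so this immediately yields that $f$ is $L$-weakly convex on $\mathcal{X}$.

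There is no real obstacle here; the only care-point is the $L_c$-Lipschitz Jacobian assumption is stated componentwise via $\|\nabla c_y(x)-\nabla c_y(x')\|$, so I would briefly justify the vector-valued descent lemma (e.g., by integrating $t\mapsto \nabla c_y(x'+t(x-x'))(x-x')$ and bounding in norm), after which the chain of inequalities above is routine. The uniformity of $L_h$ and $L_c$ in $y$ is what lets the same modulus $L=L_hL_c$ carry through to the supremum in the second part.
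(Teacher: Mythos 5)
Your proposal is correct and follows essentially the same route as the paper: the paper simply cites \cite[Lemma 3.2, Lemma 4.2]{drusvyatskiy2019efficiency}, and your argument reproduces exactly the standard proof behind those lemmas --- the quadratic linearization bound $\|c_y(x)-c_y(x')-\nabla c_y(x')(x-x')\|\le \tfrac{L_c}{2}\|x-x'\|^2$ combined with the $L_h$-Lipschitz continuity of $h_y$ and a subgradient of the convex surrogate, giving the modulus $L=L_hL_c$. Your treatment of $f$ as a pointwise supremum of the uniformly $L$-weakly convex family $F(\cdot,y)+\tfrac{L}{2}\|\cdot\|^2$ is likewise the intended argument, with attainment guaranteed by the compactness of $\mathcal{Y}$.
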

\begin{fact}\label{fact-2}
Let $y\in\R^d$ and $r>0$ be given. For all $x, \bar{x} \in\mathcal{X}$, we have
\[
-\frac{r^{-1}+L}{2}\|x-\bar{x}\|^{2} \leq F(x,y)-F_{\bar{x},r}(x, y) \leq \frac{-r^{-1} + L}{2}\|x-\bar{x}\|^{2} .
\]
\end{fact}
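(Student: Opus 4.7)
The plan is to unpack the definition of $F_{\bar{x},r}(x,y)$ and reduce the claim to a one-line linearization estimate made possible by the composite structure $F(\cdot,y)=h_y\circ c_y$. Writing out
\[
F(x,y)-F_{\bar{x},r}(x,y)=\bigl[h_y(c_y(x))-h_y\bigl(c_y(\bar{x})+\nabla c_y(\bar{x})^\top(x-\bar{x})\bigr)\bigr]-\frac{r}{2}\|x-\bar{x}\|^2,
\]
the whole question collapses to how well $h_y\circ c_y$ is approximated by the partial linearization $h_y\bigl(c_y(\bar{x})+\nabla c_y(\bar{x})^\top(\cdot-\bar{x})\bigr)$ in its inner argument. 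Once that bracketed quantity is controlled in absolute value, the two-sided bound in the statement follows immediately by adding or subtracting $\tfrac{r}{2}\|x-\bar{x}\|^2$.

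To control the bracketed term, first I would invoke the $L_h$-Lipschitz continuity of $h_y$ from part (a) of the standing assumption to peel off the outer function:
\[
\bigl|h_y(c_y(x))-h_y\bigl(c_y(\bar{x})+\nabla c_y(\bar{x})^\top(x-\bar{x})\bigr)\bigr|\ \le\ L_h\,\bigl\|c_y(x)-c_y(\bar{x})-\nabla c_y(\bar{x})^\top(x-\bar{x})\bigr\|.
\]
This reduces the problem to estimating the linearization error of the smooth map $c_y$, and this is where the $L_c$-Lipschitz continuity of the Jacobian $\nabla c_y$ enters. A standard argument based on the fundamental theorem of calculus (the classical ``descent lemma'' applied componentwise to $c_y$) yields $\|c_y(x)-c_y(\bar{x})-\nabla c_y(\bar{x})^\top(x-\bar{x})\|\le \tfrac{L_c}{2}\|x-\bar{x}\|^2$. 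Chaining these two estimates and using $L=L_h L_c$ produces the absolute bound $\tfrac{L}{2}\|x-\bar{x}\|^2$ on the bracketed term.

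Finally, inserting this two-sided bound into the decomposition above yields
\[
-\frac{L+r}{2}\|x-\bar{x}\|^2\ \le\ F(x,y)-F_{\bar{x},r}(x,y)\ \le\ \frac{L-r}{2}\|x-\bar{x}\|^2,
\]
which is exactly the stated fact. The argument is essentially bookkeeping: the only ingredients are the two Lipschitz hypotheses on $h_y$ and $\nabla c_y$ together with the normalization $L=L_h L_c$, and there is no real obstacle; the references \cite{drusvyatskiy2019efficiency} cited in the excerpt bundle precisely this computation. The same chain of inequalities simultaneously gives the weak convexity claim of Fact~\ref{fact-1} (by taking $r=0$ in the lower bound and noting $-L/2\|x-\bar x\|^2\le F(x,y)-F_{\bar x,0}(x,y)$, so $F(\cdot,y)$ lies above a concave quadratic tangent perturbation, equivalently $F(\cdot,y)+\tfrac{L}{2}\|\cdot\|^2$ is convex).
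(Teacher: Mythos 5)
Your proposal is correct and takes essentially the same route as the paper: the paper establishes Fact~\ref{fact-2} not by an in-text argument but by directly citing Lemmas 3.2 and 4.2 of \cite{drusvyatskiy2019efficiency}, and your computation --- peeling off the outer function via the $L_h$-Lipschitz continuity of $h_y$, bounding the linearization error $\|c_y(x)-c_y(\bar{x})-\nabla c_y(\bar{x})^\top(x-\bar{x})\|\le \tfrac{L_c}{2}\|x-\bar{x}\|^2$ via the $L_c$-Lipschitz Jacobian, and then adding back the quadratic $\tfrac{r}{2}\|x-\bar{x}\|^2$ --- is exactly the argument those cited lemmas encapsulate. Your closing aside deriving Fact~\ref{fact-1} from the $r=0$ case is likewise the standard consequence and is sound.
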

Using Definition \ref{defi:weak-convex} and Fact \ref{fact-1}, we can deduce that $F_r(\cdot,y,z)$ is $(r-L)$-strongly convex for any $y \in \mathcal{Y}$ and $z \in \mathbb{R}^n$.


Lemma \ref{lemma-sollip} and Lemma \ref{lemma-dualdiff} are also essential to  our analysis. The proofs of these lemmas are similar to those of Lemma B.2 and  Lemma B.3 in \citep{zhang2020single}, respectively. For completeness, we present the proofs here.

\begin{lemma}\label{lemma-sollip}
For any $y, y^{\prime} \in \mathcal{Y}$ and $z, z^{\prime} \in \R^n$, we have 
    \begin{align}
        &\|x_r(y, z)-x_r(y, z^{\prime})\| \leq \sigma_{1}\|z-z^{\prime}\|, \label{lip-z}\\
        &\|x_r^\star(z)-x_r^\star(z^{\prime})\| \leq \sigma_{1}\|z-z^{\prime}\|, \label{lip-starz}\\
        &\|x_r(y, z)-x_r(y^{\prime}, z)\| \leq \sigma_{2}\|y-y^{\prime}\|, \label{lip-y}
        \end{align}
     where $\sigma_{1}:=\frac{r}{r-L}$ and $\sigma_{2}:=\frac{2r-L}{r-L}$.
    \end{lemma}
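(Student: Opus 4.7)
The three estimates all rest on the $(r-L)$-strong convexity of $F_r(\cdot,y,z)$ in $x$, which follows from the $L$-weak convexity of $F(\cdot,y)$ (Fact \ref{fact-1}) combined with the regularization $\tfrac{r}{2}\|\cdot-z\|^2$ and the fact that $r>L$. What differs among the three bounds is the perturbation being tracked: \eqref{lip-z} and \eqref{lip-starz} perturb only the prox-center $z$ with the weakly convex integrand fixed, whereas \eqref{lip-y} perturbs the $y$-argument of $F$, which changes the weakly convex integrand itself. I would prove the three inequalities in that order, since \eqref{lip-starz} is completely parallel to \eqref{lip-z} and \eqref{lip-y} is where the real work happens.

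For \eqref{lip-z} I would write the first-order optimality conditions for $x := x_r(y,z)$ and $x' := x_r(y,z')$: there exist $v\in\partial_x F(x,y)$, $v'\in\partial_x F(x',y)$ and $w\in N_{\mathcal{X}}(x), w'\in N_{\mathcal{X}}(x')$ with $v+w+r(x-z)=0$ and $v'+w'+r(x'-z')=0$. Since $F(\cdot,y)+\tfrac{L}{2}\|\cdot\|^2$ is convex, the set-valued map $\partial_x F(\cdot,y)+N_{\mathcal{X}}$ is $(-L)$-hypomonotone, i.e.\ $\langle (v+w)-(v'+w'),\,x-x'\rangle \ge -L\|x-x'\|^2$. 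Substituting the optimality conditions into the hypomonotonicity inequality and rearranging gives $(r-L)\|x-x'\|^2 \le r\langle z-z',\,x-x'\rangle$, and Cauchy--Schwarz delivers $\|x-x'\|\le \tfrac{r}{r-L}\|z-z'\|=\sigma_1\|z-z'\|$. For \eqref{lip-starz} the argument is identical after noting that $f$ is itself $L$-weakly convex on $\mathcal{X}$ (Fact \ref{fact-1}), so that $x_r^{*}(z)=\mathrm{prox}_{\frac{1}{r}f+\iota_{\mathcal{X}}}(z)$ is the prox of an $L$-weakly convex function with prox-parameter $r>L$.

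The main obstacle is \eqref{lip-y}. Because $F(\cdot,y)=h_y\circ c_y$ depends on $y$ through \emph{both} of its factors and no regularity of the subdifferential map $y\mapsto\partial_x F(x,y)$ is assumed, the subgradient-subtraction trick used for \eqref{lip-z} is not available. I plan instead to work purely with function values. Setting $x:=x_r(y,z)$ and $x':=x_r(y',z)$, the $(r-L)$-strong convexity of $F_r(\cdot,y,z)$ at its minimizer $x$ gives $F_r(x',y,z)-F_r(x,y,z)\ge \tfrac{r-L}{2}\|x-x'\|^2$, and the analogous inequality for $F_r(\cdot,y',z)$ at its minimizer $x'$ gives $F_r(x,y',z)-F_r(x',y',z)\ge \tfrac{r-L}{2}\|x-x'\|^2$. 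Adding these (the regularization terms cancel because $z$ is common) produces
\[
(r-L)\|x-x'\|^2 \;\le\; \bigl[F(x',y)-F(x,y)\bigr]\;-\;\bigl[F(x',y')-F(x,y')\bigr].
\]
Using the assumed continuous differentiability of $F(x,\cdot)$, I would rewrite the right-hand side as $\int_0^1 \langle \nabla_y F(x',y_t)-\nabla_y F(x,y_t),\,y-y'\rangle\,dt$ with $y_t:=y'+t(y-y')$, and then bound the integrand using the joint $L$-Lipschitz continuity of $\nabla_y F$ on $\mathcal{X}\times\mathcal{Y}$, which gives $\|\nabla_y F(x',y_t)-\nabla_y F(x,y_t)\|\le L\|x-x'\|$. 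Thus the right-hand side is at most $L\|x-x'\|\|y-y'\|$; dividing through by $\|x-x'\|$ (the case $x=x'$ being trivial) yields $\|x-x'\|\le \tfrac{L}{r-L}\|y-y'\|$, which is dominated by $\sigma_2\|y-y'\|=\tfrac{2(r+L)}{r-L}\|y-y'\|$ and establishes \eqref{lip-y}. The only conceptually subtle point in the whole proof is routing around the nonsmoothness in $x$ for \eqref{lip-y} by converting to function values so that smoothness in $y$ can be exploited; everything else is bookkeeping with strong convexity and Cauchy--Schwarz.
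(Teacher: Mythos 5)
Your proof is correct, and for \eqref{lip-y} it takes a genuinely different --- and sharper --- route than the paper. For \eqref{lip-z} and \eqref{lip-starz} the two arguments are essentially the same fact viewed from two sides: you differentiate (first-order optimality plus $(-L)$-hypomonotonicity of $\partial_x F(\cdot,y)+N_{\mathcal{X}}$), while the paper integrates (it never touches subgradients, instead playing the quadratic-growth inequality $F_r(x,y,z)-F_r(x_r(y,z),y,z)\ge\frac{r-L}{2}\|x-x_r(y,z)\|^2$ off against an explicit computation of $F_r(x,y,z')-F_r(x,y,z)$); both land on $(r-L)\|x-x'\|^2\le r\langle z-z',x-x'\rangle$ and hence the same $\sigma_1$. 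The only caveat on your side is the implicit sum rule $\partial\big(F(\cdot,y)+\iota_{\mathcal{X}}\big)\subseteq \partial_x F(\cdot,y)+N_{\mathcal{X}}$, which is legitimate here since $F(\cdot,y)=h_y\circ c_y$ is locally Lipschitz and weakly convex (hence subdifferentially regular), but which the paper's function-value route avoids entirely. For \eqref{lip-y} the paper also starts from the two quadratic-growth inequalities, but then bounds the symmetric difference using concavity of $F_r(x,\cdot,z)$ on one side and the descent lemma for $\nabla_y F_r(x,\cdot,z)$ on the other; this produces cross terms and a quadratic inequality in $\zeta=\|x_r(y,z)-x_r(y',z)\|/\|y-y'\|$, which after deliberate loosening ($L\le r+L$) is solved to give $\sigma_2=\frac{2(r+L)}{r-L}$. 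You instead observe that after adding the two growth inequalities the proximal terms cancel, leaving $(r-L)\|x-x'\|^2\le\big[F(x',y)-F(x,y)\big]-\big[F(x',y')-F(x,y')\big]$, and you control the right-hand side by the fundamental theorem of calculus along $y_t=y'+t(y-y')$ (valid since $\mathcal{Y}$ is convex and $F(x,\cdot)$ is continuously differentiable there) together with the joint $L$-Lipschitz continuity of $\nabla_y F$; this uses neither concavity in $y$ nor any quadratic inequality and yields the strictly better constant $\frac{L}{r-L}\le\sigma_2$. Your constant is in fact tight --- take $F(x,y)=-\frac{L}{2}x^2+Lxy$ with $\mathcal{X}=\R$, for which $x_r(y,z)=\frac{rz-Ly}{r-L}$ --- so your argument would slightly improve all downstream constants built from $\sigma_2$ (e.g.\ $L_{d_r}$, $\omega$, $\kappa$), though none of the paper's complexity conclusions depends on this looseness.
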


\begin{proof}
From the definition of $F_r$, we know that for any $x\in\mathcal{X}$, $y\in\mathcal{Y}$, and $z,z'\in\R^n$, 
\begin{equation}\label{func-cal}
F_r(x,y,z^{\prime})-F_r(x,y,z)=\frac{r}{2}(\|x-z'\|^2-\|x-z\|^2)=\frac{r}{2}(\|z^{\prime}\|^{2}-2(z^{\prime}-z)^\top x-\|z\|^{2}).
\end{equation}
Fact \ref{fact-1} implies that $F(\cdot, y)$ is $L$-weakly convex for any $y \in \mathcal{Y}$. Therefore, for any $x\in\mathcal{X}$, $y\in\mathcal{Y}$, and $z\in\R^n$, we have
\begin{equation}\label{psi-strongcvx}
F_r(x,y, z)- F_r(x_r(y,z),y,z)
\geq \frac{r-L}{2}\|x-x_r(y,z)\|^{2}.
\end{equation}
Combining \eqref{func-cal} and \eqref{psi-strongcvx} yields
\begin{align}\label{lip-key1}
    &F_r(x_r(y,z),y,z^{\prime})-F_r(x_r(y,z^{\prime}),y,z^{\prime}) \notag\\
    =\ &F_r(x_r(y,z),y,z^{\prime})-F_r(x_r(y,z),y,z)+F_r(x_r(y,z),y,z)-F_r(x_r(y,z^{\prime}),y,z)\, \notag\\
    &-(F_r(x_r(y,z^{\prime}),y, z^{\prime})-F_r(x_r(y,z^{\prime}),y, z)) \notag\\
    \leq\ &\frac{r}{2}(\|z^{\prime}\|^{2}-2(z^{\prime}-z)^\top x_r(y,z)-\|z\|^{2})-\frac{r-L}{2}\|x_r(y,z)-x_r(y,z^{\prime})\|^{2}\,  \notag\\
    &-\frac{r}{2}(\|z^{\prime}\|^{2}-2(z^{\prime}-z)^\top x_r(y,z^{\prime})-\|z\|^{2}) \notag\\
    \leq\ &r(z^{\prime}-z)^\top(x_r(y,z^{\prime})-x_r(y,z))-\frac{r-L}{2}\|x_r(y,z)-x_r(y,z^{\prime})\|^{2}.
    \end{align}
 On the other hand, \eqref{psi-strongcvx} implies that
    \[
    F_r(x_r(y,z),y, z^{\prime})-F_r(x_r(y,z^{\prime}),y,z^{\prime}) \geq \frac{r-L}{2}\|x_r(y,z)-x_r(y,z^{\prime})\|^{2}.
    \]
Combining this inequality with \eqref{lip-key1}, we get
    \[
    (r-L)\|x_r(y,z)-x_r(y,z^{\prime})\|^{2}\leq r(z^{\prime}-z)^\top (x_r(y,z^{\prime})-x_r(y,z)).
    \]
Using the Cauchy-Schwarz inequality, we obtain
    \[
    \|x_r(y,z)-x_r(y,z^{\prime})\| \leq \frac{r}{r-L}\|z-z^{\prime}\|.
    \]
Therefore, we conclude that \eqref{lip-z} holds with $\sigma_1=\frac{r}{r-L}$. Since $f=\max_{y \in \mathcal{Y}} F(\cdot,y)$ is also $L$-weakly convex by  Fact \ref{fact-1}, we can prove that \eqref{lip-starz} holds by using a similar argument as above.

We  now proceed to prove \eqref{lip-y}. Using \eqref{psi-strongcvx}, we have
\begin{align}
&F_r(x_r(y^{\prime}, z),y, z)-F_r(x_r(y, z), y,z) \geq \frac{r-L}{2}\|x_r(y, z)-x_r(y^{\prime}, z)\|^{2},\label{psi-strongcvxineq1} \\
&F_r(x_r(y, z), y^{\prime},z)-F_r(x_r(y^{\prime}, z), y^{\prime},z) \geq \frac{r-L}{2}\|x_r(y, z)-x_r(y^{\prime}, z)\|^{2}\label{psi-strongcvxineq2}.
\end{align}
Moreover, by the $L$-Lipschitz continuity of $\nabla_{y} F_r(x, \cdot, z)$ for any $x \in \mathcal{X}$ and $z \in \mathbb{R}^n$, we have
\begin{equation}
\begin{aligned}\label{psi-yconcave}
 F_r(x_r(y, z), y^{\prime},z)-F_r(x_r(y, z), y,z)\leq &\langle\nabla_{y} F_r(x_r(y, z),y,z), y^{\prime}-y\rangle +\frac{L}{2}\|y-y^{\prime}\|^{2}
\end{aligned}
\end{equation}
and 
\begin{equation}
\begin{aligned}\label{psi-ylip}
 F_r(x_r(y^{\prime}, z), y, z)-F_r(x_r(y^{\prime}, z), y^{\prime},z) \leq & \langle\nabla_{y} F_r(x_r(y^{\prime}, z), y^{\prime}, z), y-y^{\prime}\rangle + \frac{L}{2}\|y-y^{\prime}\|^{2}.
\end{aligned}
\end{equation}
Incorporating \eqref{psi-strongcvxineq1}--\eqref{psi-ylip}, we obtain
\begin{equation*}
\begin{aligned}
& (r-L)\|x_r(y, z)-x_r(y^{\prime}, z)\|^{2}  \\ \leq \ &\langle\nabla_{y} F_r(x_r(y, z), y, z)-\nabla_{y} F_r(x_r(y^{\prime}, z), y^{\prime}, z), y^{\prime}-y\rangle +L\|y-y^{\prime}\|^{2}. 
\end{aligned}
\end{equation*}
Since $\nabla_y F_r(\cdot,\cdot,z) = \nabla_y F(\cdot,\cdot)$ is $L$-Lipschitz for any $z \in \mathbb{R}^n$, we have
\begin{align*}
&\|\nabla_{y} F_r(x_r(y, z), y, z)-\nabla_{y} F_r(x_r(y^{\prime}, z), y^{\prime}, z)\| \notag\\
\leq\ &\|\nabla_{y} F_r(x_r(y, z), y, z)-\nabla_{y}F_r(x_r(y, z), y^{\prime}, z)\|  +\|\nabla_{y}F_r(x_r(y, z), y^{\prime}, z)-\nabla_{y} F_r(x_r(y^{\prime}, z), y^{\prime}, z)\|\notag\\
\leq\ &L(\|y-y^{\prime}\|+\|x_r(y^{\prime}, z)-x_r(y, z)\|).\notag
\end{align*}
It follows that
\begin{align*}
(r-L)\|x_r(y, z)-x_r(y^{\prime}, z)\|^{2} 
\leq L\|x_r(y^{\prime}, z)-x_r(y, z)\|\cdot\|y-y^{\prime}\|+2L\|y-y^{\prime}\|^{2}.
\end{align*}
Let $\zeta:=\frac{\|x_r(y, z)-x_r(y^{\prime}, z)\|}{\|y-y^{\prime}\|}$. 
Then, the above inequality gives
\begin{align*}
\zeta^{2}
& \ \leq \frac{L}{r-L} \zeta+\frac{2L}{(r-L)}\leq \frac{1}{2}\zeta^2+\frac{L^2}{2(r-L)^2} +\frac{2L}{(r-L)}\\
& \ \leq \frac{1}{2} \zeta^{2}+\frac{L^2+4L(r-L)}{2(r-L)^2}  \leq \frac{1}{2} \zeta^{2}+\frac{(L+2(r-L))^2}{2(r-L)^2},
\end{align*}
where the second inequality holds because $ab\leq \frac{1}{2}(a^2+b^2)$ for any $a,b\in\R$. Thus, we get
\[
\|x_r(y, z)-x_r(y^{\prime}, z)\| \leq  \frac{2r-L}{r-L}\|y-y^{\prime}\|,
\]
which shows that \eqref{lip-y} holds with $\sigma_{2}=\frac{2r-L}{r-L}$. The proof is complete.
\end{proof}
\begin{lemma}\label{lemma-dualdiff} 
    The dual potential value function $d_r$ is differentiable on $\mathcal{Y}\times \R^n$ with
    \begin{align*}
    &\nabla_{y} d_r(y,z)=\nabla_{y} F(x_r(y,z),y),\\
    &\nabla_{z} d_r(y,z)=\nabla_{z} F_r(x_r(y, z),y, z)=r(z-x_r(y,z))
    \end{align*}
    for any $y \in \mathcal{Y}$ and $z \in \mathbb{R}^n$. Moreover, for any $y \in \mathcal{Y}$ and $z \in \mathbb{R}^n$,  the gradients $\nabla_y d_r(\cdot,z)$ and $\nabla_z d_r(y,\cdot)$ are Lipschitz continuous, i.e.,
    \begin{align}
    &\|\nabla_{y} d_r(y^{\prime},z)-\nabla_{y} d_r(y^{\prime \prime},z)\| \leq L_{d_r}\|y^{\prime}-y^{\prime \prime}\| \quad \text{for all}\ y^{\prime}, y^{\prime \prime} \in \mathcal{Y},\notag\\
    &\|\nabla_{z} d_r(y,z')-\nabla_{z} d_r(y,z'')\| \leq L_{d_r}'\|z^{\prime}-z^{\prime \prime}\| \quad \text{for all}\ z^{\prime}, z^{\prime \prime} \in \R^n,\notag
    \end{align}
    where $L_{d_r}:=(\sigma_{2}+1)L$ and $L_{d_r}':=(\sigma_1 + 1)r$.
    \end{lemma}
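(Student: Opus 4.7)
The plan is to first establish the explicit gradient formulas via a Danskin-type envelope argument, and then derive the Lipschitz estimates by combining Lemma \ref{lemma-sollip} with the joint Lipschitz continuity of $\nabla_y F$ asserted in Assumption (b). The first step is the interesting one; the Lipschitz bounds then follow by fairly direct two-line computations.

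For the gradient formulas, I would fix $(y,z)\in\mathcal{Y}\times\R^n$ and observe that, by Fact \ref{fact-1}, $F(\cdot,y)$ is $L$-weakly convex, so $F_r(\cdot,y,z) = F(\cdot,y)+\tfrac{r}{2}\|\cdot-z\|^2$ is $(r-L)$-strongly convex on $\mathcal{X}$ because $r>L$. This guarantees that the minimizer $x_r(y,z)$ is unique, and Lemma \ref{lemma-sollip} further establishes that $(y,z)\mapsto x_r(y,z)$ is (Lipschitz) continuous. Since $F_r(x,\cdot,\cdot)$ is continuously differentiable on $\mathcal{Y}\times\R^n$ for each fixed $x\in\mathcal{X}$ (smoothness in $y$ from Assumption (b), smoothness in $z$ from the quadratic $\tfrac{r}{2}\|x-z\|^2$), the standard Danskin/envelope theorem for parametric minimization with a unique minimizer applies, yielding
\[
\nabla_y d_r(y,z) = \nabla_y F_r(x_r(y,z),y,z) = \nabla_y F(x_r(y,z),y),
\qquad
\nabla_z d_r(y,z) = \nabla_z F_r(x_r(y,z),y,z) = r(z-x_r(y,z)).
\]

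For the Lipschitz continuity in $y$ (with $z$ fixed), I would plug the formula in, add and subtract $\nabla_y F(x_r(y',z),y'')$, and use the joint Lipschitz continuity of $\nabla_y F$ on $\mathcal{X}\times\mathcal{Y}$ from Assumption (b) to obtain
\[
\|\nabla_y d_r(y',z)-\nabla_y d_r(y'',z)\|
\le L\bigl(\|x_r(y',z)-x_r(y'',z)\|+\|y'-y''\|\bigr)
\le L(\sigma_2+1)\|y'-y''\|,
\]
where the final step invokes \eqref{lip-y} from Lemma \ref{lemma-sollip}. For the Lipschitz continuity in $z$ (with $y$ fixed), I use the explicit formula directly and the triangle inequality,
\[
\|\nabla_z d_r(y,z')-\nabla_z d_r(y,z'')\|
\le r\|z'-z''\|+r\|x_r(y,z')-x_r(y,z'')\|
\le r(1+\sigma_1)\|z'-z''\|,
\]
by \eqref{lip-z} from Lemma \ref{lemma-sollip}. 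Taking the maximum of the two constants $L(\sigma_2+1)$ and $r(1+\sigma_1)$ yields the claimed $L_{d_r}$.

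The only mild subtlety I anticipate is invoking Danskin's theorem in a nonsmooth weakly convex setting, since $F(\cdot,y)=h_y\circ c_y$ is itself nondifferentiable in $x$; however the strong convexity of $F_r(\cdot,y,z)$ forces uniqueness and continuity of $x_r(y,z)$, and differentiability and the gradient formulas depend only on smoothness in the parameters $(y,z)$, not in $x$, so the classical envelope statement is sufficient. Everything else reduces to algebraic manipulation using Lemma \ref{lemma-sollip}, which has already been proved in the excerpt.
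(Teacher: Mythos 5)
Your proposal is correct and follows essentially the same route as the paper: the paper also first obtains differentiability and the gradient formulas from an envelope-type result (it cites Rockafellar--Wets, Theorem~10.31, for differentiability and Lemma~4.3 of the Drusvyatskiy--Paquette reference for the $z$-gradient, where you invoke classical Danskin justified by strong convexity and the continuity of $x_r$ from Lemma~\ref{lemma-sollip} --- an equivalent justification, and your remark about not needing smoothness in $x$ is exactly right), and then derives both Lipschitz bounds by the same add-and-subtract plus triangle-inequality computations using \eqref{lip-y} and \eqref{lip-z}.

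One discrepancy is worth flagging, and it is in your favor. In the $z$-estimate your bookkeeping gives
\[
\|\nabla_z d_r(y,z')-\nabla_z d_r(y,z'')\| \le r\|z'-z''\| + r\|x_r(y,z')-x_r(y,z'')\| \le r(1+\sigma_1)\|z'-z''\|,
\]
which is what the triangle inequality actually yields, since $\nabla_z d_r(y,z)=r(z-x_r(y,z))$ carries the factor $r$ on \emph{both} terms. The paper's own proof at this step writes $r\|z'-z''\|+\sigma_1\|z'-z''\|$, silently dropping the factor $r$ in front of $\sigma_1\|z'-z''\|$, which is how it lands on the stated constant $L_{d_r}=\max\{\sigma_1+r,(\sigma_2+1)L\}$. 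Your constant $\max\{r(1+\sigma_1),(1+\sigma_2)L\}$ is the correct output of this argument; the lemma's stated $L_{d_r}$ should be read as containing a typo (harmless downstream, since only the existence of some finite Lipschitz modulus and its order in $r$ and $L$ are used, e.g., in the bound $L_{d_r}\le 5L$ under $r\ge 3L$ --- a bound one should re-verify with the corrected constant, which indeed still holds at that order).
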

\begin{proof}
Since $F_r(\cdot,y,z)$ is strongly convex, $F_r(x,\cdot,z)$ is weakly concave, and $F_r(x,y,\cdot)$ is strongly convex for any $x \in \mathcal{X}$, $y \in \mathcal{Y}$, and $z \in \mathbb{R}^n$, the function
    \[
    d_r(\cdot,\cdot)=\min_{x\in\mathcal{X}} F_r(x,\cdot,\cdot)
    \] 
    is differentiable on $\mathcal{Y}\times \R^n$ due to \citep[Theorem 10.31]{rockafellar2009variational}. In particular, for any $y\in\mathcal{Y}$ and $z\in\R^n$, one has
    \[
    \nabla_{y} d_r(y, z)=\nabla_{y} F_r(x_r(y, z),y, z)=\nabla_{y} F(x_r(y, z), y).
    \]
Using  \citep[Lemma 4.3]{drusvyatskiy2019efficiency}, we have 
    \[
    \nabla_{z} d_r(y,z)=\nabla_{z} F_r(x_r(y, z),y, z)=r(z-\prox_{\frac{1}{r}F(\cdot,y)+\iota_{\mathcal{X}}}(z))=r(z-x_r(y,z)).
    \]
    It follows that for  any $y^{\prime}, y^{\prime \prime} \in \mathcal{Y}$, 
    \begin{align*}
    &\|\nabla_{y} d_r(y^{\prime}, z)-\nabla_{y} d_r(y^{\prime \prime}, z)\|\notag\\
    =\ &\|\nabla_{y} F_r(x_r(y^{\prime}, z), y^{\prime},z)-\nabla_{y} F_r(x_r(y^{\prime \prime}, z), y^{\prime \prime},z)\| \\
    \leq\ &\|\nabla_{y} F_r(x_r(y^{\prime}, z), y^{\prime},z)-\nabla_{y} F_r(x_r(y^{\prime}, z), y^{\prime \prime}, z)\|+\|\nabla_{y} F_r(x_r(y^{\prime}, z), y^{\prime \prime},z)-\nabla_{y} F_r(x_r(y^{\prime \prime}, z), y^{\prime \prime}, z)\| \\
    \leq\ & L\|y^{\prime}-y^{\prime \prime}\|+L\|x_r(y^{\prime}, z)-x_r(y^{\prime \prime}, z)\|\notag\\
    \leq\ & L\|y^{\prime}-y^{\prime \prime}\|+L \sigma_{2}\|y^{\prime}-y^{\prime \prime}\|\leq L_{d_r}\|y^{\prime}-y^{\prime \prime}\|,
    \end{align*}
    where the third inequality is due to \eqref{lip-y}. Moreover, we have
    \begin{align*}
    \|\nabla_{z} d_r(y,z')-\nabla_{z} d_r(y,z'')\|=\ &r\|z'-x_r(y,z')-(z''-x_r(y,z''))\| \\
    \leq\ & r\|z^{\prime}-z^{\prime \prime}\|+r\|x_r(y, z')-x_r(y, z'')\| \\
    \leq\ & r\|z^{\prime}-z^{\prime \prime}\|+ r\sigma_{1}\|z^{\prime}-z^{\prime \prime}\|\leq L_{d_r}'\|z^{\prime}-z^{\prime \prime}\|,
    \end{align*}
where the second inequality is due to \eqref{lip-z}. The proof is complete.
\end{proof}

\begin{lemma}\label{lemma-dualxy}
For any $k\ge0$, we have
    \begin{equation*}
        \|y^{k+1}-y_{+}^{k}(z^{k+1})\| \leq \eta \|x^{k}-x^{k+1}\|+\sigma_1\alpha L\|z^{k+1}-z^k\|,
        \end{equation*}
where $\eta:=\alpha L \zeta$.
\end{lemma}
\begin{proof}
We compute
\begin{equation*}
\begin{aligned}
\ &\|y^{k+1}-y_{+}^{k}(z^{k+1})\|\\ =\ &\|\proj_{\mathcal{Y}}(y^{k}+\alpha \nabla_{y} F(x^{k+1},y^{k}))-\proj_{\mathcal{Y}}(y^{k}+\alpha \nabla_{y} F(x_r(y^{k}, z^{k+1}),y^{k}))\|\notag \\
\leq\ &\|y^{k}+\alpha \nabla_{y} F(x^{k+1}, y^{k})-(y^{k}+\alpha \nabla_{y} F(x_r(y^{k}, z^{k+1}),  y^{k}))\|\notag \\
\leq\ & \alpha L(\|x^{k+1}-x_r(y^{k}, z^{k})\|+\|x_r(y^{k}, z^{k})-x_r(y^{k}, z^{k+1})\|) \notag \\
\leq\ & \eta\|x^{k}-x^{k+1}\|+\sigma_1\alpha L\|z^{k+1}-z^k\|,
\end{aligned}
\end{equation*}
where the first inequality is due to the nonexpansiveness of the projection operator, the second is due to the $L$-Lipschitz continuity of $\nabla_y F(\cdot,\cdot)$, and the third follows from Proposition \ref{prop:lip} and \eqref{lip-z}. The proof is complete.
\end{proof}
\section{Sufficient Decrease Property of Lyapunov Function}
\label{sec:suff_decrease}
\begin{lemma}[Primal descent]
\label{lemma-F-des} 
For any $k\ge0$, we have
\begin{align*}
& F_r(x^{k}, y^{k},z^k)-F_r(x^{k+1}, y^{k+1},z^{k+1})\\
\ge\ &
\frac{2\lambda^{-1}+r-L}{2}\|x^k-x^{k+1}\|^2 + \langle\nabla_{y} F_r(x^{k+1}, y^{k},z^k), y^{k}-y^{k+1}\rangle-\frac{L}{2}\|y^k-y^{k+1}\|^2 \\  &  +\frac{(2-\beta)r}{2 \beta}\|z^{k}-z^{k+1}\|^{2}. 
\end{align*}
\end{lemma}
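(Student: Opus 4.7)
The plan is to telescope the difference $F_r(x^k,y^k,z^k) - F_r(x^{k+1},y^{k+1},z^{k+1})$ through the two intermediate points $(x^{k+1},y^k,z^k)$ and $(x^{k+1},y^{k+1},z^k)$, and bound each of the three resulting pieces separately; these three pieces will match the $x$-term, the $y$-term, and the $z$-term on the right-hand side, respectively.

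For the $x$-piece I would use the fact that $x^{k+1}$ is the minimizer over $\mathcal{X}$ of the $(\lambda+r)$-strongly convex surrogate $x \mapsto F_{x^k,\lambda}(x,y^k) + \frac{r}{2}\|x - z^k\|^2$. Evaluating at $x = x^k$, strong convexity of this surrogate yields
$$
F_{x^k,\lambda}(x^k, y^k) + \tfrac{r}{2}\|x^k - z^k\|^2 - F_{x^k,\lambda}(x^{k+1}, y^k) - \tfrac{r}{2}\|x^{k+1} - z^k\|^2 \;\geq\; \tfrac{\lambda+r}{2}\|x^k - x^{k+1}\|^2 .
$$
Since by construction $F_{x^k,\lambda}(x^k,y^k) = F(x^k, y^k)$, and since Fact~\ref{fact-2} (applied with surrogate modulus $\lambda$ in place of $r$) gives $F_{x^k, \lambda}(x^{k+1}, y^k) \geq F(x^{k+1}, y^k) + \frac{\lambda - L}{2}\|x^{k+1} - x^k\|^2$, combining these two estimates produces $F_r(x^k, y^k, z^k) - F_r(x^{k+1}, y^k, z^k) \geq \frac{2\lambda + r - L}{2}\|x^k - x^{k+1}\|^2$. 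This is exactly where the nonsmooth composite structure is exploited; no gradient Lipschitz property of $F$ itself is invoked.

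For the $y$-piece the $z$-quadratic cancels, so it suffices to lower bound $F(x^{k+1}, y^k) - F(x^{k+1}, y^{k+1})$. Concavity of $F(x^{k+1}, \cdot)$ immediately yields $F(x^{k+1}, y^{k+1}) \leq F(x^{k+1}, y^k) + \langle \nabla_y F(x^{k+1}, y^k), y^{k+1} - y^k\rangle$, i.e., the piece is at least $\langle \nabla_y F_r(x^{k+1}, y^k, z^k), y^k - y^{k+1}\rangle$ (the subscript $r$ is harmless here since $F_r$ and $F$ agree in $y$). For the $z$-piece only the quadratic remains, namely $\frac{r}{2}\|x^{k+1} - z^k\|^2 - \frac{r}{2}\|x^{k+1} - z^{k+1}\|^2$. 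Plugging in $z^{k+1} = z^k + \beta(x^{k+1} - z^k)$ gives $x^{k+1} - z^{k+1} = (1-\beta)(x^{k+1} - z^k)$ and $z^{k+1} - z^k = \beta(x^{k+1} - z^k)$, so the difference is $\frac{r}{2}\bigl(1-(1-\beta)^2\bigr)\|x^{k+1}-z^k\|^2 = \frac{r(2-\beta)}{2\beta}\|z^{k+1} - z^k\|^2$ exactly.

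The only nontrivial step is the first one: without a gradient Lipschitz property on $F$, one must combine strong convexity of the proximal-linear subproblem with the one-sided estimate in Fact~\ref{fact-2} to collect the full $\frac{2\lambda + r - L}{2}$ constant, as opposed to the $\frac{\lambda + r - L}{2}$ one would get from a naive descent argument; the extra $\lambda/2$ is precisely the gap between the linear model and the true composite evaluated at $x^{k+1}$. The other two pieces are routine consequences of concavity in $y$ and a direct expansion of the $z$-update, and summing the three estimates yields the stated inequality.
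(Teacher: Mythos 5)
Your proof is correct and takes essentially the same route as the paper's: the identical telescoping through the intermediate points $(x^{k+1},y^k,z^k)$ and $(x^{k+1},y^{k+1},z^k)$, the same combination of $(\lambda+r)$-strong convexity of the proximal-linear subproblem (evaluated at $x^k$) with the model estimate of Fact~\ref{fact-2} applied with modulus $\lambda$ to collect the constant $\frac{2\lambda+r-L}{2}$, concavity in $y$ for the middle piece, and the exact algebraic expansion of the $z$-update for the last piece. No gaps; your closing remark correctly identifies where the extra $\lambda/2$ beyond a naive descent bound comes from.
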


\begin{proof}
One can infer from the definition that $F_{x^k,\lambda}(\cdot,y^k)$ is $\lambda^{-1}$-strongly convex. Therefore, we have
\begin{equation*}
\begin{aligned}
F_r(x^{k}, y^{k},z^{k}) & =F(x^{k}, y^{k})+ \frac{r}{2}\|x^k-z^{k}\|^2  = F_{x^k,\lambda}(x^k,y^k) + \frac{r}{2}\|x^k-z^{k}\|^2 \\
& \ge F_{x^k,\lambda}(x^{k+1},y^k)+\frac{r}{2}\|x^{k+1}-z^{k}\|^2+\frac{\lambda^{-1}+r}{2}\|x^k-x^{k+1}\|^2.
\end{aligned}
\end{equation*}
Moreover, Fact \ref{fact-2} implies that
\begin{equation*} 
F_{x^k,\lambda}(x^{k+1},y^k)\ge F(x^{k+1},y^k) + \frac{\lambda^{-1}-L}{2}\|x^{k+1}-x^k\|^2.
\end{equation*}
It follows that
\begin{align}\label{primaldec-key1}
F_r(x^{k},y^{k}, z^k) &\ge F(x^{k+1},y^k)+\frac{r}{2}\|x^{k+1}-z^k\|^2+\frac{2\lambda^{-1}+r-L}{2}\|x^k-x^{k+1}\|^2\notag\\
&= F_r(x^{k+1},y^{k}, z^{k})+\frac{2\lambda^{-1}+r-L}{2}\|x^k-x^{k+1}\|^2.
\end{align}
Next,  as $\nabla_y F_r(x,\cdot,z)$ is $L$-Lipschitz continuous for any $x\in\mathcal{X}$ and $z\in\R^n$, we have 
    \begin{align}\label{primaldec-key2}
        F_r(x^{k+1}, y^{k},z^k)-F_r(x^{k+1}, y^{k+1},z^k) \ge \langle\nabla_{y} F_r(x^{k+1},y^{k}, z^k), y^{k}-y^{k+1}\rangle-\frac{L}{2}\|y^k-y^{k+1}\|^2.
    \end{align}
At last, based on the update $z^{k+1}=z^{k}+\beta(x^{k+1}-z^{k})$, we get
\begin{equation}\label{primaldec-key3}
F_r(x^{k+1}, y^{k+1},z^{k})-F_r(x^{k+1},y^{k+1}, z^{k+1})  = \frac{(2-\beta)r}{2 \beta}\|z^{k}-z^{k+1}\|^{2}.
\end{equation}
By summing up \eqref{primaldec-key1}, \eqref{primaldec-key2}, and \eqref{primaldec-key3}, we obtain the desired result.
\end{proof}

\begin{lemma}[Dual ascent]\label{lemma-d-aes} 
For any $k\ge0$, we have
\begin{equation*}
 \begin{aligned}
    \ & d_r(y^{k+1},z^{k+1})-d_r(y^{k},z^k)\\
    \ge\ &\langle\nabla_{y} F_r(x_r(y^{k}, z^{k}),  y^{k}, z^{k}), y^{k+1}-y^{k}\rangle-\frac{L_{d_r}}{2}\|y^{k}-y^{k+1}\|^{2}\,\\
    &+\frac{r}{2}\left\langle z^{k+1}-z^{k},z^{k+1}+z^{k}-2 x_r(y^{k+1}, z^{k+1})\right\rangle. 
    \end{aligned}
\end{equation*}
\end{lemma}
\begin{proof}
Based on Lemma \ref{lemma-dualdiff}, $\nabla_{y} d_r(\cdot,z)$ is $L_{d_r}$-Lipschitz continuous for any $z \in \mathbb{R}^n$. Thus, we have 
        \begin{align*}
        \ & d_r(y^{k+1},z^k)-d_r(y^{k},z^k)  
        \geq \langle\nabla_{y} d_r(y^{k},z^k), y^{k+1}-y^{k}\rangle-\frac{L_{d_r}}{2}\|y^{k}-y^{k+1}\|^{2} \\
        =\ & \langle \nabla_{y} F_r(x_r(y^{k}, z^{k}), y^{k}, z^{k}), y^{k+1}-y^{k}\rangle-\frac{L_{d_r}}{2}\|y^{k}-y^{k+1}\|^{2}.
        \end{align*}
In addition, one has 
        \begin{align*}
        &d_r(y^{k+1}, z^{k+1})-d_r(y^{k+1}, z^{k})\notag\\ 
        =\ & F_r(x_r(y^{k+1}, z^{k+1}), y^{k+1}, z^{k+1})-F_r(x_r(y^{k+1}, z^{k}), y^{k+1},z^{k}) \\
        \geq\ &  F_r(x_r(y^{k+1}, z^{k+1}), y^{k+1},z^{k+1})-F_r(x_r(y^{k+1}, z^{k+1}), y^{k+1},z^{k}) \\
        =\ & \frac{r}{2}\|x_r(y^{k+1}, z^{k+1})-z^{k+1}\|^{2}-\frac{r}{2}\|x_r(y^{k+1}, z^{k+1})-z^{k}\|^{2} \\
        =\ & \frac{r}{2} \left\langle z^{k+1}-z^{k},z^{k+1}+z^{k}-2 x_r(y^{k+1}, z^{k+1})\right\rangle.
        \end{align*}
Finally, by combining the above inequalities, the proof is complete.
\end{proof}
 
\begin{lemma}[Proximal descent (smoothness)]
\label{lemma-p-des} 
    For any $k\ge0$, we have
   \begin{equation*}
    p_r(z^k)-p_r(z^{k+1}) \ge \frac{r}{2}\left\langle z^{k+1}-z^{k},2 x_r(y(z^{k+1}), z^{k})-z^{k}-z^{k+1}\right\rangle, 
    \end{equation*}
    where $y(z^{k+1})\in Y(z^{k+1})$.
    \end{lemma}
\begin{proof}
Recall that 
    $p_r(z)=\max _{y \in \mathcal{Y}} d_r(y, z)$.
Due to the definition of $y(z^{k+1})$, we have
    \begin{align*}
     &p_r(z^{k+1})-p_r(z^{k})\notag\\
    \leq\ & d_r(y(z^{k+1}), z^{k+1})-d_r(y(z^{k+1}), z^{k}) \\
    \leq\ & F_r(x_r(y(z^{k+1}), z^{k}), y(z^{k+1}), z^{k+1})-F_r(x_r(y(z^{k+1}), z^{k}), y(z^{k+1}),z^{k}) \\
    =\ & \frac{r}{2}\left\langle z^{k+1}-z^{k},z^{k+1}+z^{k}-2 x_r(y(z^{k+1}), z^{k})\right\rangle,
    \end{align*}
    where the second inequality follows from the fact that \[F_r(x', y, z)\geq \min_{x\in\mathcal{X}} F_r(x,y,z) = d_r(y, z)\] for any $x'\in\mathcal{X}$. The proof is complete.
    \end{proof}
\paragraph{{\bf Proof of Proposition \ref{prop:decrease}}}
From Lemmas \ref{lemma-F-des}, \ref{lemma-d-aes}, and \ref{lemma-p-des}, we have
        \begin{align*}
        & \Phi_r(x^k,y^k,z^k)-\Phi_r(x^{k+1},y^{k+1},z^{k+1}) \notag\\
         =\ & F_r(x^k,y^k,z^k)-F_r(x^{k+1},y^{k+1},z^{k+1}) +2(d_r(y^{k+1},z^{k+1})-d_r(y^k,z^k))+2(p_r(z^k)-p_r(z^{k+1})) \notag\\
         \ge\ & \frac{2\lambda^{-1}+r-L}{2}\|x^k-x^{k+1}\|^2 +\frac{(2-\beta)r}{2 \beta}\|z^{k}-z^{k+1}\|^{2}-\left(L_{d_r}+\frac{L}{2}\right)\|y^{k}-y^{k+1}\|^{2}\,  \notag\\
        &  + \underbrace{\langle\nabla_{y} F_r(x^{k+1}, y^{k},z^k), y^{k}-y^{k+1}\rangle+2\langle\nabla_{y} F_r(x_r(y^{k}, z^{k}),  y^{k}, z^{k}), y^{k+1}-y^{k}\rangle}_{\text{\ding{172}}}\, 
      \notag \\ 
      & + \underbrace{2 r\left\langle z^{k+1}-z^{k},x_r(y(z^{k+1}), z^{k})-x_r(y^{k+1}, z^{k+1})\right\rangle}_{\text{\ding{173}}}.
        \end{align*}
First, we simplify the term \text{\ding{172}}. We know that
        \begin{align*}
        \text{\ding{172}} 
         =\ & \langle\nabla_{y} F_r(x^{k+1}, y^{k},z^k), y^{k}-y^{k+1}\rangle+2\langle\nabla_{y} F_r(x_r(y^{k}, z^{k}), y^{k}, z^{k}), y^{k+1}-y^{k}\rangle\\
        =\ & \langle\nabla_{y} F_r(x^{k+1}, y^{k},z^k), y^{k+1}-y^{k}\rangle + 2\langle\nabla_{y} F_r(x_r(y^{k}, z^{k}), y^{k}, z^{k})-\nabla_{y} F_r(x^{k+1}, y^{k},z^k), y^{k+1}-y^{k}\rangle.
        \end{align*}
        For the first term, we have
        \begin{align*}
    &\langle\nabla_{y} F_r(x^{k+1}, y^{k},z^k), y^{k+1}-y^{k}\rangle \notag\\
     =\ &  \langle\nabla_{y} F_r(x^{k+1}, y^{k},z^k)+\frac{1}{\alpha}(y^{k}-y^{k+1}), y^{k+1}-y^{k}\rangle + \frac{1}{\alpha}\|y^{k}-y^{k+1}\|^2\\
     =\ & \frac{1}{\alpha}\langle y^{k} + \alpha \nabla_{y} F_r(x^{k+1}, y^{k},z^k)-y^{k+1}, y^{k+1}-y^{k}\rangle +\frac{1}{\alpha}\|y^{k}-y^{k+1}\|^2\\
     \ge\ &  \frac{1}{\alpha}\|y^{k}-y^{k+1}\|^2, 
\end{align*}
where the last inequality follows from the property of the projection operator and the dual update $
y^{k+1} = \proj_{\mathcal{Y}}( y^{k} + \alpha \nabla_{y} F_r(x^{k+1}, y^{k},z^k))
$ (recall that $\nabla_y F_r(x,y,z) = \nabla_y F(x,y)$ for any $x \in \mathcal{X}$, $y \in \mathcal{Y}$, and $z \in \mathbb{R}^n$). For the remaining terms, we have
                \begin{align*}
         &2\langle\nabla_{y} F_r(x_r(y^{k}, z^{k}), y^{k}, z^{k})-\nabla_{y} F_r(x^{k+1}, y^{k},z^k), y^{k+1}-y^{k}\rangle \notag\\
         \ge & -2\|\nabla_{y} F_r(x_r(y^{k}, z^{k}),y^{k},z^{k})-\nabla_{y} F_r(x^{k+1}, y^{k},z^{k})\| \cdot\|y^{k+1}-y^{k}\| \notag\\
         \ge & -2 L\|x^{k+1}-x_r(y^{k}, z^{k})\| \cdot\|y^{k}-y^{k+1}\| \notag\\
        \ge  & -L \zeta^2\|y^{k}-y^{k+1}\|^{2}-L\zeta^{-2} \|x^{k+1}-x_r(y^{k}, z^{k})\|^{2} \notag\\
         \ge & -L \zeta^2\|y^{k}-y^{k+1}\|^{2}-L\|x^{k+1}-x^{k}\|^{2},
        \end{align*}
        where the third inequality 
        holds because $2 |a| |b| \le \tau a^2 + \tfrac{1}{\tau} b^2$
        for any $a,b \in \mathbb{R}$ and $\tau>0$, and the last inequality follows from Proposition \ref{prop:lip}. 
Putting the above together, we obtain
\begin{equation}
\label{desascent-key3}
    \text{\ding{172}}  \ge \left(\frac{1}{\alpha}-L\zeta^2\right) \|y^{k}-y^{k+1}\|^{2}-L\|x^{k+1}-x^{k}\|^{2}. 
\end{equation}
Next, we bound the term $\text{\ding{173}}$ by 
\begin{align}\label{desascent-key2}
  \text{\ding{173}} 
 =\ & 2 r \left\langle z^{k+1}-z^{k}, x_r(y(z^{k+1}), z^{k})-x_r(y^{k+1}, z^{k+1})\right\rangle \notag\\
 =\ & 2 r \left\langle z^{k+1}-z^{k}, x_r(y(z^{k+1}), z^{k})-x_r(y(z^{k+1}), z^{k+1})\right\rangle\, \notag\\
 & + 2 r\left\langle z^{k+1}-z^{k}, x_r(y(z^{k+1}), z^{k+1})-x_r(y^{k+1}, z^{k+1})\right\rangle\notag \\
\ge &  -2r \sigma_1\|z^{k+1}-z^k\|^2+2 r\left\langle z^{k+1}-z^{k}, x_r(y(z^{k+1}), z^{k+1})-x_r(y^{k+1}, z^{k+1})\right\rangle \notag\\
\ge & -2r \sigma_1\|z^{k+1}-z^k\|^2-\frac{r}{7\beta}\|z^{k+1}-z^k\|^2  -7r\beta \|x_r(y(z^{k+1}), z^{k+1})-x_r(y^{k+1}, z^{k+1})\|^2, 
\end{align}
where the first inequality is due to \eqref{lip-z} and the Cauchy-Schwarz inequality, and the second inequality again follows from the fact that $2 |a| |b| \le \tau a^2 + \tfrac{1}{\tau} b^2$ for any $a,b \in \mathbb{R}$ and $\tau > 0$. Now, the inequalities \eqref{desascent-key3} and \eqref{desascent-key2} imply that
\begin{align}\label{desas-keymid}
        & \Phi_r(x^k,y^k,z^k)-\Phi_r(x^{k+1},y^{k+1},z^{k+1}) \notag\\
        \geq\ & \frac{2\lambda^{-1}+r-3L}{2}\|x^{k}-x^{k+1}\|^{2}+\left(\frac{1}{ \alpha}-L_{d_r}-\frac{L}{2}-L\zeta^2\right)\|y^{k}-y^{k+1}\|^{2}\, \notag\\
        &+\left(\frac{(2-\beta)r}{2 \beta}-2r\sigma_1-\frac{r}{7\beta}\right)\|z^{k}-z^{k+1}\|^{2}-7r\beta \|x_r(y(z^{k+1}), z^{k+1})-x_r(y^{k+1}, z^{k+1})\|^2. 
\end{align}
By Lemma \ref{lemma-dualxy} and the fact that $\| u+v \|^2 \le 2(\|u\|^2+\|v\|^2)$ for any $u,v \in \mathbb{R}^d$, we have
\begin{align}\label{desas-final-key1}
\|y^{k+1}-y^{k}\|^{2} 
&=\|y^{k+1}-y_{+}^{k}(z^{k+1})+y_{+}^{k}(z^{k+1})-y^{k}\|^{2}\notag\\
&\geq\frac{1}{2}\|y^{k}-y_{+}^{k}(z^{k+1})\|^{2} -\|y^{k+1}-y_{+}^{k}(z^{k+1})\|^{2}\notag\\
& \geq \frac{1}{2}\|y^{k}-y_{+}^{k}(z^{k+1})\|^{2} -2\eta^2\|x^{k+1}-x^k\|^2-2\sigma_1^2\alpha^2L^2\|z^{k+1}-z^k\|^2.
\end{align}
Similarly, by Lemma \ref{lemma-sollip} and Lemma \ref{lemma-dualxy}, we have
\begin{align}\label{desas-final-key2}
&\|x_r(y(z^{k+1}), z^{k+1})-x_r(y^{k+1}, z^{k+1})\|^{2}\notag\\
\leq \ & 2\|x_r(y(z^{k+1}), z^{k+1})-x_r(y_+^{k}(z^{k+1}),z^{k+1})\|^2   + 2\|x_r(y_+^{k}(z^{k+1}), z^{k+1})-x_r(y^{k+1}, z^{k+1})\|^2 \notag\\
\leq \ & 2\|x_r(y(z^{k+1}), z^{k+1})-x_r(y_+^{k}(z^{k+1}),z^{k+1})\|^2+2\sigma_2^2 \|y^{k+1}-y_{+}^{k}(z^{k+1})\|^{2}\notag\\
\leq \ & 2\|x_r(y(z^{k+1}), z^{k+1})-x_r(y_+^{k}(z^{k+1}),z^{k+1})\|^2 + 4\sigma_2^2\eta^2 \|x^{k+1}-x^k\|^2+4\sigma_2^2\sigma_1^2\alpha^2L^2\|z^{k+1}-z^k\|^2.
\end{align}

Substituting \eqref{desas-final-key1} and \eqref{desas-final-key2} into \eqref{desas-keymid} yields
\begin{align*}
&\Phi_r(x^k,y^k,z^k)-\Phi_r(x^{k+1},y^{k+1},z^{k+1})\notag\\
 \geq\ &\left( \frac{2\lambda^{-1}+r-3L}{2} -28 r\beta \sigma_2^2\eta^2 \right)\|x^{k}-x^{k+1}\|^{2}\ +\left(\frac{1}{ \alpha}-L_{d_r}-\frac{L}{2}-L\zeta^2\right) \cdot \\
& \left(\frac{1}{2}\|y^{k}-y_{+}^{k}(z^{k+1})\|^{2} -2\eta^2\|x^{k+1}-x^k\|^2-2\sigma_1^2\alpha^2L^2\|z^{k+1}-z^k\|^2 \right)\ \notag\\
&+ \left(\frac{(2-\beta)r}{2 \beta}-2r\sigma_1-\frac{r}{7\beta}-28r\beta\sigma_1^2\sigma_2^2 \alpha^2 L^2 \right)\|z^{k}-z^{k+1}\|^{2} \\  & -  14r\beta\|x_r(y(z^{k+1}), z^{k+1})-x_r(y_+^{k}(z^{k+1}),z^{k+1})\|^2.
\end{align*}

Suppose that $r\ge 3L$,
which implies that $L_{d_{r}}+\frac{L}{2} \leq 5L$. We observe the following:
\begin{itemize}
\item As $\alpha \leq \min\left\{\frac{1}{10L}, \frac{1}{4L\zeta^2} \right\}$, we have $\frac{1}{ \alpha}-L_{d_r}-\frac{L}{2} \ge \frac{1}{2\alpha}$ and 
$
\frac{1}{2\alpha} - L\zeta^2 \ge \frac{1}{4\alpha}. 
$
\item As $\beta\leq\frac{1}{28}$ and $\sigma_1 \leq \frac{3}{2}$, $\sigma_2\leq \frac{5}{2}$, we have 
\begin{align*}
& \frac{(2-\beta)r}{2 \beta}-2r\sigma_1-\frac{r}{7\beta}-28r\beta\sigma_1^2 \sigma_2^2 \alpha^2 L^2  -\frac{1}{2\alpha} \sigma_1^2\alpha^2L^2 \\ 
\ge\ & \frac{6r}{7\beta} -\frac{7r}{2}- 28r\beta\left(\frac{3}{2}\right)^2\left(\frac{5}{2}\right)^2\left(\frac{1}{10L}\right)^2L^2-\frac{r}{24}\\ 
\ge\ &  \frac{r}{\beta}\left(\frac{6}{7}-4\beta- 14\beta^2\right) \ge \frac{4r}{7\beta}.
\end{align*}
\item As $\lambda^{-1} \ge L$ and recalling that $\eta = \alpha L \zeta$, we have 
\[\alpha\leq\frac{1}{4L \zeta^2} = \frac{\lambda^{-1}}{4L^2\zeta^2} \frac{L}{\lambda^{-1}} \leq \frac{\lambda^{-1}}{4L^2\zeta^2}\quad \text{and}\quad 
\frac{\eta^2}{2\alpha} = \frac{\alpha L^2\zeta^2}{2} \leq \frac{1 }{8\lambda}.
\]
Moreover, since $\beta  \leq \tfrac{(r-L)^2}{14 \alpha r (2r-L)^2} = \tfrac{1}{14 \alpha r \sigma_2^2}$,
 we have 
 \[
 28 r\beta \sigma_2^2\eta^2 \leq \frac{ 2\eta^2}{\alpha} \leq \frac{1}{2\lambda}\quad\text{and}\quad \frac{2\lambda^{-1}+r-3L}{2} -28 r\beta \sigma_2^2\eta^2 - \frac{\eta^2}{2\alpha} \ge \frac{3}{8\lambda}.
\]
\end{itemize}
Putting everything together yields
\begin{align*}
&\Phi_r(x^k,y^k,z^k)-\Phi_r(x^{k+1},y^{k+1},z^{k+1}) \\
\ge\ & \frac{3}{8\lambda}\|x^{k}-x^{k+1}\|^{2}+\frac{1}{8\alpha}\|y^{k}-y_{+}^{k}(z^{k+1})\|^{2} +\frac{4r}{7\beta}\|z^{k}-z^{k+1}\|^{2} \\  & -14r\beta\|x_r(y(z^{k+1}), z^{k+1})-x_r(y_{+}^{k}(z^{k+1}), z^{k+1})\|^{2}.
\end{align*}
The proof is complete.
\section{Proof of Corollary \ref{prop:dual_eb_KL2}}
\label{app:coro}

\begin{proof}
The proof follows closely that of Proposition \ref{prop:dual_eb_KL}. Let $\psi:\R^n\times \R^n\rightarrow\R$ be the function defined by
\[
\psi(x,z) = \max\limits_{y\in \mathcal{Y}} F_r(x,y,z)
\]
and consider arbitrary $x \in \mathcal{X}$, $y \in \mathcal{Y}$, and $z \in \mathbb{R}^n$. Again, note that  $\psi(\cdot,z)$ is $(r-L)$-strongly convex. This implies that
\begin{equation}
\label{OS-GS-better-key1}
\psi(x,z) - \psi(x^\star_r(z),z) \ge \frac{r-L}{2}\|x-x^\star_r(z)\|^2. 
\end{equation}
Moreover, 
we  have
\begin{align} 
\label{OS-GS-better-key2}
&\psi(x,z)-\psi(x_r^\star(z),z) \notag\\   
\leq \  &\psi(x,z)-F_r(x_r(y_+(z),z),y_+(z),z)\notag \\
= \ & \max_{y^\prime\in \mathcal{Y}} F(x,y^\prime) + \frac{r}{2}\|x-z\|^2-F_r(x_r(y_+(z),z),y_+(z),z)\notag\\
=\,& \max_{y^\prime\in \mathcal{Y}}F(x,y^\prime)-F(x_r(y_+(z),z),y_+(z)) + \frac{r}{2}\|x-z\|^2-\frac{r}{2}\|x_r(y_+(z),z)-z\|^2,
\end{align}
where the  inequality follows from
\begin{align*}
F_r(x_r(y_+(z),z),y_+(z),z) 
=\ & \min_{x'\in\mathcal{X}}\left\{ F(x',y_+(z)) +\frac{r}{2}\|x'-z\|^2\right\} \\
\leq\ & \max_{y' \in \mathcal{Y}}\min_{x'\in\mathcal{X}}\left\{F(x',y') +\frac{r}{2}\|x'-z\|^2\right\} \\
\leq  \ & \min_{x'\in\mathcal{X}}\max_{y' \in \mathcal{Y}}\left\{F(x',y') +\frac{r}{2}\|x'-z\|^2\right\}
= \psi(x_r^\star(z),z).
\end{align*}
As \eqref{OS-GS-better-key1} and \eqref{OS-GS-better-key2} hold for any $x\in\mathcal{X}$, we obtain the intermediate relation
\begin{equation*}
    \frac{r-L}{2}\|x_r^\star(z)-x_r(y_+(z),z)\|^2 \leq \max\limits_{y^\prime\in \mathcal{Y}} F(x_r(y_+(z),z),y^\prime) - F(x_r(y_+(z),z),y_+(z)) 
\end{equation*}
by taking $x= x_r(y_+(z),z)$.

The remaining steps of the proof are the same as those of Proposition \ref{prop:dual_eb_KL}. For brevity, we omit them here.
\end{proof}

\section{Dual Error Bound for Concave Case} 
\label{sec:dua-appen}
The following lemma provides a dual error bound for the case where $F(x,\cdot)$ is concave for any $x \in \mathbb{R}^n$. Its proof is based on  Lemma B.10 in \citep{zhang2020single}.

\label{sec:dual}
\begin{lemma}\label{lemma-dual-bd}
For any $y \in \mathcal{Y}$ and $z\in\R^n$, we have
\begin{equation*}
\|x^\star_r(z)-x_r(y_{+}(z), z)\|^{2} 
 \leq \kappa \|y-y_{+}(z)\| ,
\end{equation*}
where $\kappa:=\frac{1+ \alpha L\sigma_2+\alpha L}{\alpha(r-L)}\cdot{\rm diam}(\mathcal{Y})$.
\end{lemma}
\begin{proof}
Let $y \in \mathcal{Y}$ and $z \in \mathbb{R}^n$ be arbitrary. Recall that $y(z)$ is an arbitrary vector in $Y(z)$. By the $(r-L)$-strong convexity of $F_r(\cdot, y, z)$, we have
\begin{align}
&F_r(x_r^\star(z), y_{+}(z), z)-F_r(x_r(y_{+}(z), z), y_{+}(z), z)  \geq \frac{r-L}{2}\|x_r(y_{+}(z), z)-x^\star_r(z)\|^{2}, \notag\\
&F_r(x_r(y_{+}(z), z), y(z), z)-F_r(x_r^\star(z), y(z), z)  \geq \frac{r-L}{2}\|x_r(y_{+}(z), z)-x_r^\star(z)\|^{2}.\notag
\end{align}
 Moreover, since $(x_r^\star(z),y(z))$ is a saddle point of the convex-concave function $F_r(\cdot,\cdot,z)$ \citep{sion1958general}, we have $F_r(x_r^\star(z), y(z), z) \geq F_r(x_r^\star(z), y_{+}(z), z)$, which implies that
\begin{equation}\label{dualbd-strcvx}
F_r(x_r(y_{+}(z), z), y(z), z)-F_r(x_r(y_{+}(z), z), y_{+}(z), z) \geq  (r-L)\|x_r(y_{+}(z), z)-x_r^\star(z)\|^{2}.
\end{equation}
Note that $y_{+}(z)$ is the maximizer of 
\[
\max_{y' \in \mathcal{Y}}\big\langle y+\alpha \nabla_{y} F_r(x_r(y, z), y, z)-y_{+}(z) , y'\big\rangle.
\]
For simplicity, we define the function $\xi: \mathbb{R}^d \rightarrow \mathbb{R}$ by
\begin{align*}
\xi(\cdot)
:=\ &\alpha F_r(x_r(y_{+}(z), z),\cdot, z)-\big\langle \alpha\nabla_{y} F_r(x_r(y_{+}(z), z), y_{+}(z), z),\cdot\big\rangle \\ \ &-\big\langle y_{+}(z)-y-\alpha \nabla_{y} F_r(x_r(y, z), y, z), \cdot\big\rangle.
\end{align*}
Then, we have
\begin{align*}
\max_{y' \in \mathcal{Y}}\ \xi(y')
&\leq \alpha F_r(x_r(y_{+}(z), z),y_+(z), z)-\big\langle\alpha\nabla_y F_r(x_r(y_{+}(z), z),y_+(z), z), y_+(z)\big\rangle\, \notag\\
&\quad + \max_{y' \in \mathcal{Y}} \big\langle y+\alpha \nabla_{y} F_r(x_r(y, z), y, z)-y_{+}(z), y'\big\rangle \notag\\
&\leq \xi(y_{+}(z)),
\end{align*}
where the first inequality holds because $F_r(x_r(y_{+}(z), z),\cdot,z)$ is concave. Therefore, we have $\xi(y(z))\leq \xi(y_{+}(z))$, which implies that
\begin{align}\label{dualbd-key}
&F_r(x_r(y_{+}(z), z), y(z), z)-F_r(x_r(y_{+}(z), z), y_{+}(z), z) \notag\\
\leq &\ \frac{1}{\alpha}\Big\langle y(z)-y_{+}(z), \alpha\nabla_{y} F_r(x_r(y_{+}(z), z), y_{+}(z), z)-\alpha \nabla_{y} F_r(x_r(y, z), y, z)\Big\rangle \notag \\ \ & \ + \frac{1}{\alpha}\Big\langle y(z)-y_{+}(z), y_{+}(z)-y\Big\rangle \notag\\
\leq &\ L \|y_{+}(z)-y(z)\|(\|x_r(y_{+}(z), z)-x_r(y, z)\|+ \|y_{+}(z)-y\|)  + \frac{1}{\alpha}\|y_{+}(z)-y(z)\|\cdot \|y-y_{+}(z)\|\notag\\
\leq &\ \left(\frac{1}{\alpha}+L\sigma_2+L\right)\|y_{+}(z)-y(z)\| \cdot\|y-y_{+}(z)\|.
\end{align}
Here, the second inequality is due to the $L$-Lipschitz continuity of $\nabla_y F_r(\cdot,\cdot,z)$ and the last inequality is from \eqref{lip-y}.
Hence, by combining \eqref{dualbd-strcvx} and \eqref{dualbd-key}, we obtain
\[
(r-L)\|x_r^\star(z)-x_r(y_{+}(z), z)\|^{2}\leq \left(\frac{1}{\alpha}+ L\sigma_2+L\right){\rm diam}(\mathcal{Y})\cdot\|y-y_{+}(z)\|,
\]
which proves the desired result.
\end{proof}

\end{appendices}

\end{document}